\DeclareSymbolFont{rsfs}{U}{rsfs}{m}{n}
\DeclareSymbolFontAlphabet{\mathscr}{rsfs}
\def\dd{{\mathfrak d}}
\def\gg{{\mathfrak g}} 
\def\hh{{\mathfrak h}} 
\def\ii{{\mathfrak i}}
\def\rr{{\mathfrak r}} 
\def\tt{{\mathfrak t}}
\def\sl{{\mathfrak sl}}
\def\VV{{\mathfrak V}}
\def\WW{{\mathfrak W}}
\def\Vir{{\VV \ii \rr}}
\def\CCC{{\mathbb C}}
\def\NNN{{\mathbb N}}
\def\SSS{{\mathbb S}}
\def\ZZZ{{\mathbb Z}}
\def\F{{\mathcal F}}
\def\t1{{\frac{1}{3}}}
\def\1/2{{\frac{1}{2}}}
\def\3/2{{\frac{3}{2}}}
\def\Ž{{\' e}}
\def\{{\` e}}
\def\{{\^ e}}
\def\ˆ{{\` a}}
 \newtheorem{lemma}{Lemma}[section]
 \newtheorem{proposition}[lemma]{Proposition}
  \newtheorem{examples}[lemma]{Examples}
  \newtheorem{theorem}[lemma]{Theorem}
\newtheorem{definition}[lemma]{Definition}
 \newtheorem{corollary}[lemma]{Corollary}
 \newtheorem{remark}[lemma]{Remark}
   \newtheorem{reminder}[lemma]{Reminder}  
      \newtheorem{convention}[lemma]{Convention}
\title{Neveu-Schwarz and operators algebras II   \\ \textit{Unitary series and characters}  }
\author{S\'ebastien Palcoux }
 \date{}   
\begin{document}
\maketitle

\begin{abstract}
This paper is the second of a series giving a self-contained way from the Neveu-Schwarz algebra  to a new series of irreducible subfactors.
Here we give a  unitary complete proof of the classification of the unitary series of the Neveu-Schwarz algebra, by the way of GKO construction, Kac determinant and FQS criterion. We then obtain the characters directly, without Feigin-Fuchs resolutions.

\end{abstract}

\tableofcontents   \newpage
\section{Introduction}

\subsection{Background of the series}
In the $90$'s,  V. Jones and A. Wassermann started a program whose goal is to understand the unitary conformal field theory from the point of view of operator algebras (see \cite{2c}, \cite{2b}).  In \cite{2}, Wassermann defines and computes the Connes fusion of the irreducible positive energy representations of the loop group $LSU(n)$ at fixed level $\ell$, using primary fields, and with consequences in the theory of subfactors.  In  \cite{vtl} V. Toledano Laredo proves the Connes fusion rules for $LSpin(2n)$ using similar methods. Now, let Diff$(\SSS^{1})$ be the diffeomorphism group on the circle, its Lie algebra is the Witt algebra $\WW$ generated by $d_{n}$ ($n \in \ZZZ$), with $[d_{m} , d_{n}] = (m-n)d_{m+n}$. It admits a unique central extension called the Virasoro algebra $\Vir$. Its unitary positive energy representation theory and the character formulas can be deduced by a so-called Goddard-Kent-Olive (GKO) coset construction from the theory of $LSU(2)$ and the Kac-Weyl formulas (see \cite{1}, \cite{3a}). In \cite{loke}, T. Loke uses the coset construction to compute the Connes fusion for $\Vir$. 
Now, the Witt algebra admits two supersymmetric extensions $\WW_{0}$ and $\WW_{1/2}$ with central extensions called the Ramond and the Neveu-Schwarz algebras, noted $\Vir_{0}$ and $\Vir_{1/2}$.  In this series (\cite{NSOAI}, this paper and \cite{NSOAIII}), we naturally introduce $\Vir_{1/2}$ in the vertex superalgebra context of $L\sl_{2}$, we give a complete proof of the classification of its unitary positive energy representations, we obtain directly their character; then we give the Connes fusion rules, and an irreducible finite depth type II$_{1}$ subfactors for each representation of the discrete series.
Note that we could do the same for the Ramond algebra $\Vir_{0} $, using twisted vertex module over the vertex operator algebra of the Neveu-Schwarz algebra $\Vir_{1/2}$, as  R. W. Verrill  \cite{verrill} and  Wassermann \cite{wass2} do for twisted loop groups.

\subsection{Overview of the paper}
 Let  $\gg = \sl_{2}$, using  theta functions framework, we obtain the decomposition of $H =  \F_{NS}^{\gg} \otimes ( L(j,\ell) \otimes \F_{NS}^{\gg})$ as $\widehat{\gg}$-module. The multiplicity spaces of  irreducible components $H_{k}$ are  superintertwiners space $Hom_{\widehat{\gg}}(H_{k}, H)$; we deduce their character as module of $ \WW_{1/2}$, which acts on with  $L(c_{m},h_{pq}^{m})$ as submodule by GKO construction. The unitarity of the discrete series follows.  \\ 
$\begin{array}{c} \end{array}$ \hspace{0,25cm}   We define irreducible polynomial $\varphi_{pq}(c,h)$ from $(c_{m}, h_{pq}^{m})$. The Kac  determinant $det_{n}(c,h)$ of the sesquilinear form on $V(c,h)$ at level $n$ is easily interpolate, as a product of $\varphi_{pq}$,  computing the first examples. To prove it,  we enlight links between previous characters results and singular vectors $s$ \\ (i.e. $G_{1/2}.s = G_{3/2}.s = 0$),   whose the existence vanishes  $det_{n}$. 

  A negative Kac determinant shows easily a ghost on the region between the curves $h=h_{pq}^{c}$. Now, we go from the no-ghost region $h > 0$, $c >3/2 $ to  an order $1$ vanishing curve $C$; then, on the other side, there is a ghost. By transversality, it pass on the curve intersecting $C$ next. And so on each curves,  excepting `first intersections':  discrete series. Theorem \ref{satya} follows.
  
  Finally, a coherence argument between the characters of the multiplicity spaces $M_{pq}^{m}$ and its irreducibles (on discrete series by FQS),  shows $M_{pq}^{m}$ without others irreducibles that $L(c_{m} , h_{rs}^{m})$. So, $M_{pq}^{m} =  L(c_{m},h_{p,q}^{m})$ and we obtain the character of $L(c_{m},h_{p,q}^{m})$ as the character of $M_{pq}^{m}$, ever known by GKO construction. Theorem \ref{sati} follows.

\subsection{Main results}
The irreducible positive energy representations of the Neveu-Schwarz algebra $\Vir_{1/2}$ are denoted $L(c,h)$ with $\Omega$ its cyclic vector. Our purpose  is to give a complete proof of the classification of unitary representations, in such a way that we obtain directly the characters of the discrete series, without Feigin-Fuchs resolution  \cite{14}. The Neveu-Schwarz algebra is defined by:
 \begin{center}  $ \left\{  \begin{array}{l} 
\lbrack L_{m},L_{n} \rbrack \hspace{0,3cm} = (m-n)L_{m+n} +\frac{C}{12}(m^{3} - m) \delta_{m+n}  Ê \\
\lbrack G_{r},L_{n} \rbrack \hspace{0,28cm} = (m-\frac{n}{2})G_{r+n}    \\
\lbrack G_{r},G_{s}\rbrack_{+} = 2L_{r+s} + \frac{C}{3}(r^{2}- \frac{1}{4})\delta_{r+s} 
\end{array}   \right.  $ \end{center}
 with $m$, $n \in \ZZZ$,  $r$, $s \in \ZZZ + \1/2 $,    $L_{n}^{\star} = L_{-n} $,  $G_{r}^{\star} = G_{-r} $. \\ 
 Positive energy means that $L(c,h)  = H =Ê\bigoplus H_{n}$, with $n \in \1/2 \NNN$, such that $L_{0}\xi = (n+h)\xi$ on $H_{n}$ and $H_{0} = \CCC \Omega $ (with $C\Omega = c \Omega$). 
 \begin{lemma}  If  $L(c,h)$ is unitary,   then $ c,h \ge 0$     \end{lemma}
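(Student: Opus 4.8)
The plan is to test unitarity directly against the cyclic vector $\Omega$: for each generator $X$ that raises the energy, form $X\Omega$ and compute $\|X\Omega\|^{2} = \langle \Omega, X^{\star}X\,\Omega\rangle$ using the (anti)commutation relations together with the positive-energy vacuum conditions $L_{n}\Omega = 0$ and $G_{r}\Omega = 0$ for $n,r>0$ (these hold because $L_{0}$ acts by $h$ on $H_{0}=\CCC\Omega$ and $H_{m}=0$ for $m<0$). Positive-definiteness of the inner product then forces every such norm to be nonnegative, and a careful choice of $X$ will isolate $h$ and $c$. I expect the whole statement to follow from the single family $X = G_{-r}$ with $r$ a positive half-integer, supplemented if desired by the bosonic vectors $L_{-n}\Omega$.

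For $h\ge 0$ I would take $r=\tfrac{1}{2}$. Since $G_{1/2}\Omega=0$, the anticommutator relation gives $G_{1/2}G_{-1/2}\Omega = \{G_{1/2},G_{-1/2}\}\Omega = \bigl(2L_{0} + \tfrac{C}{3}(\tfrac{1}{4}-\tfrac{1}{4})\bigr)\Omega = 2L_{0}\Omega$, so $\|G_{-1/2}\Omega\|^{2} = 2h\,\|\Omega\|^{2}\ge 0$, whence $h\ge 0$. The crucial point is that the central term vanishes exactly at $r=\tfrac{1}{2}$, so this bound sees $h$ alone and carries no information about $c$. For $c\ge 0$ I would run the same computation at general half-integer $r\ge\tfrac{1}{2}$, obtaining $\|G_{-r}\Omega\|^{2} = \bigl(2h + \tfrac{c}{3}(r^{2}-\tfrac{1}{4})\bigr)\|\Omega\|^{2}\ge 0$; with $h\ge 0$ already fixed, dividing by $r^{2}$ and letting $r\to\infty$ makes the central term $\tfrac{c}{3}r^{2}$ dominate, forcing $c\ge 0$. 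As a cross-check one can use $\|L_{-n}\Omega\|^{2} = \bigl(2nh + \tfrac{c}{12}(n^{3}-n)\bigr)\|\Omega\|^{2}$, where the cubic growth in $n$ again yields $c\ge 0$.

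The $h\ge 0$ bound is immediate from one anticommutator, so the only genuine content is $c\ge 0$. The main obstacle, such as it is, is that any fixed-level vector gives too weak an inequality: for instance $G_{-3/2}\Omega$ yields only $2h+\tfrac{2c}{3}\ge 0$, i.e. $c\ge -3h$, which does not separate $c$ from $h$. One must therefore exploit the unbounded growth of the central term with the level, taking $r$ (or $n$) arbitrarily large, so that in the limit the sign of $c$ is pinned down independently of $h$.
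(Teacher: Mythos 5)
Your proof is correct and follows essentially the same strategy as the paper's: the paper computes $(L_{-n}\Omega, L_{-n}\Omega) = 2nh + \tfrac{n(n^{2}-1)}{12}c \ge 0$ and takes $n=1$ and then $n$ large, while you run the identical argument on the fermionic side with $\|G_{-r}\Omega\|^{2} = 2h + \tfrac{c}{3}(r^{2}-\tfrac{1}{4})$ at $r=\tfrac{1}{2}$ and $r\to\infty$ (and even cite the paper's bosonic computation as a cross-check). The difference is cosmetic; both hinge on the vanishing of the central term at the lowest mode and its unbounded growth thereafter.
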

\begin{theorem} \label{satya} The classification of  unitary representations $L(c,h)$ is:  
\begin{description}
\item[(a)] Continuous series:   $c  \ge 3/2$ and  $h \ge 0$.     
 \item[(b)] Discrete series: $(c,h) = (c_{m},h_{pq}^{m} )$    with:
 \begin{displaymath} c_{m} =  \frac{3}{2} (1 - \frac{ 8}{m(m+2) }) \quad \textrm{and} \quad   h_{pq}^{m}  =  \frac{((m+2)p-mq)^{2}-4}{8m(m+2)}  \end{displaymath}
with integers  $m \ge 2$, \  $1 \le p \le m-1$, \  $1 \le q \le  m+1 $  and  $p \equiv q [2]$.  \end{description}
  \end{theorem}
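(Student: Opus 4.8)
The plan is to work throughout in the region $c,h \ge 0$ supplied by the preceding lemma, and to establish the two inclusions separately. In one direction I must show that every continuous pair $(c \ge 3/2,\ h \ge 0)$ and every discrete pair $(c_m, h_{pq}^m)$ gives a unitary $L(c,h)$; in the other direction I must show that no further pair in $\{c,h \ge 0\}$ can be unitary. The positive direction splits into a ``representation-theoretic'' half for the discrete series and an ``analytic'' half for the continuous series, while the negative direction is the FQS-type ghost argument, which is where the bulk of the work lies.

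For the discrete series I would use the GKO coset construction set up earlier in the series. In the decomposition of $H = \F_{NS}^{\gg} \otimes (L(j,\ell) \otimes \F_{NS}^{\gg})$ as a $\widehat{\gg}$-module, the Neveu-Schwarz algebra $\WW_{1/2}$ acts on each multiplicity space $M_{pq}^m = \mathrm{Hom}_{\widehat{\gg}}(H_k, H)$ and commutes with the $\widehat{\gg}$-action. Since $H$ carries a positive-definite invariant form (being assembled from unitary pieces), this form restricts to a positive form on $M_{pq}^m$; the cyclic submodule generated by the highest-weight vector then has central charge $c_m$ and lowest weight $h_{pq}^m$, and its irreducible quotient is $L(c_m, h_{pq}^m)$, which is therefore unitary. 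This proves (b). For the continuous series I would invoke the Kac determinant $\det_n(c,h)$: once it is known to be a product of the factors $\varphi_{pq}(c,h)$, a direct sign analysis shows $\det_n(c,h) \ge 0$ for every level $n$ throughout $c \ge 3/2$, $h \ge 0$, so the contravariant Hermitian form on $V(c,h)$ is positive semi-definite and its irreducible quotient $L(c,h)$ is unitary, proving (a).

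The converse — that no $(c,h)$ with $c < 3/2$ other than the discrete points is unitary — carries the real content. First I would fix $\det_n(c,h)$ by interpolation: computing the form at the first few levels identifies its vanishing locus as the union of curves $h = h_{pq}^c$ (the zero sets of $\varphi_{pq}$), with the order of each factor governed by the levels at which a singular vector $s$, i.e. one with $G_{1/2}.s = G_{3/2}.s = 0$, appears; the link between these singular vectors and the earlier character computations fixes the multiplicities. Granting the formula, a negative value of $\det_n$ at a point certifies a ghost, so I exploit that the form is positive in the ``no-ghost'' region $h > 0$, $c > 3/2$ and track the sign of the determinant as $(c,h)$ is deformed downward across the vanishing curves. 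Each transversal, simple (order $1$) crossing of a curve flips the sign and deposits a ghost on the far side; propagating this across successive curves, and using transversality at the points where two curves meet, one finds that a ghost survives everywhere except at the distinguished ``first intersection'' points, which are precisely the discrete pairs $(c_m, h_{pq}^m)$.

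The hard part will be the transversality and propagation bookkeeping in this last step. One must control the geometry of the family $h_{pq}^c$ in the strip $0 \le c < 3/2$ — their mutual intersections, the exact order of vanishing of $\det_n$ along each branch, and the sign change across each simple crossing — carefully enough to guarantee that the ghost cannot be ``undone'' except exactly at the discrete points. Getting the multiplicities in the Kac determinant right, so that the crossings are genuinely order $1$ where claimed, and verifying that the first-intersection points coincide with the GKO list, are the two places where an error would most easily slip in.
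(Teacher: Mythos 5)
Your overall architecture coincides with the paper's: GKO coset construction for unitarity of the discrete series, the Kac determinant for the continuous series, and the FQS ghost-propagation along the vanishing curves $h=h_{pq}^{c}$ (regions of negative determinant between the curves, order-one vanishing, transversality at intersections, survival of the ghost everywhere except at the first-intersection points, which are exactly the GKO list). The converse direction and the discrete-series direction are, at the level of a proposal, faithful to what the paper actually does. I will only flag the one step whose justification, as written, would fail.

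For part (a) you assert that $\det_n(c,h)\ge 0$ for all $n$ on $c\ge 3/2$, $h\ge 0$, ``so the contravariant Hermitian form on $V(c,h)$ is positive semi-definite.'' That implication is false in general: a Hermitian matrix with nonnegative determinant can have an even number of negative eigenvalues, so nonnegativity of the level-$n$ Gram determinants alone does not exclude ghosts (Sylvester's criterion requires all leading principal minors, not the determinants of the full level-$n$ blocks). The paper closes this gap differently: it observes that no curve $h=h_{pq}^{c}$ meets the open region $h>0$, $c>3/2$, so each $\det_n$ is \emph{nowhere zero} there; it then proves, by an asymptotic computation $(u_{\alpha},u_{\alpha})=c_{\alpha}h^{r(\alpha)}(1+o(1))$ with $c_{\alpha}>0$ in a suitably ordered basis, that the form is positive definite for $h$ sufficiently large; and since the eigenvalues of $M_n(c,h)$ vary continuously and none can cross zero on a connected region where the determinant never vanishes, positivity propagates to all of $h>0$, $c>3/2$, hence positive semi-definiteness on the closure. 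You need some version of this connectedness-plus-one-interior-point argument; the sign of the determinant by itself is not enough.
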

\begin{theorem} \label{sati} The characters of the discrete series are: 
\begin{displaymath} ch(L( c_{m} , h_{pq}^{m}))(t) = tr(t^{L_{0}-c_{m}/24})=\chi_{NS}(t).\Gamma^{m}_{pq}(t).t^{-c_{m}/24} \ \  \textrm{with}    \end{displaymath}
\begin{displaymath}\chi_{NS}(t) = \prod_{n \in \NNN^{\star}}\frac{1+t^{n-1/2}}{1-t^{n}},   \ \ \  \Gamma^{m}_{pq}(t) = \sum_{n \in \ZZZ}(t^{\gamma^{m}_{pq}(n)}-t^{\gamma^{m}_{-pq}(n)} )\ \  \textrm{and} \end{displaymath}
\begin{displaymath} \gamma^{m}_{pq}(n) =  \frac{[2m(m+2)n-(m+2)p+mq]^{2}-4}{8m(m+2)} \end{displaymath}   \end{theorem}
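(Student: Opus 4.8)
The plan is to prove Theorem~\ref{sati} exactly along the thread announced in the overview: realise the right-hand side as the character of the GKO multiplicity space $M_{pq}^{m}$, and then upgrade the inclusion $L(c_{m},h_{pq}^{m})\subseteq M_{pq}^{m}$ to an equality, so that the two characters coincide. First I would fix the level $\ell$ so that the coset central charge equals $c_{m}$, and use the $\widehat{\gg}$-module decomposition of $H=\F_{NS}^{\gg}\otimes(L(j,\ell)\otimes\F_{NS}^{\gg})$ obtained earlier via the theta-function framework, writing $H=\bigoplus_{k}M_{k}\otimes H_{k}$ with $H_{k}$ the irreducible $\widehat{\gg}$-components and $M_{k}=Hom_{\widehat{\gg}}(H_{k},H)$ the multiplicity spaces on which $\Vir_{1/2}$ acts by the GKO construction.

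The character $ch(M_{pq}^{m})$ is then extracted by dividing the known character of $H$ (a fermionic factor times an affine character) by the affine characters $ch(H_{k})$, which are linearly independent. Since the affine characters are ratios of theta functions and the Weyl denominator reproduces the fermionic factor $\chi_{NS}$, this division collapses to the claimed shape $\chi_{NS}(t)\,\Gamma_{pq}^{m}(t)\,t^{-c_{m}/24}$, the numerator theta quotient contracting to the alternating sum $\Gamma_{pq}^{m}(t)=\sum_{n\in\ZZZ}\bigl(t^{\gamma^{m}_{pq}(n)}-t^{\gamma^{m}_{-pq}(n)}\bigr)$; this bookkeeping with Jacobi triple-product type identities is what produces the precise exponents $\gamma^{m}_{pq}(n)$. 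Call this explicit series $\chi_{pq}^{m}$.

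Next I would record that $M_{pq}^{m}$ is a unitary positive-energy $\Vir_{1/2}$-module, with $L(c_{m},h_{pq}^{m})$ its cyclic submodule generated by the lowest-energy vector, and that by Theorem~\ref{satya} together with the FQS criterion the only unitary irreducibles at central charge $c_{m}$ are the $L(c_{m},h_{rs}^{m})$. Hence $M_{pq}^{m}$ is semisimple and $M_{pq}^{m}=\bigoplus_{rs}N^{pq}_{rs}\,L(c_{m},h_{rs}^{m})$ with $N^{pq}_{rs}\in\NNN$ and $N^{pq}_{pq}\ge 1$. Passing to characters gives the coherence relation $\chi_{pq}^{m}=\sum_{rs}N^{pq}_{rs}\,ch(L(c_{m},h_{rs}^{m}))$, and since every summand has non-negative coefficients one reads off the coefficient-wise bound $ch(L(c_{m},h_{pq}^{m}))\le\chi_{pq}^{m}$.

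The main obstacle, and the real content of the theorem, is the reverse direction: showing $N^{pq}_{rs}=\delta_{(pq),(rs)}$, i.e. that $M_{pq}^{m}$ is already irreducible. Here I would exploit the alternating shape of $\Gamma_{pq}^{m}$: its expansion begins $t^{h_{pq}^{m}}\bigl(1-t^{pq/2}+\cdots\bigr)$, so $\chi_{pq}^{m}$ agrees with the full Verma character $\chi_{NS}(t)\,t^{h_{pq}^{m}-c_{m}/24}$ up to level $pq/2$, and the level $pq/2$ of its first negative term is exactly where the Kac determinant $det_{n}(c_{m},h_{pq}^{m})$, via the factor $\varphi_{pq}$, first vanishes, i.e. where the first singular vector $s$ of $V(c_{m},h_{pq}^{m})$ sits. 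Thus $\chi_{pq}^{m}$ already carries the first step of the resolution forced by $s$, which pins $ch(L(c_{m},h_{pq}^{m}))$ and $\chi_{pq}^{m}$ together up to and including level $pq/2$. I would then run the coherence relation as a linear system over all admissible $(p,q)$: the matrix $(N^{pq}_{rs})$ is unitriangular for the energy ordering $h_{rs}^{m}\ge h_{pq}^{m}$, hence invertible over $\ZZZ$, and the singular-vector data supplied by $det_{n}(c_{m},h)$ — which controls at exactly which levels $ch(L(c_{m},h_{rs}^{m}))$ may drop below its Verma character — matches the exponents $\gamma^{m}_{rs}(n)$ term by term; feeding this into the coefficient-wise bound forces every off-diagonal $N^{pq}_{rs}$ to vanish by induction on the energy. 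The delicate point is precisely this bootstrapping of the inequality into an equality, since a unitriangular relation alone does not force the identity; it is the alignment of the Kac-determinant vanishing levels with the theta exponents that closes the argument. This yields $M_{pq}^{m}=L(c_{m},h_{pq}^{m})$, whence $ch(L(c_{m},h_{pq}^{m}))=\chi_{pq}^{m}$ and Theorem~\ref{sati} follows.
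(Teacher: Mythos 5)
Your overall route --- identify the right-hand side with $ch(M_{pq}^{m})$ via the GKO coset and the theta-function product formula, observe $L(c_{m},h_{pq}^{m})\subseteq M_{pq}^{m}$, invoke FQS to restrict any further irreducible constituent to the form $L(c_{m},h_{rs}^{m})$, and close by a character-coherence argument --- is exactly the paper's (section \ref{wasserone}). The first three steps are sound and correspond to corollaries \ref{space} and \ref{coro} and theorem \ref{FQS}.

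The gap is in the closing step. You control $ch(L(c_{m},h_{pq}^{m}))$ only through the first singular vector at level $pq/2$, so you only know that $ch(M_{pq}^{m})$ and $ch(L(c_{m},h_{pq}^{m}))$ agree up to level $pq/2$ above the bottom. That is far too little: for $p=q=1$ one has $h_{11}^{m}+pq/2=1/2$, while an extra constituent $L(c_{m},h_{rs}^{m})$ may enter at any energy up to $h_{rs}^{m}\le m(m-2)/8$, which is much larger than $1/2$ for large $m$. Your proposed remedy --- a unitriangular system plus ``alignment of the Kac-determinant vanishing levels with the theta exponents'' and induction on the energy --- is not yet an argument: the Kac determinant locates singular vectors but does not by itself determine $ch(L(c_{m},h_{rs}^{m}))$; that would require the full inclusion-exclusion over embedded Verma submodules, i.e.\ precisely the Feigin--Fuchs resolution the paper is designed to avoid. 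What the paper actually does is (i) extract from the local analysis in the FQS proof (proposition \ref{singe}) that up to level $M=\max(pq/2,p'q'/2)$, with $(p',q')=(m-p,m+2-q)$, there are exactly \emph{two} singular vectors, at levels $pq/2$ and $p'q'/2$ (lemma \ref{quartz}), whence $ch(L(c_{m},h_{pq}^{m}))\sim\chi_{NS}(t)\,t^{h_{pq}^{m}-c_{m}/24}(1-t^{pq/2}-t^{p'q'/2})$ and agreement with $ch(M_{pq}^{m})$ through level $M$; and (ii) prove the numerical estimates $h_{pq}^{m}+M>m^{2}/8$ (lemma \ref{chine}) and $h_{rs}^{m}\le m(m-2)/8<m^{2}/8$, which together leave no room for any other constituent. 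Both the second singular vector at level $p'q'/2$ and the explicit bound of lemma \ref{chine} are absent from your proposal, and without them the bootstrap from the inequality to the equality does not close.
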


\subsection{Goddard-Kent-Olive framework} 
We take $\gg = \sl_{2}$. Let   $H$ an irreducible unitary, projective,  positive energy representation of the loop algebra $ L\gg$. We define the character of $H$ as:   
$ ch(H)(t,z) = tr( t^{L_{0}- \frac{C}{24}}z^{X_{3} }) $.  $L\gg$ acts on $\F_{NS}^{\gg}$, and by  Jacobi's triple product identity  $  \sum_{k \in \ZZZ} t^{\1/2 k^{2} }z^{k} =  \prod_{n \in \NNN^{\star}}(1+t^{n-\1/2}z)(1+t^{n-\1/2}z^{-1}) (1-t^{n})$, we prove that  $ch(\F_{NS}^{\gg})(t,z) = t^{-1/16}  \chi_{NS} (t)   \theta (t,z) $
with   $ \chi_{NS} (t) = \prod_{k \in \NNN^{\star} }(\frac{ 1+t^{n- \1/2 }}{1-t^{n} } ) $ and $   \theta (t,z) = \sum_{k \in \ZZZ} t^{\1/2 k^{2} }z^{k} $.
Hence, let $ H = L(j,\ell) $, and the theta functions $ \theta_{n,m}(t,z) = \sum_{k \in \frac{n}{2m} + \ZZZ }t^{mk^{2}}z^{mk},  $  then applying  the Weyl-Kac  formula  to $L\gg$:   $  ch(L(j,\ell)) = \frac{ \theta_{2j+1,\ell + 2}- \theta_{-2j-1,\ell + 2} }{ \theta_{1,2}- \theta_{-1,2}} $ (see \cite{5ab}, \cite{5b} or \cite{1} p 62). 
Now, adapting the proof in \cite{8} p 122, we obtain the product formula:  $ \theta(t,z).\theta_{p,m}(t,z) = \sum_{  \stackrel{0  \le q < 2(m+2)}{p \equiv q \lbrack 2 \rbrack}      }(\sum_{ n \in \ZZZ}t^{\alpha_{pq}^{m}(n)  } )\theta_{q,m+2}(t,z) $ with $   
 \alpha^{m}_{p,q}(n) =  \frac{[2m(m+2)n-(m+2)p+mq]^{2}}{8m(m+2)}.$ \\
Now,  $L\gg$ acts on $L(j,\ell) \otimes \F_{NS}^{\gg}$ at level $ \ell + 2$; we deduce:
$  ch( L(j,\ell) \otimes \F_{NS}^{\gg}) =  \sum_{  \stackrel{1  \le q \le m+1}{p \equiv q \lbrack 2 \rbrack}} F_{pq}^{m}. ch(L(k,\ell + 2)), \ 
 F_{pq}^{m} (t)  = t^{-1/16}   \chi_{NS} (t) \sum_{ n \in \ZZZ}(t^{\alpha_{p,q}^{m}(n)  }-t^{\alpha_{-p,q}^{m}(n)}),
 Ê\\  p=2j+1 ,  \  q=2k+1 $ and $   m = \ell + 2$; and the tensor product decomposition: \\Ê $  L(j,\ell) \otimes \F_{NS}^{\gg} = \bigoplus_{\stackrel{1  \le q \le m+1}{p \equiv q \lbrack 2 \rbrack} } M_{pq}^{m} \otimes  L(k,\ell + 2)  $   with  $M_{pq}^{m}$ the multiplicity space. \\ General GKO framework:  Let $\hh$ be Lie $\star$-superalgebra acting unitarily on a finite direct sum $H = \bigoplus_{}  M_{i} \otimes  H_{i}  $ with $  H_{i} $ irreducible and $ M_{i} $ the multiplicity space. Ê
We see that $M_{i}$ is the inner product space of superintertwiners $Hom_{\hh}(H_{i}, H)$.  Now, if $\dd$ is a Lie $\star$-superalgebra acting on $H$ and $H_{i}$  as  unitary, projective, positive energy representations, whose  difference ($\pi(D) - \sum \pi_{i}(D)$) supercommutes with $\hh$,  then, so is on $M_{i}$, with cocycle, the difference of the others.
Then, taking $\hh = \hat{\gg}$ and $\dd = \WW_{1/2}$, we find
 $  c_{M_{pq}^{m} } = \frac{dim(\gg)}{2} (1 - \frac{ 2g^{2}}{(\ell + g)(\ell + 2g) }  ) = \frac{3}{2} (1 - \frac{ 8}{m(m+2) }  )=: c_{m},  $
because  $m = \ell + 2 $, $ g = 2$ and $dim(\gg) = 3$. 
 Now, the character of  a $ \Vir_{1/2}$-module $H$ is :
$  ch(H)(t) = tr( t^{L_{0}- \frac{C}{24}}), $ then: 
$ ch(M_{pq}^{m})(t) =  t^{-\frac{c(m)}{24}}.\chi_{NS} (t).\Gamma_{pq}^{m} (t) $ with 
$ \Gamma^{m}_{pq}(t) = \sum_{n \in \ZZZ}(t^{\gamma^{m}_{pq}(n)}-t^{\gamma^{m}_{-pq}(n)} ),    \chi_{NS}(t) = \prod_{n \in \NNN^{\star}}\frac{1+t^{n-1/2}}{1-t^{n}} $ and 
$ \gamma^{m}_{pq}(n) =  \frac{[2m(m+2)n-(m+2)p+mq]^{2}-4}{8m(m+2)}. $ 
Hence, $h=h_{pq}^{m} = \frac{[(m+2)p-mq]^{2}-4}{8m(m+2)}$ is  the lowest eigenvalue of $L_{0}$ on $M_{pq}^{m}$;
let $(p',q') = (m-p, m+2-q)$, then: \\
$\begin{array}{c} \end{array}$  \hspace{3cm}
 $ch( M_{pq}^{m}   )  \sim t^{ - \frac{c_{m}}{24} }. \chi_{NS}(t).t^{h_{pq}^{m}  }.(1- t^{\frac{pq}{2}} - t^{\frac{p'q'}{2}}). 
$ \\ Hence, $ ch( M_{pq}^{m}   ) .  t^{ \frac{c_{m}}{24} } \sim   t^{h_{pq}^{m}  }$, and  the $h_{pq}^{m}$-eigenspace of $L_{0}$  is one-dimensional, so $L(c_{m},h_{pq}^{m})$ is a $ \Vir_{1/2}$-submodule of $M_{pq}^{m} $, and $ch( L(c_{m},h_{pq}^{m}) ) \le ch( M_{pq}^{m}   )$. \\
Finally, because $M_{pq}^{m} $ is unitary, so is for  $L(c_{m},h_{pq}^{m})$ on the discrete series.

\subsection{Kac determinant formula}  
From $(c_{m}, h_{pq}^{m})$,  we define $h_{pq}^{c}$, $\forall c \in \CCC$. Let $\varphi_{pp}(c,h) = (h-h_{pp}^{c})$, \\  
$\varphi_{pq}(c,h) = (h-h_{pq}^{c})(h-h_{qp}^{c})$ if $p \neq q$,  then  $\varphi_{pq} \in \CCC[c,h ]$ is irreducible. \\Ê Let $V_{n}(c,h)$ the $n$-eigenspace of $D=L_{0}-hI$ and $d(n)$ its dimension. \\ Let $M_{n}(c,h)$ the matrix of $( . , . ) $ on $V_{n}(c,h)$ and $det_{n}(c,h) = det( M_{n}(c,h))$. \\ For example,
$M_{0}(c,h) =  (\Omega , \Omega)  = (1) $, $M_{\1/2}(c,h) =  (G_{-\1/2}\Omega ,G_{-\1/2} \Omega)  = ( 2h) $, \\ $M_{1}(c,h) =   (L_{-1}\Omega ,L_{-1} \Omega)  = ( 2h) $, and  $M_{\3/2}(c,h) = $ \begin{center}$
 \left (  \begin{array}{cc}  (G_{-\1/2}L_{-1}\Omega ,G_{-\1/2}L_{-1} \Omega)    &   (G_{-\1/2}L_{-1}\Omega ,G_{-\3/2} \Omega)  \\
                                                        (G_{-\3/2}\Omega ,G_{-\1/2}L_{-1} \Omega)           &                 (G_{-\3/2}\Omega ,G_{-\3/2} \Omega)               \end{array}   \right )
     =    \left (  \begin{array}{cc}  2h + 4h^{2}   &   4h \\
                                                      4h        &          2h+\frac{2}{3}c             \end{array}   \right )     $ \end{center} Now, $det_{\3/2}(c_{m},h) =  8h[h^{2} - (\frac{3}{2} -\frac{c_{m}}{3})h +  c/6 ] = 8h (h - h_{13}^{m} )(h - h_{31}^{m} )  $, then, $det_{\3/2}(c,h) = 8h (h - h_{13}^{c} )(h - h_{31}^{c} )  = 8 \varphi_{11}(c,h) . \varphi_{13}(c,h) $ \ $\forall c \in \CCC$. \\  Hence,  others  examples permits to interpolate the Kac determinant formula: \begin{displaymath} det_{n}(c,h) = A_{n}  \prod_{\stackrel{0  < pq/2 \le n}{p \equiv q \lbrack 2 \rbrack} } (h - h_{pq}^{c} )^{d(n-pq/2)} = A_{n}  \prod_{\stackrel{0  < pq/2 \le n}{p \le q , \  p \equiv q \lbrack 2 \rbrack} } \varphi_{pq}^{d(n-pq/2)}(c,h)   \end{displaymath}
                                                      with $A_{n} > 0$ independent of $c$ and $h$. \\ 
To prove it, we will use singular vectors $s \in V(c,h)$, i.e. $ L_{0} . s = (h+n)s$  with $n>0$ its level, 
 and $\Vir_{1/2}^{+} . s = {0}$. This is equivalent to $G_{1/2}.s = G_{3/2}.s = 0$, and so we easily find $(mG_{-3/2} - (m+2)L_{-1}G_{-1/2})\Omega \in V_{3/2}(c_{m}, h_{13}^{m})$,  \\ $G_{-1/2}\Omega  \in V_{1/2}(c,h_{11}^{c})$, or  $ (L_{-1}^{2}-\frac{4}{3}h_{22}^{c}L_{-2}- G_{-3/2}G_{-1/2})\Omega \in V_{2}(c,h_{22}^{c})$. \\ Now, $ch(V(c,h )) = t^{h-\frac{c}{24}} \chi_{NS} (t)$ and the singular vectors generate $K(c,h)$. So, $V(c,h )$ admits a  singular vector of minimal level $n \in \1/2 \NNN$ if and only if  \begin{center}
$ch(L(c,h)) \sim  t^{h-\frac{c}{24}} \chi_{NS} (t) (1-t^{n})$. \end{center}  Now, thanks to GKO coset construction:  \begin{center} $ ch( L(c_{m},h_{pq}^{m}) ) \le ch( M_{pq}^{m}   )  \sim t^{ - \frac{c_{m}}{24} }. \chi_{NS}(t).t^{h_{pq}^{m}  }.(1- t^{\frac{pq}{2}} - t^{\frac{p'q'}{2}})$ \end{center}
So $V(c_{m},h_{pq}^{m})$ admits a singular vector $s$ at level $n' \le min(pq/2 , p'q'/2)$ and for $n > n'$, $det_{n}$  vanishes at $(c_{m},h_{pq}^{m})$ for $m$ sufficiently large integer. Then it vanishes at infinite many zeros of the irreducible $\varphi_{pq}$, which so $\varphi_{pq}$ divides $det_{n}$. But $s$ generates  a subspace of dimension $d(n-n')$ at level $n$,  so $d_{n}(c,h) = \prod_{\stackrel{0  < pq/2 \le n}{p \equiv q \lbrack 2 \rbrack} } (h - h_{pq}^{c} )^{d(n-pq/2)}$ divides  $det_{n}$. Finally,  a cardinality argument shows  $d_{n}$ and $det_{n}$,  with the same degree in $h$. The result follows.

\subsection{Friedan-Qiu-Shenker unitarity criterion }

The FQS criterion was discovered for $\Vir$ by Friedan, Qiu and Shenker \cite{4a}, but mathematicians estimated their proof too light, and then, in the same time, FQS \cite{4d} and Langlands \cite{9} published a complete proof. At the beginning of our research on  $\Vir_{1/2}$, we decided to adapt the way of Langlands, but we find a mistake in this paper (\cite{9} lemma 7b p 148: $p=2$,  $q=1$,  $m=2$,  $h_{pq}^{m}= \frac{5}{8}$,  $M=4$ or $p=4$, $q=1$, $m=3$, $h_{pq}^{m}= \frac{7}{2}$,  $M=13$ yield case $(B)$, but $(p,q) \ne (1,1)$ and $m \ngtr q+p-1$. In fact, we need to distinguish between $q \ne 1$ and $q=1$, but not between $(p,q)  \ne (1,1)$ and $q=  (1,1)$). Next, we discovered that Sauvageot  has ever published such an adaptation, without correction (\cite{10} lemma 2 (ii) p 648). Then, we chose  the way of FQS:

We are looking for a necessary condition on $ (c,h) $ for $V(c,h) $ has no ghost. First of all, if $V(c,h) $ admits no ghost then  $c, h \ge 0$ (easy). Now,  Kac determinant doesn't vanish on the region $h > 0$, $c >3/2 $, and for $(c,h)$ large, we prove that the form $( . , . ) $ is positive. So by continuity, if $h \ge 0$ and  $c  \ge 3/2 $, $V(c,h) $ admits no ghost. Now, on the region  $0 \le c < 3/2 $, $h\ge 0$ , the FQS criterion says that  $V(c,h) $ admits ghosts if  $ (c,h) $ does not belong to $(c_{m},h_{pq}^{m})$, with integers  $m \ge 2$, \  $1 \le p \le m-1$, \   $1 \le q \le  m+1 $  and  $p \equiv q [2]$, ie, exactly the discrete series given by GKO construction !  
To prove this result, we exploit the zero set of Kac determinants, constitutes by curves $C_{pq}$ of equation $h=h_{pq}^{c}$ with  $0 \ne p \equiv q [ 2 ]$. 
First of all, we restrict to $C'_{pq}$, the open subset of $C_{pq}$, between $c=3/2$ and its first intersection at level $pq/2$. Let $p'q' > pq$, $C_{p'q'}$ is a first intersector of $C'_{pq}$ if at level $p'q'/2$, it is the first to intersect $C'_{pq}$ starting from  $c=3/2$.  We see that all these first intersections constitutes exactly the discrete series.
Now, for each open region between the curves $C'_{pq}$, we can find $n$ with  $det_{n}$ negative on. This significate that $V(c,h) $ admits ghost on, and so we can eliminate these regions.   Hence now, we have to eliminate the intervals on $C'_{pq}$  between the points of the discrete series. We start from the no-ghost region $h > 0$, $c >3/2 $ and we go towards such an interval. On the way, we encounter a (well choosen) curve vanishing to order $1$; so on the other side, there is a ghost. We continue along the area of this curve with our ghost, up to an intersection point. Now, because the intersections are transversals, we can distinguish null vectors from the first curve to the second, and so our ghost continues to be a ghost on the other curve. Repeating this principle, we can go to the interval, without losing the ghost. Then,  FQS criterion and  theorem \ref{satya} follow.

\subsection{Wassermann's argument}  
We show that the multiplicity space of the coset construction, is an irreducible representation of the Neveu-Schwarz algebra, which (as in \cite{1} p 72 for $\Vir$) gives directly the characters on the discrete series without the Feigin-Fuchs resolution \cite{14}: 

As a corollary of FQS criterion's proof,  at levels $\le M = max( pq/2 , p'q'/2 )$, there exists only two singular vectors $s$ and $s'$, at levels $pq/2$ and $p'q'/2$. \\ 
Hence, $ch(L(c_{m} , h_{pq}^{m})) \sim   t^{h_{pq}^{m} - c_{m}/24}  \chi_{NS}(t)  (1 - t^{pq/2} - t^{p'q'/2})$, as for the multiplicity space $M_{pq}^{m}$,  and so
 $ch(M_{pq}^{m})-ch(L(c_{m} , h_{pq}^{m})) =   \chi_{NS}(t) . t^{- c_{m}/24} o( t^{h_{pq}^{m} + M})$. Now, we know that $L(c_{m} , h_{pq}^{m})$ is a submodule of $M_{pq}^{m}$; if  $M_{pq}^{m}$ admits an other irreducible submodule, by FQS criterion, it is of the form $L(c_{m} , h_{rs}^{m})$; but through the lemma: $h_{pq}^{m} + M > m^{2}/8$ and  $h_{rs}^{m} \le  \frac{ m(m-2)}{8 }$, we obtain, by coherence on the characters,  the contradiction: 
$\frac{ m^{2}}{8 } < M+h_{pq}^{m} < h_{rs}^{m} \le  \frac{ m(m-2)}{8 }$.  Then, Ê$M_{pq}^{m} = L(c(m),h_{p,q}^{m})$ and  $ch(L(c_{m} , h_{pq}^{m})) =  ch(M_{pq}^{m})$, but the characters of the multiplicity spaces are ever known by GKO.  The theorem \ref{sati}  follows. 

\newpage

\section{Goddard-Kent-Olive framework}
\subsection{Characters of $ L\gg$-modules}
In  this section, we take  $\gg = \sl_{2}$.   
 Let  $H$ a unitary, projective and positive energy representation of the loop algebra $ L\gg$ (see section \ref{loop}). 
 \begin{remark} Thanks to $\gg  \hookrightarrow L\gg :  \   X_{a} \mapsto  X^{a}_{0}$,  $\gg$  acts on $H$, \\Ê and by the previous work, the Virasoro algebra $\Vir$ acts on too: \\ 
 $  [L_{m} , L_{n} ] = (m-n) L_{m+n}  +  \frac{C}{12} m(m^{2}-1) \delta_{m+n}  \quad   \quad ( n \in \ZZZ, \ C \  \textrm{central} )$.  \end{remark}
\begin{definition}
A character of $H$ as $ L\gg$-module is definied by:
\begin{displaymath}  ch(H)(t,z) = tr( t^{L_{0}- \frac{C}{24}}z^{X_{3} }) 
\end{displaymath}
\end{definition}
\begin{lemma} (Jacobi's triple product identity) 
\begin{displaymath} 
 \sum_{k \in \ZZZ} t^{\1/2 k^{2} }z^{k} =  \prod_{n \in \NNN^{\star}}(1+t^{n-\1/2}z)(1+t^{n-\1/2}z^{-1}) (1-t^{n})
 \end{displaymath} 
\end{lemma}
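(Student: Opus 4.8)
The plan is to fix $t$ with $|t|<1$ and read the identity as an equality of Laurent series in $z$ on the annulus $0 < |z| < \infty$, isolating the factor that carries the $z$-dependence. Set $G(z) = \prod_{n \in \NNN^{\star}}(1+t^{n-\1/2}z)(1+t^{n-\1/2}z^{-1})$, so that the claim becomes $G(z) = c(t)\sum_{k \in \ZZZ}t^{\1/2 k^{2}}z^{k}$ with $c(t) = \prod_{n \in \NNN^{\star}}(1-t^{n})^{-1}$. The product defining $G$ converges absolutely and locally uniformly for $z \neq 0$, so $G$ is analytic there and admits a genuine Laurent expansion $G(z) = \sum_{k \in \ZZZ}a_{k}(t)z^{k}$; the whole problem is to determine the coefficients $a_{k}(t)$.

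First I would establish a functional equation in $z$. Replacing $z$ by $tz$ and reindexing the two products, every factor cancels except that the product over the $z$-terms loses its $n=1$ factor while the product over the $z^{-1}$-terms gains an $n=0$ factor, which gives
\begin{displaymath} G(tz) = \frac{1+t^{-\1/2}z^{-1}}{1+t^{\1/2}z}\,G(z) = t^{-\1/2}z^{-1}G(z). \end{displaymath}
Substituting the Laurent expansion and matching the coefficient of $z^{k}$ yields the recursion $a_{k+1}(t) = t^{k+\1/2}a_{k}(t)$, whose solution is $a_{k}(t) = a_{0}(t)\,t^{\1/2 k^{2}}$ for $k \geq 0$. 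Since the product is manifestly symmetric under $z \mapsto z^{-1}$ we have $G(z) = G(z^{-1})$, hence $a_{-k} = a_{k}$ and the formula $a_{k}(t) = a_{0}(t)\,t^{\1/2 k^{2}}$ holds for all $k \in \ZZZ$. Therefore $G(z) = a_{0}(t)\sum_{k \in \ZZZ}t^{\1/2 k^{2}}z^{k}$, and only the single normalizing factor $a_{0}(t)$ remains unknown.

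The hard part will be exactly this normalization: proving $a_{0}(t) = \prod_{n \in \NNN^{\star}}(1-t^{n})^{-1}$, because $a_{0}(t)$ is the constant term of an infinite product and cannot be read off directly from either side. To pin it down I would exploit a self-similarity of $G$: multiplying $G(z)$ by $G(-z)$ and using $(1+x)(1-x) = 1-x^{2}$ collapses the factors into $\prod_{n \in \NNN^{\star}}(1-t^{2n-1}z^{2})(1-t^{2n-1}z^{-2})$, which is precisely the product $G$ built from the base $t^{2}$ evaluated at $-z^{2}$. Substituting the series form already obtained on both sides produces a relation linking $a_{0}(t)$ to $a_{0}(t^{2})$; iterating this relation and using that $a_{0}(t) \to 1$ as $t \to 0$ (the empty-product normalization) forces the claimed value, and at this single step one may instead invoke Euler's classical product identities. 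Multiplying the resulting equality through by $\prod_{n \in \NNN^{\star}}(1-t^{n})$ gives the stated formula, the passage between the formal-series and analytic viewpoints being justified by absolute convergence together with the identity theorem.
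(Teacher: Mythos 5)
The paper does not actually prove this lemma; it simply cites \cite{1} p.~62, so your argument is necessarily a different, self-contained route: it is the classical functional-equation proof of the triple product identity, and it is essentially correct. The recursion $a_{k+1}(t)=t^{k+1/2}a_{k}(t)$ obtained from $G(tz)=t^{-1/2}z^{-1}G(z)$, combined with the manifest symmetry $G(z)=G(z^{-1})$, is right and does reduce everything to the single normalization $a_{0}(t)$. The one step you must spell out is that normalization: comparing constant terms in $G(z)G(-z)=G_{t^{2}}(-z^{2})$ (where $G_{t^{2}}$ is, in your notation, the product $G$ built from the base $t^{2}$) gives
\begin{displaymath}
a_{0}(t)^{2}\sum_{k\in\ZZZ}(-1)^{k}t^{k^{2}}=a_{0}(t^{2}),
\end{displaymath}
which is not yet a relation ``linking $a_{0}(t)$ to $a_{0}(t^{2})$'': it involves the extra unknown series $\sum_{k}(-1)^{k}t^{k^{2}}$. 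To close the loop, evaluate that series from the product side using the part of the identity you have already established: $\sum_{k}(-1)^{k}t^{k^{2}}=G_{t^{2}}(-1)/a_{0}(t^{2})=\prod_{n\ge 1}(1-t^{2n-1})^{2}/a_{0}(t^{2})$, whence $a_{0}(t^{2})=a_{0}(t)\prod_{n\ge 1}(1-t^{2n-1})$ after taking positive square roots for $0<t<1$. Then $H(t):=a_{0}(t)\prod_{n\ge 1}(1-t^{n})$ satisfies $H(t^{2})=H(t)$, and iterating $t\mapsto t^{2^{k}}$ with $H(0^{+})=1$ forces $H\equiv 1$, i.e.\ $a_{0}(t)=\prod_{n\ge 1}(1-t^{n})^{-1}$ as claimed; the identity theorem extends this from $(0,1)$ to $|t|<1$. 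With that one evaluation made explicit your proof is complete, and what it buys over the paper's bare citation is a short self-contained argument, at the modest cost of the branch bookkeeping for the half-integer powers (harmless: substitute $t=u^{2}$) and the convergence remarks you already supplied.
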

\begin{proof}  See \cite{1} p 62.  \end{proof}
\begin{remark} On section 4.2.1 of \cite{NSOAI}, $L \gg$ acts on $\F_{NS}^{\gg}$, with $\pi_{\F_{NS}^{\gg}}(X_{3}) = S_{0}^{3}$.  \end{remark}
\begin{proposition}  \label{ch2} \quad $ ch(\F_{NS}^{\gg})(t,z) = t^{-1/16}  \chi_{NS} (t)   \theta (t,z) \quad \textrm{with} $
\begin{displaymath}
\chi_{NS} (t) = \prod_{n \in \NNN^{\star} }(\frac{ 1+t^{n- \1/2 }}{1-t^{n} } ) \quad \textrm{and} \quad  \theta (t,z) = \sum_{k \in \ZZZ} t^{\1/2 k^{2} }z^{k} \end{displaymath}
\end{proposition}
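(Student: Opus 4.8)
The plan is to compute the graded trace $tr(t^{L_{0}-C/24}z^{X_{3}})$ directly from the fermionic description of $\F_{NS}^{\gg}$ set up in \cite{NSOAI}. For $\gg=\sl_{2}$ one has $\dim\gg=3$, and $\F_{NS}^{\gg}$ is the Fock space of three free Neveu-Schwarz fermions (one per basis vector of $\gg$), generated from the vacuum $\Omega$ by creation operators $\psi^{a}_{-r}$ with half-integer modes $r=n-\frac{1}{2}$, $n\in\NNN^{\star}$, and $a$ ranging over an $X_{3}$-eigenbasis of $\gg$. Since $L_{0}$ and $X_{3}=S_{0}^{3}$ both act diagonally in this basis, the trace factorises as an infinite product over the individual modes, each of which (by the Pauli principle) is either empty or singly occupied.

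First I would record the relevant weights. Choosing $a\in\{+,0,-\}$ so that $X_{3}$ acts on the three fermions with eigenvalues $+1,0,-1$ (the spin-$1$, i.e.\ adjoint, weights of $\sl_{2}$), the creation operator $\psi^{a}_{-r}$ raises $L_{0}$ by $r$ and $X_{3}$ by the corresponding weight $w_{a}$. Next I would pin down the ground-state contribution: the Neveu-Schwarz vacuum satisfies $L_{0}\Omega=0$, and three free fermions carry central charge $C=3\cdot\frac{1}{2}=\frac{3}{2}$, so the overall prefactor $t^{-C/24}$ equals $t^{-1/16}$. This is the sole origin of the claimed factor $t^{-1/16}$.

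With the weights in hand, each mode $\psi^{a}_{-r}$ contributes a factor $(1+t^{r}z^{w_{a}})$ to the trace (empty $\mapsto 1$, occupied $\mapsto t^{r}z^{w_{a}}$), whence
\begin{displaymath} ch(\F_{NS}^{\gg})(t,z)=t^{-1/16}\prod_{n\in\NNN^{\star}}(1+t^{n-1/2})(1+t^{n-1/2}z)(1+t^{n-1/2}z^{-1}). \end{displaymath}
Finally I would invoke the Jacobi triple product identity (the preceding lemma) in the form $\prod_{n}(1+t^{n-1/2}z)(1+t^{n-1/2}z^{-1})(1-t^{n})=\theta(t,z)$ to replace the two charged factors by $\theta(t,z)/\prod_{n}(1-t^{n})$; the remaining neutral factor $\prod_{n}(1+t^{n-1/2})$ then combines with this denominator to give exactly $\chi_{NS}(t)$, yielding $ch(\F_{NS}^{\gg})(t,z)=t^{-1/16}\chi_{NS}(t)\theta(t,z)$.

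The trace factorisation and the manipulation with the triple product are routine; the substance of the argument is entirely in the structural input imported from \cite{NSOAI}. The main obstacle is to verify those facts with the correct normalisations: that $\F_{NS}^{\gg}$ is really this three-fermion Fock space, that $X_{3}=S_{0}^{3}$ acts with integer weights $+1,0,-1$ (and not $\pm 2$), and that the Neveu-Schwarz vacuum energy vanishes so that $-C/24$ alone produces $t^{-1/16}$. Care is also needed in treating the real (Majorana) fermions in the complexified $X_{3}$-eigenbasis, and in reading the product as a well-defined formal power series.
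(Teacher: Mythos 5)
Your proposal is correct and follows essentially the same route as the paper: diagonalise $S_{0}^{3}$ on the three NS fermions (the paper passes explicitly to the eigenbasis $\varphi^{1},\varphi^{2},\varphi^{3}$ with weights $+1,-1,0$, which is exactly your complexified Majorana basis), take the prefactor $t^{-1/16}$ from $c=\frac{3}{2}$, factor the trace over modes to get $t^{-1/16}\prod_{n}(1+t^{n-1/2})(1+t^{n-1/2}z)(1+t^{n-1/2}z^{-1})$, and finish with the Jacobi triple product. The verifications you flag as the "main obstacle" are precisely the commutator computations the paper carries out from $[S_{m}^{a},\psi_{n}^{b}]=i\sum_{c}\Gamma_{ab}^{c}\psi_{m+n}^{c}$.
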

\begin{proof} 
  $C$ acts as multiplicative constant $c_{\F_{NS}^{\gg}}=  \frac{dim(\gg)}{2} =   \frac{3}{2} $, so,  $- \frac{c}{24} = -1/16  $ \\
$ [S_{m}^{a} , \psi_{n }^{b} ] = i\sum_{c} \Gamma_{ab}^{c}  \psi^{c}_{m+n}$, so, $[S_{0}^{3} , \psi_{n }^{3} ] =0$,  $[S_{0}^{3} , \psi_{n }^{1} ] = i \psi^{2}_{n}$, $[S_{0}^{3} , \psi_{n }^{2} ] = -i \psi^{1}_{n}$.  
 Let  $\varphi^{3}_{n} = \psi_{n }^{3}$,   $\varphi^{1}_{n} = i\psi^{1}_{n} - \psi_{n }^{2} $,   $\varphi^{2}_{n} = \psi^{1}_{n} - i\psi_{n }^{2} $, then, 
 $[S_{0}^{3} , \varphi_{n }^{3} ] =0$,  $[S_{0}^{3} , \varphi_{n }^{1} ] =  \varphi^{1}_{n}$ and $[S_{0}^{3} , \varphi_{n }^{2} ] = - \varphi^{2}_{n}$. 
 Now, if $M = PDP^{-1} $, then, $tr(M) = tr(D) $ and $tr(z^{M}) = tr(z^{D}) $, but, $ad_{S_{0}^{3}}$ acts diagonally on $\widehat{\gg}_{-}$ with basis ($\varphi_{n}^{i}$), \\  $[L_{0} , \varphi_{m }^{i} ] = -m \varphi_{m }^{i} $, 
and $S_{0}^{3} \Omega = 0$, so, it suffices to associate: \\  $t^{n-\1/2}$ to $\varphi_{-n +\1/2 }^{3} $, \  $t^{n-\1/2}z$ to $\varphi_{-n + \1/2 }^{1} $, and   $t^{n-\1/2}z^{-1}$ to $\varphi_{-n + \1/2 }^{2} $ to find: 
\begin{displaymath}
ch(\F_{NS}^{\gg})(t,z) = t^{-1/16} \prod_{n \in \NNN^{\star}} (1+t^{n-\1/2})(1+t^{n-\1/2}z)(1+t^{n-\1/2}z^{-1})
\end{displaymath} 
The result follows by the Jacobi's triple product identity.  
 \end{proof}
 \begin{definition}   Let $m \in \NNN^{\star} $, $n \in \ZZZ $, $t, z \in \CCC$ with $\Vert t  \Vert   < 1$. \\  Let the theta functions:
 \begin{displaymath} \theta_{n,m}(t,z) = \sum_{k \in \frac{n}{2m} + \ZZZ }t^{mk^{2}}z^{mk}  \end{displaymath} 
 \end{definition}
 \begin{theorem} \label{ch1} Let $ H = L(j,\ell)$,   irreducible representation of $L\sl_{2}$, then
 \begin{displaymath}   ch(L(j,\ell)) = \frac{ \theta_{2j+1,\ell + 2}- \theta_{-2j-1,\ell + 2} }{ \theta_{1,2}- \theta_{-1,2}} \end{displaymath} 
 \end{theorem}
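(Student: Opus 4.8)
The statement to prove is the Weyl--Kac character formula for the irreducible
level-$\ell$ representation $L(j,\ell)$ of $\widehat{\sl_2} = L\gg$, expressed in
terms of the theta functions $\theta_{n,m}$. Let me think about how I would
establish this.

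The plan is to invoke the general Weyl--Kac character formula for integrable
highest-weight modules of an affine Kac--Moody algebra and then specialize it to
$\widehat{\sl_2}$ at level $\ell$. The Weyl--Kac formula expresses the (formal)
character of $L(\Lambda)$ as an alternating sum over the affine Weyl group
$\widehat{W}$ of the numerator, divided by the Weyl--Kac denominator. For
$\widehat{\sl_2}$ the affine Weyl group is the infinite dihedral group, generated
by the two simple reflections, and it decomposes as a semidirect product of the
finite Weyl group $\ZZZ/2$ of $\sl_2$ with the coroot lattice, which here is
essentially $\ZZZ$ acting by translations. This translation structure is exactly
what produces the lattice sums defining the theta functions.

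First I would fix the notation, writing $L(j,\ell)$ as the integrable highest
weight module whose $\sl_2$-highest weight is $2j$ and whose level is $\ell$; the
relevant dominant integral affine weight $\Lambda$ satisfies $0 \le 2j \le \ell$.
I would then record the two specialized variables: $t$ tracking the energy
grading $L_0 - C/24$ and $z$ tracking the Cartan action $X_3$. Under these
specializations, a weight of the form (finite weight, level, degree) contributes
a monomial $t^{(\text{energy})} z^{(\text{finite weight})}$. Next, I would write
out the Weyl--Kac numerator as the alternating sum over $\widehat{W}$ applied to
$e^{\Lambda + \widehat{\rho}}$, and carry out the semidirect-product splitting so
that the translation part of $\widehat{W}$ generates the sum over the lattice
$\frac{n}{2m} + \ZZZ$ appearing in $\theta_{n,m}$, while the finite reflection
produces the two terms $\theta_{2j+1,\ell+2} - \theta_{-2j-1,\ell+2}$. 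The same
computation with $\Lambda = 0$ (the basic/trivial-type denominator at the
appropriate normalization) yields $\theta_{1,2} - \theta_{-1,2}$ for the
denominator; here the shift $\ell \mapsto \ell + 2$ reflects the dual Coxeter
number $g = 2$ entering through $\widehat{\rho}$.

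The heart of the argument, and the step I expect to be the main obstacle, is the
bookkeeping that matches the two specialized parameters in the Weyl--Kac sum to
the quadratic exponents $mk^2$ and the linear exponents $mk$ in $\theta_{n,m}$.
Concretely, one must verify that the ``energy'' of a translated weight is a
perfect quadratic in the translation parameter, with the normalization constant
$-C/24$ absorbing the strange-formula correction so that the $t$-exponent becomes
exactly $mk^2$ with $m = \ell+2$ (numerator) and $m=2$ (denominator), and that
the simultaneous $z$-exponent is $mk$ with the correct half-integer offset
$\frac{2j+1}{2(\ell+2)}$ recorded in the first index of $\theta_{2j+1,\ell+2}$.
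Rather than reprove the Weyl--Kac formula from scratch, I would cite it in the
form given in the standard references and reduce the claim to this explicit
affine-$\sl_2$ specialization; the calculation is routine but delicate in the
constants, so I would present the translation/reflection decomposition carefully
and then simply quote \cite{5ab}, \cite{5b} or \cite{1} for the identity, as the
excerpt already indicates.
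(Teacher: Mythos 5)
Your proposal takes essentially the same route as the paper: the paper's entire proof is a one-line appeal to the Weyl--Kac character formula specialized to $L\sl_{2}$, citing the same references you quote. Your additional discussion of the affine Weyl group's translation/reflection decomposition and the bookkeeping of the $t$- and $z$-exponents is a correct elaboration of that specialization, not a different argument.
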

\begin{proof}  An application of the Weyl-Kac character formula  to $L\sl_{2}$ \\  (see \cite{5ab}, \cite{5b} or \cite{1} p 62).    \end{proof}
\begin{proposition} (Product formula)  \label{prod}
 \begin{displaymath} \theta(t,z).\theta_{p,m}(t,z) = \sum_{  \stackrel{0  \le q < 2(m+2)}{p \equiv q \lbrack 2 \rbrack}      }(\sum_{ n \in \ZZZ}t^{\alpha_{pq}^{m}(n)  } )\theta_{q,m+2}(t,z)  \end{displaymath} 
  \begin{displaymath}  
  \textrm{with} \quad \quad  \alpha^{m}_{p,q}(n) =  \frac{[2m(m+2)n-(m+2)p+mq]^{2}}{8m(m+2)}
     \end{displaymath} 
\end{proposition}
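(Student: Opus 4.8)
The plan is to expand the left-hand side as a double series and reorganize it by the power of $z$, using a completion of squares that diagonalizes the quadratic form in the exponent of $t$. Writing out the definitions,
\[
\theta(t,z)\,\theta_{p,m}(t,z) = \sum_{a \in \ZZZ}\ \sum_{k \in \frac{p}{2m}+\ZZZ} t^{\frac{a^{2}}{2}+mk^{2}}\, z^{\,a+mk},
\]
which converges absolutely for $\Vert t \Vert < 1$ and $z$ in any compact annulus, so the rearrangements below are legitimate. The key algebraic identity, obtained by completing the square so that the linear form $a+mk$ (the exponent of $z$) becomes one of the new coordinates, is
\[
\frac{a^{2}}{2}+mk^{2} = \frac{(a+mk)^{2}}{m+2} + \frac{m\,(a-2k)^{2}}{2(m+2)},
\]
valid identically in $a,k$; I would verify it by matching the coefficients of $a^{2}$, $ak$ and $k^{2}$.

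Next I would set $w = a+mk$ (the exponent of $z$) and $u = a-2k$, and collect the terms with a fixed value of $w$. Since $k \in \frac{p}{2m}+\ZZZ$ and $a \in \ZZZ$, the exponent $w$ runs through $\frac{p}{2}+\ZZZ$; writing $k = \frac{p}{2m}+\ell$ and $a = w-mk$ shows that, for fixed $w$, the fiber is parametrized by $\ell \in \ZZZ$, along which $u$ runs through the coset $\big(w-\frac{p}{2}-\frac{p}{m}\big)+(m+2)\ZZZ$. Hence the coefficient of $z^{w}$ in the product factors as $t^{w^{2}/(m+2)}\cdot A_{w}(t)$ with $A_{w}(t) = \sum_{u}t^{mu^{2}/(2(m+2))}$, the sum being over that coset. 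The first factor $t^{w^{2}/(m+2)}$ is exactly the coefficient of $z^{w}$ in $\theta_{q,m+2}$ whenever $w \in \frac{q}{2}+(m+2)\ZZZ$; and because replacing $w$ by $w+(m+2)$ leaves the coset of $u$ unchanged, $A_{w}$ depends only on the class $q$ of $w$ modulo $(m+2)$. This is the crucial point that lets the double series collapse into a $\ZZZ$-linear combination of the $\theta_{q,m+2}$.

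Finally I would identify the coefficient with the stated expression. Taking the representative $w=\frac{q}{2}$ and parametrizing the coset by $u = \frac{q}{2}-\frac{p}{2}-\frac{p}{m}-(m+2)n$, $n \in \ZZZ$, a direct computation gives $2mu = -\big(2m(m+2)n+(m+2)p-mq\big)$, so $\frac{mu^{2}}{2(m+2)} = \frac{(2mu)^{2}}{8m(m+2)} = \frac{[2m(m+2)n+(m+2)p-mq]^{2}}{8m(m+2)}$; replacing $n$ by $-n$ (permissible since the sum is over all of $\ZZZ$) turns this into $\alpha^{m}_{p,q}(n)$, whence $A_{w}(t) = \sum_{n\in\ZZZ}t^{\alpha^{m}_{pq}(n)}$. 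It remains to pin down the index set: as $w$ ranges over $\frac{p}{2}+\ZZZ$, its class modulo $(m+2)$ yields exactly the values $0 \le q < 2(m+2)$ with $q \equiv p\,[2]$, each occurring once (using $\theta_{q,m+2}=\theta_{q+2(m+2),m+2}$), and assembling the coefficient over $w$ reproduces the right-hand side. I expect the main obstacle to be purely bookkeeping: tracking the half-integer shifts so that the residue $q$ is correctly normalized and the parity constraint $q \equiv p\,[2]$ emerges, together with the harmless $n\mapsto -n$ symmetrization needed to match the sign conventions in $\alpha^{m}_{p,q}$.
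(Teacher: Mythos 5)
Your proposal is correct and follows essentially the same route as the paper: the paper also expands the product as a double series, passes to the variables $(m+2)s = k-2k'$ and $(m+2)s' = k+mk'$ (your $u$ and $w$ up to the factor $m+2$), uses the identity $\frac{1}{2}k^{2}+mk'^{2} = \frac{1}{2}m(m+2)s^{2}+(m+2)s'^{2}$, and regroups by the residue $q$ of the $z$-exponent to recover $\theta_{q,m+2}$ times $\sum_{n}t^{\alpha^{m}_{pq}(n)}$. The only cosmetic difference is your explicit $n\mapsto -n$ symmetrization versus the paper's choice of sign in defining $s$.
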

\begin{proof}  We adapt the proof in \cite{8c} or \cite{8} p 122, to the super case:
\begin{displaymath}  \theta(t,z).\theta_{p,m}(t,z) = \sum_{k , k'} t^{\1/2 k^{2}+mk'^{2}}z^{k+mk'} \end{displaymath}
Let $k=i$, $k' = \frac{p}{2m} + i' $ where $i$, $i' \in \ZZZ$; we define $s$, $s'$ by: 
\begin{itemize}
\item   $(m+2)s = k-2k' = i-2i'- \frac{p}{m}  $
\item   $(m+2)s' = k+mk' = (m+2)(k'+s) $
\end{itemize}
Now, $p+2(i-2i') = 2(m+2)n + q$ with $ 0  \le q < 2(m+2)$,  \ $p \equiv  q  \lbrack  2  \rbrack$ , then: 
\begin{displaymath} 
s = n -\frac{(m+2)p - mq }{2m(m+2)}    \hspace{0,3cm}    \textrm{and}   \hspace{0,3cm}  s' = n'+ \frac{q }{2(m+2)}  \hspace{0,3cm}   n, n' \in \ZZZ  \hspace{0,3cm} ( \textrm{with}  \hspace{0,3cm}  n'=n+i' ).
\end{displaymath} 
This gives  a bijection between pairs $(k,k')$ and triples $(q,s,s')$. \\
Now, $\1/2 k^{2}+mk'^{2} = \1/2 (ms + 2s')^{2} + m(s-s')^{2} = \1/2  m(m+2)s^{2} + (m+2) s'^{2}  $ \\Ê and 
$\1/2  m(m+2)s^{2} = \1/2  m(m+2)(n -\frac{(m+2)p - mq }{2m(m+2)} )^{2} =   \alpha^{m}_{p,q}(n) $ \end{proof}
\begin{remark}  \label{rk}
On \cite{NSOAI} section 4.2.3, $L\gg$ acts on $ \F_{NS}^{\gg}  \otimes L(j,\ell)$ as unitary,  projective, positive energy representation of  level $ \ell + 2$  (see \cite{NSOAI} def. 4.36).   \end{remark}
\begin{corollary}  \label{cor}  Let $p=2j+1 $,   $q=2k+1 $ and $m = \ell + 2$, then:
\begin{displaymath}   ch(\F_{NS}^{\gg}  \otimes L(j,\ell)) =  \sum_{  \stackrel{1  \le q \le m+1}{p \equiv q \lbrack 2 \rbrack}      } F_{pq}^{m}. ch(L(k,\ell + 2))
\end{displaymath}
\begin{displaymath} 
   \textrm{with} \quad \quad   F_{pq}^{m} (t)  = t^{-1/16}  \chi_{NS} (t) \sum_{ n \in \ZZZ}(t^{\alpha_{p,q}^{m}(n)  }-t^{\alpha_{-p,q}^{m}(n)})
\end{displaymath}
\end{corollary}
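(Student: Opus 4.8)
The plan is to compute the character as a product and then reorganize it by the product formula. First I would use that $\F_{NS}^{\gg}$ and $L(j,\ell)$ are both $L\gg$-modules and that, on their tensor product, the grading operators $L_0$, $C$ and $X_3$ act as the sums of their actions on the two factors (grading by total conformal weight and total $X_3$-weight). Hence the trace defining the character factorizes,
\[ ch(\F_{NS}^{\gg}\otimes L(j,\ell))(t,z)=ch(\F_{NS}^{\gg})(t,z)\cdot ch(L(j,\ell))(t,z). \]
Substituting Proposition~\ref{ch2} and Theorem~\ref{ch1}, with $p=2j+1$ and $m=\ell+2$, the right-hand side becomes
\[ t^{-1/16}\chi_{NS}(t)\,\frac{\theta(t,z)\bigl(\theta_{p,m}(t,z)-\theta_{-p,m}(t,z)\bigr)}{\theta_{1,2}(t,z)-\theta_{-1,2}(t,z)}. \]

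Next I would expand the numerator by applying Proposition~\ref{prod} to each of $\theta\cdot\theta_{p,m}$ and $\theta\cdot\theta_{-p,m}$ (the parity condition $p\equiv q\ [2]$ is insensitive to $p\mapsto -p$), obtaining
\[ \theta\cdot(\theta_{p,m}-\theta_{-p,m})=\sum_{\stackrel{0\le q<2(m+2)}{p\equiv q\,[2]}} c_q(t)\,\theta_{q,m+2},\qquad c_q(t)=\sum_{n\in\ZZZ}\bigl(t^{\alpha^m_{p,q}(n)}-t^{\alpha^m_{-p,q}(n)}\bigr). \]

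The main step is to \emph{fold} the sum over the full period $0\le q<2(m+2)$ down to the range $1\le q\le m+1$, so as to produce the antisymmetric numerators $\theta_{q,m+2}-\theta_{-q,m+2}$ of $ch(L(k,\ell+2))$. Writing $M=m+2$, I use the periodicity $\theta_{n,M}=\theta_{n+2M,M}$ (whence $\theta_{-q,M}=\theta_{2M-q,M}$) together with the reflection identity $c_{2M-q}=-c_q$. This identity is the one genuine computation and is where I expect the main obstacle to lie: substituting $q\mapsto 2(m+2)-q$ into $\alpha^m_{p,q}(n)$, using that $\alpha^m_{p,q}(n)$ depends on its defining bracket only through its square, and reindexing $n\mapsto-(n+1)$, one checks $\alpha^m_{p,2M-q}(n)=\alpha^m_{-p,q}(-(n+1))$ and $\alpha^m_{-p,2M-q}(n)=\alpha^m_{p,q}(-(n+1))$, hence $c_{2M-q}=-c_q$. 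The two fixed points $q=0$ and $q=m+2$ then satisfy $c_q=-c_q=0$ (and $q=0$ is anyway excluded by parity), so only the pairs $\{q,2M-q\}$ with $1\le q\le m+1$ survive, combining to
\[ \theta\cdot(\theta_{p,m}-\theta_{-p,m})=\sum_{\stackrel{1\le q\le m+1}{p\equiv q\,[2]}} c_q(t)\,(\theta_{q,m+2}-\theta_{-q,m+2}). \]

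Finally, I would divide by $\theta_{1,2}-\theta_{-1,2}$ and set $q=2k+1$: since $(\ell+2)+2=m+2$, Theorem~\ref{ch1} at level $\ell+2$ identifies each quotient $(\theta_{q,m+2}-\theta_{-q,m+2})/(\theta_{1,2}-\theta_{-1,2})$ with $ch(L(k,\ell+2))$, and the surviving prefactor is exactly $F_{pq}^m(t)=t^{-1/16}\chi_{NS}(t)\,c_q(t)$, which is the claimed formula. Everything outside the reflection identity $c_{2M-q}=-c_q$ and the endpoint vanishing is substitution and reindexing.
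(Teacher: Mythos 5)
Your proposal is correct and follows essentially the same route as the paper: factorize the character of the tensor product, expand $\theta\cdot(\theta_{p,m}-\theta_{-p,m})$ by Proposition~\ref{prod}, and fold the range $0\le q<2(m+2)$ onto $1\le q\le m+1$ using $\theta_{2(m+2)-q,m+2}=\theta_{-q,m+2}$ together with the reflection identity $\alpha^{m}_{p,2(m+2)-q}(n)=\alpha^{m}_{-p,q}(-n-1)$ and the vanishing of the coefficients at $q=0$ and $q=m+2$. These are exactly the identities the paper's proof invokes, so there is nothing to add.
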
 

We apply theorem \ref{ch1},  propositions \ref{ch2} and

\begin{proof} 
$L\gg$ acts on $H$ as $(I \otimes X + X \otimes I )$, then: \\ 
 $ch(\F_{NS}^{\gg}  \otimes L(j,\ell)) =ch(  \F_{NS}^{\gg}).ch( L(j,\ell))$; now by proposition  \ref{prod}: 
 \begin{displaymath} \theta(t,z).(\theta_{p,m}(t,z) - \theta_{-p,m}(t,z) ) = \sum_{  \stackrel{0  \le q < 2(m+2)}{p \equiv q \lbrack 2 \rbrack}      }(\sum_{ n \in \ZZZ}t^{\alpha_{pq}^{m}(n)  } -t^{\alpha_{-p,q}^{m}(n)})\theta_{q,m+2}(t,z)  \end{displaymath}

   But for $m+2 \le q' < 2(m+2) $,   $q'= 2(m+2)-q $ with $1 \le q \le m+2 $. Now by symmetry, $ \theta_{2(m+2)-q,m + 2}  =  \theta_{-q,m + 2}$,  and  $F_{p, 2(m+2)-q}^{m} = -F_{pq}^{m}$ because  $\alpha_{p,2(m+2)-q}^{m}(n) = \alpha_{-p,q}^{m}(-n-1)$.
Finally,  $F_{p0}^{m} = F_{p, m+2}^{m} = 0$ because \\ $\alpha_{p,0}^{m}(n) = \alpha_{-p,0}^{m}(-n)$ and Ê $\alpha_{p,m+2}^{m}(n) = \alpha_{-p,m+2}^{m}(-n-1)$; the result follows.
\end{proof}  
\begin{corollary}  \label{mult}  (Tensor product decomposition)
\begin{displaymath}  \F_{NS}^{\gg}  \otimes L(j,\ell)= \bigoplus_{\stackrel{1  \le q \le m+1}{p \equiv q \lbrack 2 \rbrack} } M_{pq}^{m} \otimes  L(k,\ell + 2) 
\end{displaymath}  with  $M_{pq}^{m}$ the multiplicity space.
\end{corollary}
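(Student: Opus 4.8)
The plan is to promote the character identity of Corollary \ref{cor} to a genuine isomorphism of $\widehat{\gg}$-modules; the one extra ingredient is complete reducibility of the $L\gg$-action. By Remark \ref{rk}, $L\gg$ acts on $\F_{NS}^{\gg} \otimes L(j,\ell)$ as a unitary, projective, positive energy representation at the fixed level $m = \ell+2$. Such a representation is completely reducible into the integrable irreducibles of that level, which for $\widehat{\sl}_{2}$ at level $m$ are exactly the $L(k,\ell+2)$ with $0 \le 2k \le m$, that is $q = 2k+1$ with $1 \le q \le m+1$. I would therefore first record the abstract decomposition
\begin{displaymath} \F_{NS}^{\gg} \otimes L(j,\ell) = \bigoplus_{1 \le q \le m+1} M_{k} \otimes L(k,\ell+2), \qquad M_{k} = Hom_{\widehat{\gg}}(L(k,\ell+2), \F_{NS}^{\gg} \otimes L(j,\ell)). \end{displaymath}

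Next I would take the two-variable character of both sides. On each summand $X_{3} \in \widehat{\gg}$ acts purely on the factor $L(k,\ell+2)$, while the total $L_{0}$ splits, by the GKO mechanism, as the coset (Neveu--Schwarz) Virasoro $L_{0}$ acting on $M_{k}$ plus the affine Sugawara $L_{0}$ acting on $L(k,\ell+2)$. Hence the trace $tr(t^{L_{0}-C/24}z^{X_{3}})$ factorizes compatibly with the decomposition and yields
\begin{displaymath} ch(\F_{NS}^{\gg} \otimes L(j,\ell))(t,z) = \sum_{1 \le q \le m+1} ch(M_{k})(t)\cdot ch(L(k,\ell+2))(t,z), \end{displaymath}
where $ch(M_{k})$ is the graded dimension of the multiplicity space. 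Comparing with Corollary \ref{cor}, and using that the affine characters $ch(L(k,\ell+2))$ for distinct admissible $k$ are linearly independent, I would match coefficients to get $ch(M_{k}) = F_{pq}^{m}$ with $p = 2j+1$, $q = 2k+1$. The linear independence is immediate from Theorem \ref{ch1}: the $ch(L(k,\ell+2))$ are the distinct theta quotients $(\theta_{q,m+2}-\theta_{-q,m+2})/(\theta_{1,2}-\theta_{-1,2})$, which in particular have pairwise distinct lowest conformal weights.

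Finally I would read off the parity constraint. The coefficient $F_{pq}^{m}$ coming from the product formula of Proposition \ref{prod} vanishes whenever $p \not\equiv q \ [2]$, since the expansion there only produces terms with $p \equiv q \ [2]$. For such $k$ the graded dimension of $M_{k}$ is therefore zero, so $M_{k} = 0$, and the sum collapses to the range $1 \le q \le m+1$ with $p \equiv q \ [2]$. Writing $M_{pq}^{m} := M_{k}$ then yields exactly the asserted decomposition, with $M_{pq}^{m}$ the multiplicity space (and incidentally $ch(M_{pq}^{m}) = F_{pq}^{m}$).

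The step I expect to be the main obstacle is the complete reducibility of unitary positive energy representations of $\widehat{\gg}$ at a fixed level, since it is precisely what licenses the passage from the character identity back to an honest direct-sum decomposition of modules; I would import this (together with the classification of the level-$m$ integrable irreducibles) from the representation theory developed in \cite{NSOAI}, or from the standard Kac--Peterson theory.
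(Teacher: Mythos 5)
Your proposal is correct and follows essentially the same route as the paper: the paper's proof is precisely ``complete reducibility plus Remark \ref{rk} give a direct sum of irreducibles of type $L(k,\ell+2)$, then Corollary \ref{cor} identifies the multiplicities.'' You simply spell out the details the paper leaves implicit (linear independence of the affine characters and the vanishing of $F_{pq}^{m}$ for $p\not\equiv q\ [2]$), which is a faithful elaboration rather than a different argument.
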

\begin{proof}
By complete reducibility and remark \ref{rk}, $ \F_{NS}^{\gg} \otimes L(j,\ell) $ is a direct sum of irreducibles of type $L(k, \ell + 2) $; the result follows by  corollary \ref{cor}. 
\end{proof} 
  \begin{corollary}  \label{mult56}  As $\hat{\gg} = \hat{\gg}_{+} \ltimes \hat{\gg}_{-}$ representations, we obtain;   
  \begin{displaymath} \F_{NS}^{\gg} \otimes ( L(j,\ell) \otimes \F_{NS}^{\gg}) = \bigoplus_{\stackrel{1  \le q \le m+1}{p \equiv q \lbrack 2 \rbrack} } M_{pq}^{m} \otimes  ( L(k,\ell + 2) \otimes \F_{NS}^{\gg}) 
\end{displaymath}   \end{corollary}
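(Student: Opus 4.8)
The plan is to read this off from the tensor product decomposition of Corollary~\ref{mult} by tensoring that isomorphism on the right with the fixed module $\F_{NS}^{\gg}$ and then reorganizing the three factors by associativity. Recall that Corollary~\ref{mult} is an isomorphism of $\hat{\gg}$-modules
\begin{displaymath} \F_{NS}^{\gg} \otimes L(j,\ell) \cong \bigoplus_{\stackrel{1 \le q \le m+1}{p \equiv q [2]}} M_{pq}^{m} \otimes L(k,\ell+2). \end{displaymath}
The functor $(-) \otimes \F_{NS}^{\gg}$ of tensoring on the right with the fixed $\hat{\gg}$-module $\F_{NS}^{\gg}$ sends $\hat{\gg}$-module maps to $\hat{\gg}$-module maps and commutes with direct sums, so applying it to both sides yields $(\F_{NS}^{\gg} \otimes L(j,\ell)) \otimes \F_{NS}^{\gg} \cong \bigoplus (M_{pq}^{m} \otimes L(k,\ell+2)) \otimes \F_{NS}^{\gg}$ as $\hat{\gg}$-modules.

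First I would fix how $\hat{\gg} = \hat{\gg}_{+} \ltimes \hat{\gg}_{-}$ acts on the threefold product, namely by the iterated diagonal action $X \mapsto X \otimes 1 + 1 \otimes X$. The structural point is that this diagonal is coassociative: the diagonal action on a product of three factors agrees with the diagonal action on the first two, tensored diagonally with the third. This is exactly what identifies the action produced by applying $(-) \otimes \F_{NS}^{\gg}$ on the left-hand side with the diagonal action on $\F_{NS}^{\gg} \otimes (L(j,\ell) \otimes \F_{NS}^{\gg})$ after reassociating. Since $\hat{\gg}$ acts trivially on each multiplicity space $M_{pq}^{m}$, the diagonal action on $(M_{pq}^{m} \otimes L(k,\ell+2)) \otimes \F_{NS}^{\gg}$ collapses to $M_{pq}^{m}$ tensored with the diagonal action on $L(k,\ell+2) \otimes \F_{NS}^{\gg}$, which is precisely the grouping in the statement. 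Both identifications are instances of the associativity constraint of the tensor product; I would take care that only this associativity, and never the braiding, is invoked, so that no Koszul sign from the fermionic parity of $\F_{NS}^{\gg}$ appears — indeed the ordered list of factors $(\F_{NS}^{\gg}, L(j,\ell), \F_{NS}^{\gg})$ is the same on source and target, so the comparison maps never transpose an odd factor.

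The only substantive point, rather than pure formalism, is the compatibility of the action across the regrouping in the projective setting. I would verify that the isomorphism of Corollary~\ref{mult}, equivariant for the full $\hat{\gg}$ (both the creation part $\hat{\gg}_{-}$ and its complement $\hat{\gg}_{+}$), remains equivariant after tensoring with $\F_{NS}^{\gg}$; this is automatic from coassociativity and bifunctoriality, but since the representations are projective one must check that the cocycles add consistently on the triple product, equivalently that the level of the diagonal action on $L(k,\ell+2) \otimes \F_{NS}^{\gg}$ is $\ell+4$ whether computed as $2+(\ell+2)$ on the source or as $(\ell+2)+2$ on the target. Once this level bookkeeping is confirmed, combining the functoriality step with the two associativity isomorphisms gives the claimed decomposition, and the corollary follows.
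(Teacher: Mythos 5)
Your reduction to Corollary~\ref{mult} handles only the even part $\hat{\gg}_{+}$ of the action, and the gap is precisely in the odd part $\hat{\gg}_{-}$. The $\hat{\gg}$ of the statement is the \emph{diagonal} super loop algebra inside $\hat{\gg}\oplus\hat{\gg}$ (cf.\ the Kac--Todorov remark at the end of Section 2.2): its fermionic generators act as $\psi\otimes 1\otimes 1 \pm 1\otimes 1\otimes\psi$, i.e.\ on \emph{both} copies of $\F_{NS}^{\gg}$ on the left-hand side, whereas on the right-hand side $\hat{\gg}$ acts as $I\otimes\pi_{i}$ with the fermions hitting only the single Fock factor inside the irreducible $L(k,\ell+2)\otimes\F_{NS}^{\gg}$. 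The map ``(Corollary~\ref{mult}) $\otimes\, \mathrm{id}_{\F_{NS}^{\gg}}$'' that you propose is therefore $\hat{\gg}_{+}$-equivariant but has no reason to be $\hat{\gg}_{-}$-equivariant: $\psi\otimes 1$ does not act through the decomposition $\F_{NS}^{\gg}\otimes L(j,\ell)=\bigoplus M_{pq}^{m}\otimes L(k,\ell+2)$ as anything of the form $1_{M_{pq}^{m}}\otimes(\cdot)$ (it is an odd operator transforming in the adjoint, moving between isotypic components), so the ``collapse onto the multiplicity space'' you invoke fails for the odd generators. Your worry about Koszul signs and about the level $\ell+4$ is fine but beside the point; the substantive issue is that the third factor $\F_{NS}^{\gg}$ on the right is an \emph{abstract} irreducible module of the diagonal Clifford algebra obtained from decomposing $\F_{NS}^{\gg}\otimes\F_{NS}^{\gg}$, not the literal outer tensor factor.

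What the paper actually does is redo the isotypic decomposition for the superalgebra: it invokes Proposition 4.35 and Remark 4.36 of \cite{NSOAI}, which provide complete reducibility and identify the irreducible positive energy $\hat{\gg}$-modules at the relevant level as the $L(k,\ell+2)\otimes\F_{NS}^{\gg}$; the multiplicities are then read off from characters, since the character of a $\hat{\gg}$-module is computed exactly as for the underlying $\hat{\gg}_{+}$-module, and $ch(H)=ch(\F_{NS}^{\gg})\,ch(L(j,\ell))\,ch(\F_{NS}^{\gg})=\sum F_{pq}^{m}\,ch(L(k,\ell+2)\otimes\F_{NS}^{\gg})$ by Corollary~\ref{cor}. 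To repair your argument you would need to replace the bifunctoriality step by this classification-plus-character argument (or otherwise prove directly that the two $\hat{\gg}_{-}$-actions are intertwined), since associativity of the tensor product alone cannot produce it.
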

\begin{proof}  Recall proposition 4.35 and remark 4.36 of \cite{NSOAI}.  \\ Next, the characters of $\hat{\gg}$-modules are defined as for $ \hat{\gg}_{+} $-modules.    \end{proof}  
\subsection{Coset construction} \label{52}
\subsubsection{General framework}
Let $\hh$ be a Lie $\star$-superalgebra acting unitarily on an inner product space $H$, a direct sum of irreducibles of finitely many isomorphic type $H_{i}$: 
\begin{displaymath}  H = \bigoplus_{i}  M_{i} \otimes  H_{i}   \quad \textrm{ with  $M_{i}$ the multiplicity space.}  \end{displaymath}
\begin{remark}  $\hh$ acts on $H$ as  $\pi(X) = \sum I \otimes \pi_{i}(X)$.   \end{remark}
\begin{definition}
Let  $K_{i} = Hom_{\hh}(H_{i}, H) $, \\  space of homomorphisms that supercommute with $\hh$ (graded intertwinners).
\end{definition}
\begin{reminder} 
 $Hom_{\hh}(H_{i}, H_{j}) = \delta_{ij}\CCC$, $End_{\hh}(H) = \bigoplus End(M_{i})\otimes \CCC $.
\end{reminder} 
\begin{lemma}
$K_{i}$ admits a natural inner product. 
\end{lemma}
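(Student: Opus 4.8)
The plan is to produce the inner product from Schur's lemma: for intertwiners $S,T \in K_{i}$ the composite $T^{\star}S$ will be a scalar operator on the irreducible $H_{i}$, and that scalar is declared to be $\langle S,T\rangle$. So the whole construction rests on the single identity $T^{\star}S=\langle S,T\rangle I_{H_{i}}$, and the work is to show that the right-hand side makes sense and that the resulting form is a genuine inner product.

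First I would check that the adjoint $T^{\star}\colon H \to H_{i}$ of any $T \in K_{i}=Hom_{\hh}(H_{i},H)$ is again an $\hh$-intertwiner. Starting from $\pi(X)T = T\pi_{i}(X)$ for all $X \in \hh$ and taking adjoints, the unitarity of the action (that is, $\pi(X)^{\star} = \pi(X^{\star})$, and likewise for $\pi_{i}$) gives $T^{\star}\pi(X^{\star}) = \pi_{i}(X^{\star})T^{\star}$; since $\star$ is an involution of $\hh$ this says exactly that $T^{\star}\in Hom_{\hh}(H,H_{i})$. Consequently, for $S,T\in K_{i}$ the composite $T^{\star}S$ lies in $End_{\hh}(H_{i})$, and the reminder $Hom_{\hh}(H_{i},H_{j})=\delta_{ij}\CCC$ forces $End_{\hh}(H_{i})=\CCC\, I_{H_{i}}$. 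Thus there is a unique $\langle S,T\rangle\in\CCC$ with $T^{\star}S=\langle S,T\rangle I_{H_{i}}$, which I take as the candidate form.

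It then remains to verify the axioms, which are all formal. Sesquilinearity is immediate, since composition is linear in $S$ while $T\mapsto T^{\star}$ is conjugate-linear. Conjugate symmetry follows by taking the adjoint of $T^{\star}S=\langle S,T\rangle I_{H_{i}}$, giving $S^{\star}T=\overline{\langle S,T\rangle}I_{H_{i}}$, i.e. $\langle T,S\rangle=\overline{\langle S,T\rangle}$. For positivity, pairing $S^{\star}S=\langle S,S\rangle I_{H_{i}}$ against a unit vector $v\in H_{i}$ yields $\langle S,S\rangle=\Vert Sv\Vert^{2}\ge 0$, with equality iff $S$ annihilates $H_{i}$, i.e. $S=0$; hence the form is positive-definite.

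The delicate points, rather than these computations, are the standing hypotheses behind them. I must ensure that the intertwiners are bounded, so that the adjoints $T^{\star}$ exist and Schur's lemma applies in this (a priori infinite-dimensional) positive-energy setting, and that the adjoint computation is compatible with the $\ZZZ_{2}$-grading and the sign conventions of the $\star$-superalgebra when $X$ or the intertwiners are odd — this is precisely where I rely on the reminder that the graded endomorphism space is still just $\CCC$. Finally, \emph{naturality} is the assertion that, under the canonical identification $K_{i}\cong M_{i}$ coming from $Hom_{\hh}(H_{i},M_{j}\otimes H_{j})=\delta_{ij}M_{i}$, this form agrees with the inner product that $M_{i}$ carries as a multiplicity subspace of $H$.
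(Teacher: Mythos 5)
Your proposal is correct and follows exactly the paper's route: the paper also sets $(S,T)=T^{\star}S$, observing that $T^{\star}S\in End_{\hh}(H_{i})=\CCC$ by Schur's lemma. You simply spell out the verifications (adjoints of intertwiners, sesquilinearity, positivity) that the paper leaves implicit.
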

\begin{proof} 
If $S, T \in K_{i}$, then $T^{\star}S \in End_{\hh}(H_{i}) = \CCC $, and so, $(S,T)= T^{\star}S$ \\  defines the inner product.
\end{proof}
\begin{lemma}
$\rho :   \bigoplus  K_{i} \otimes  H_{i}  \to  H $ such that: \quad  $\rho (\sum \xi_{i}\otimes \eta_{i} ) = \sum \xi_{i}( \eta_{i})$, \\ is a unitary isomorphism of $\hh$-modules.
\end{lemma}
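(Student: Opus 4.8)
The plan is to verify the three assertions in turn: that $\rho$ intertwines the $\hh$-action, that it is a bijection, and that it is an isometry. The module-map property is the quickest. I would equip $\bigoplus_i K_i \otimes H_i$ with the $\hh$-action that is trivial on each multiplicity space $K_i$ and equals $\pi_i$ on $H_i$, so that $X \cdot (\xi_i \otimes \eta_i) = \xi_i \otimes \pi_i(X)\eta_i$. Since every $\xi_i \in K_i = Hom_{\hh}(H_i, H)$ supercommutes with $\hh$, applying $\rho$ yields $\rho(\xi_i \otimes \pi_i(X)\eta_i) = \xi_i(\pi_i(X)\eta_i) = \pi(X)\xi_i(\eta_i) = \pi(X)\rho(\xi_i \otimes \eta_i)$, up to the Koszul sign when $X$ and $\xi_i$ are both odd; thus $\rho$ is a homomorphism of $\hh$-modules.

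For bijectivity I would identify $K_i$ with $M_i$. Writing $H = \bigoplus_j M_j \otimes H_j$ and using that $\hh$ acts only through the $H_j$ factor, each $v \in M_i$ yields an intertwiner $\eta \mapsto v \otimes \eta$; conversely the reminder $Hom_{\hh}(H_i, H_j) = \delta_{ij}\CCC$ (Schur's lemma) shows that every element of $K_i$ arises this way, so $K_i \cong M_i$ canonically. Under this identification the domain becomes $\bigoplus_i M_i \otimes H_i = H$ and $\rho$ is carried to the identity inclusion, hence is bijective.

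The heart of the matter is unitarity, which I would check on simple tensors and extend by sesquilinearity. For $S \in K_i$, $T \in K_j$ and vectors $\eta, \eta'$ in the respective irreducibles, the adjoint relation gives $(\rho(S \otimes \eta), \rho(T \otimes \eta'))_H = (S\eta, T\eta')_H = (T^\star S\, \eta, \eta')_{H_i}$. When $i \neq j$ this forces $T^\star S \in Hom_{\hh}(H_i, H_j) = 0$, so distinct blocks are orthogonal, matching the orthogonality of the direct sum. When $i = j$ one has $T^\star S \in End_{\hh}(H_i) = \CCC$, and by the very definition of the inner product on $K_i$, namely $(S,T)_{K_i} = T^\star S$, one gets $T^\star S = (S,T)_{K_i} I$; hence $(S\eta, T\eta')_H = (S,T)_{K_i}(\eta,\eta')_{H_i} = (S \otimes \eta, T \otimes \eta')_{K_i \otimes H_i}$. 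Thus $\rho$ preserves inner products, and being a bijective isometry it is the desired unitary isomorphism.

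The main obstacle is the bookkeeping in the isometry step: one must move $T$ (rather than $S$) across the inner product and apply the defining formula $(S,T) = T^\star S$ in the correct slot, so that the scalar produced is exactly $(S,T)_{K_i}$ and not its complex conjugate; and one must invoke Schur's lemma twice, once through $End_{\hh}(H_i) = \CCC$ for the diagonal terms and once through $Hom_{\hh}(H_i, H_j) = 0$ for $i \neq j$ to annihilate the cross terms. The graded signs entering the intertwining identity are routine but must be carried consistently.
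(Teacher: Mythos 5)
Your proof is correct and follows essentially the same route as the paper: surjectivity is obtained from the explicit intertwiners $\eta \mapsto m_i \otimes \eta$ furnished by the decomposition $H = \bigoplus M_i \otimes H_i$ together with the fact that $\hh$ acts as $\sum I \otimes \pi_i$, and the isometry is the computation $(S\eta, T\eta') = (T^{\star}S\,\eta, \eta') = (S,T)(\eta,\eta')$ via Schur's lemma, exactly as in the paper. Your explicit check of the intertwining property (with the Koszul sign) is a small addition the paper leaves implicit, but the substance of the argument is the same.
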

\begin{proof} 
Let $\sum m_{i}\otimes \eta_{i} \in H$ and $\xi_{i} :  \eta_{i} \mapsto  m_{i}\otimes \eta_{i} $, then $\xi_{i} \in K_{i}$, \\  because $\hh$ acts on $H$ as $\sum I \otimes \pi_{i}$; and so, $\rho$ is surjective. \\
Now, $(\rho (\sum \xi'_{i}\otimes \eta'_{i} )  , \rho (\sum \xi_{j}\otimes \eta_{j} )  ) = \sum( \xi'_{i}( \eta'_{i} )  ,  \xi_{j}(\eta_{j}))  =  \\ \sum (  \xi_{j}^{\star}\xi'_{i}( \eta'_{i} )  ,\eta_{j})=
\sum (  \xi_{j}^{\star} , \xi'_{i})  ( \eta'_{i}  ,\eta_{j}) = (\sum \xi'_{i}\otimes \eta'_{i}  , \sum \xi_{j}\otimes \eta_{j}  ) $   \end{proof}
\begin{remark}
An operator $A$ on $H$ which supercommutes with $\hh$, acts by definition,  on each $K_{i}$ by an $A_{i}$, and, identifying $M_{i}$ and $K_{i}$, \   $A = \sum A_{i} \otimes I $
\end{remark}
 Let $\dd$ be a Lie $\star$-superalgebra acting as $\pi (D)$ on $H$, and as $\pi_{i}(D)$ on $H_{i}$.
\begin{corollary}  \label{ballon}
If $\forall D \in \dd $,  $\sigma (D)  =  \pi (D) - \sum I \otimes \pi_{i}(D)$ supercommutes with $\hh$, then $\dd$ acts on $M_{i}$ as $\sigma_{i}(D) $ with  $\sigma (D)  = \sum \sigma_{i}(D) \otimes I $.
\end{corollary}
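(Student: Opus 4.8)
The plan is to read the result off the preceding Remark together with the tensor-factor structure of $H = \bigoplus_i M_i \otimes H_i$. First I would note that, since by hypothesis $\sigma(D)$ supercommutes with $\hh$ for every $D \in \dd$, the Remark above applies verbatim to $A = \sigma(D)$: it acts on each $K_i \cong M_i$ by an operator $\sigma_i(D)$ and decomposes as $\sigma(D) = \sum_i \sigma_i(D) \otimes I$, i.e. on the $i$-th block $M_i \otimes H_i$ it is $\sigma_i(D) \otimes I_{H_i}$. Linearity of $D \mapsto \sigma_i(D)$ is immediate from that of $\pi$ and of $T(D) := \sum_i I \otimes \pi_i(D)$, so what remains is to check that $\sigma_i$ is a projective $\star$-representation of $\dd$ on $M_i$.

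The key step is the observation that $\sigma(D)$ and $T(D')$ supercommute for all $D, D' \in \dd$. Both operators are block-diagonal for the decomposition — the first by the Remark, the second by construction — so it suffices to verify this on a single block, where $\sigma(D)$ acts as $\sigma_i(D) \otimes I_{H_i}$ and $T(D')$ as $I_{M_i} \otimes \pi_i(D')$. Operators of the shape $A \otimes I$ and $I \otimes B$ supercommute in the $\ZZZ/2$-graded tensor product, since the two sign factors produced by the Koszul rule cancel; this is the one place where the grading has to be handled with care, and I regard it as the main (though modest) obstacle.

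With that in hand I would write $\pi(D) = \sigma(D) + T(D)$ and expand the graded bracket. The two cross terms $[\sigma(D), T(D')]$ and $[T(D), \sigma(D')]$ vanish by the previous step, leaving $[\pi(D), \pi(D')] = [\sigma(D), \sigma(D')] + [T(D), T(D')]$. Since $\pi$ is a projective representation of $\dd$ on $H$ and $T = \sum_i I \otimes \pi_i$ is one as well (each $\pi_i$ being a projective representation of $\dd$ on $H_i$), both right-hand brackets have the form $\pi([D,D']) + \omega(D,D')I$ and $T([D,D']) + \omega'(D,D')$ with scalar, resp. block-scalar, cocycles. Subtracting and restricting to the $i$-th block yields $[\sigma_i(D), \sigma_i(D')] = \sigma_i([D,D']) + (\omega(D,D') - \omega_i(D,D'))I$, so $\dd$ indeed acts on $M_i$ through a projective representation $\sigma_i$ whose cocycle is the difference of those of $\pi$ and of $\pi_i$, exactly as announced. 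Finally, because $\pi$ and each $\pi_i$ (hence $T$) are $\star$-representations, $\sigma(D)^\star = \pi(D^\star) - T(D^\star) = \sigma(D^\star)$, and restricting to a block gives $\sigma_i(D)^\star = \sigma_i(D^\star)$, completing the verification.
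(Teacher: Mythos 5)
Your first paragraph is exactly the paper's (implicit) proof: the corollary is stated without proof precisely because it is the preceding Remark applied to $A=\sigma(D)$, which is what you do. The remaining two paragraphs are correct but prove more than the statement asks --- the projectivity, the cocycle difference and the $\star$-compatibility are the content of the paper's subsequent Proposition \ref{cocycle}, and your argument there (writing $\pi=\sigma+T$, killing the cross terms via the Koszul sign cancellation of Lemma \ref{sst}, and subtracting cocycles blockwise) coincides with the paper's.
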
 
\begin{definition}  Let   $B_{F} (D_{1}, D_{2}) := [\pi_{F}(D_{1}), \pi_{F}(D_{2}) ]  - \pi_{F}  [D_{1}, D_{2} ] $.    \end{definition} 
 
 \begin{remark} If $F$ is  unitary, projective and positive energy  (see definition \ref{proj}), 
 the cocycle $b_{F}$ is defined by  $B_{F} (D_{1}, D_{2})= b_{F} (D_{1}, D_{2})I_{F}$.   \end{remark}
\begin{proposition} \label{cocycle}If in addition to corollary \ref{ballon}, $\pi $ and $\pi_{i}$ are unitary, projective, positive energy representations, then, so is $\sigma_{i}$, and the cocycle of $\dd$ on $M_{i}$ is the difference of the cocycles on $H$ and on $H_{i}$.
\end{proposition}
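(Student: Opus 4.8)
The plan is to introduce the auxiliary operator $\tau(D) := \sum_i I \otimes \pi_i(D)$ acting on $H$, so that by definition $\sigma(D) = \pi(D) - \tau(D)$, i.e. $\pi = \sigma + \tau$. Note that $\tau$ preserves each summand $M_i \otimes H_i$, acting there as $I_{M_i} \otimes \pi_i(D)$, while by corollary \ref{ballon} the operator $\sigma(D) = \sum_i \sigma_i(D) \otimes I_{H_i}$ preserves it too, acting as $\sigma_i(D) \otimes I_{H_i}$. First I would record unitarity: since $\pi$ and each $\pi_i$ are unitary, $\tau(D)^{\star} = \sum_i I \otimes \pi_i(D^{\star}) = \tau(D^{\star})$, hence $\sigma(D)^{\star} = \pi(D^{\star}) - \tau(D^{\star}) = \sigma(D^{\star})$, and restricting to a summand gives $\sigma_i(D)^{\star} = \sigma_i(D^{\star})$.

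The key step is a supercommutation lemma: $\sigma(D_1)$ and $\tau(D_2)$ supercommute for all $D_1, D_2 \in \dd$. Indeed, on each summand they are $\sigma_i(D_1) \otimes I_{H_i}$ and $I_{M_i} \otimes \pi_i(D_2)$, operators supported on complementary tensor factors, for which (with parities $|D_1|, |D_2|$) one has $(I \otimes \pi_i(D_2))(\sigma_i(D_1) \otimes I) = (-1)^{|D_1||D_2|}(\sigma_i(D_1) \otimes I)(I \otimes \pi_i(D_2))$; so their graded commutator vanishes. The hard part of the whole proof is precisely this bookkeeping of graded signs; everything else is bilinear expansion and restriction to summands.

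Granting the lemma, I would expand $B_{\pi}(D_1, D_2) = [\sigma(D_1) + \tau(D_1), \, \sigma(D_2) + \tau(D_2)] - \sigma([D_1, D_2]) - \tau([D_1, D_2])$; the two cross commutators $[\sigma(D_1), \tau(D_2)]$ and $[\tau(D_1), \sigma(D_2)]$ drop out, leaving the additivity $B_{\pi} = B_{\sigma} + B_{\tau}$. Next I evaluate $B_{\tau}$: by bilinearity $B_{\tau}(D_1, D_2) = \sum_i I \otimes B_{\pi_i}(D_1, D_2) = \sum_i b_{\pi_i}(D_1, D_2)\, I_{M_i \otimes H_i}$, scalar on each summand since every $\pi_i$ is projective; similarly $B_{\pi} = b_{\pi} I_H$. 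Restricting $B_{\pi} = B_{\sigma} + B_{\tau}$ to $M_i \otimes H_i$ therefore gives $B_{\sigma} = (b_{\pi} - b_{\pi_i}) I$ there, while $B_{\sigma} = \sum_i B_{\sigma_i} \otimes I_{H_i}$ by corollary \ref{ballon}. Comparison forces $B_{\sigma_i}(D_1, D_2) = (b_{\pi} - b_{\pi_i})(D_1, D_2)\, I_{M_i}$, so $\sigma_i$ is projective with cocycle $b_{\sigma_i} = b_{\pi} - b_{\pi_i}$, the difference of the cocycles on $H$ and on $H_i$.

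It remains to check positive energy. On $M_i \otimes H_i$ the energy operator reads $\pi(L_0) = \sigma_i(L_0) \otimes I + I \otimes \pi_i(L_0)$. Let $h_i$ be the bottom eigenvalue of the positive-energy operator $\pi_i(L_0)$; restricting to $M_i$ tensored with the corresponding lowest eigenspace identifies $\sigma_i(L_0)$, up to the constant $h_i$, with a restriction of $\pi(L_0)$. Since $\pi(L_0)$ has spectrum bounded below with finite-dimensional eigenspaces, so does $\sigma_i(L_0)$: each of its eigenspaces embeds into a finite-dimensional $\pi(L_0)$-eigenspace. Hence $\sigma_i$ is a unitary, projective, positive energy representation of $\dd$ on $M_i$, which completes the proof.
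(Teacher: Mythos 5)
Your proposal is correct and follows essentially the same route as the paper: the paper's proof is the one-line identity $B_{H}=\sum\left(I\otimes B_{H_{i}}+B_{M_{i}}\otimes I\right)$ restricted to a summand, which is exactly your decomposition $B_{\pi}=B_{\sigma}+B_{\tau}$ with the graded cross-commutators vanishing (the paper's lemma \ref{sst}). You additionally spell out the unitarity and positive-energy checks that the paper leaves implicit, which is a welcome completion rather than a divergence.
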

\begin{proof}
$\pi = \sum (I \otimes \pi_{i}+ \sigma_{i}\otimes I )$ and  $B_{H} = \sum(  I \otimes B_{H_{i}}   + B_{M_{i}}  \otimes I  )  $.  \\
    $M_{i}\otimes H_{i}  \subset H$, so, $b_{H}I  = b_{M_{i} \otimes H_{i}}I =  I \otimes B_{H_{i}}   + B_{M_{i}}  \otimes I$. \\
 Finally,   $  B_{M_{i}} \otimes I = b_{H}I-   I \otimes B_{H_{i}}  =  ( b_{H} -  b_{H_{i}} ) I \otimes I$  \end{proof}
\subsubsection{Application}
We apply the previous result to corollary \ref{mult56} with  $\hh = \hat{\gg}$ and $\dd = \WW_{1/2}$.

 \begin{convention} To have a graded Lie bracket coherent with tensor product, we need to introduce the following convention:  let $A$, $B$ be superalgebras, then, the product on $A \otimes B$ is defined as follows: 
  \begin{center}$ (a\otimes b ).(c \otimes d) = (-1)^{\varepsilon(b)\varepsilon(c)}ac \otimes bd$  \ with  $\varepsilon(b)$ ,   $\varepsilon(c) \in \ZZZ_{2} $ \end{center}  \end{convention}
  \begin{lemma} \label{sst} Let $\tt$ be a Lie superalgebra, then, by the previous convention:  
  \begin{center} $[ X \otimes I + I \otimes X  , Y \otimes I + I \otimes Y  ]_{\varepsilon} = [X,Y]_{\varepsilon} \otimes I + I\otimes [X,Y]_{\varepsilon}  $ \end{center}  
   \end{lemma}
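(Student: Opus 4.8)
The plan is to treat this as a direct verification that the diagonal map $X \mapsto X \otimes I + I \otimes X$ respects the graded bracket, the whole point being that the Koszul sign built into the preceding convention forces the ``cross'' terms to cancel. First I would reduce to the case of homogeneous $X, Y \in \tt$ by bilinearity of both sides, writing $x := \varepsilon(X)$ and $y := \varepsilon(Y)$ in $\ZZZ_2$. Since $\varepsilon(I) = 0$, both $X \otimes I$ and $I \otimes X$ are homogeneous of degree $x$ (and likewise for $Y$), so the graded commutator $[\, \cdot\, ,\, \cdot\, ]_{\varepsilon}$ is defined on them and I may expand the left-hand side by bilinearity into the four brackets $[X \otimes I, Y \otimes I]_{\varepsilon}$, $[X \otimes I, I \otimes Y]_{\varepsilon}$, $[I \otimes X, Y \otimes I]_{\varepsilon}$ and $[I \otimes X, I \otimes Y]_{\varepsilon}$.

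Next I would compute the two diagonal brackets. For $[X \otimes I, Y \otimes I]_{\varepsilon}$, the convention gives $(X \otimes I)(Y \otimes I) = XY \otimes I$ and $(Y \otimes I)(X \otimes I) = YX \otimes I$, the sign $(-1)^{\varepsilon(I)\varepsilon(Y)}$ being trivial because $\varepsilon(I)=0$; hence this bracket equals $(XY - (-1)^{xy} YX) \otimes I = [X,Y]_{\varepsilon} \otimes I$. The symmetric computation, where now the trivial sign comes from $\varepsilon(I)\varepsilon(X)=0$, yields $[I \otimes X, I \otimes Y]_{\varepsilon} = I \otimes [X,Y]_{\varepsilon}$. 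These two terms produce exactly the claimed right-hand side.

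The heart of the argument — and the only place where the sign convention genuinely intervenes — is the vanishing of the two cross brackets. For $[X \otimes I, I \otimes Y]_{\varepsilon}$ I would compute $(X \otimes I)(I \otimes Y) = X \otimes Y$ and $(I \otimes Y)(X \otimes I) = (-1)^{xy} X \otimes Y$, so that the bracket is $X \otimes Y - (-1)^{xy}(-1)^{xy} X \otimes Y = (1 - (-1)^{2xy}) X \otimes Y$, which vanishes since $2xy$ is even; the analogous computation gives $[I \otimes X, Y \otimes I]_{\varepsilon} = 0$. Summing the four contributions yields $[X,Y]_{\varepsilon} \otimes I + I \otimes [X,Y]_{\varepsilon}$, as required. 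I do not expect a genuine obstacle: the calculation is routine once the degrees are tracked, and the single subtle point is recognizing that $(-1)^{2xy} = 1$ in $\ZZZ_2$, which is precisely what makes the interaction terms disappear and is the reason the convention was introduced.
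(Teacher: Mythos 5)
Your proof is correct: the four-term expansion, the two diagonal brackets giving $[X,Y]_{\varepsilon}\otimes I$ and $I\otimes [X,Y]_{\varepsilon}$, and the cancellation of the cross terms via the Koszul sign $(-1)^{\varepsilon(b)\varepsilon(c)}$ from the convention is exactly the intended verification. The paper states this lemma without any proof, treating it as immediate from the convention, so your computation simply makes explicit the routine check the author leaves to the reader.
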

\begin{corollary}
The Witt superalgebra $\WW_{1/2}$ acts on the multiplicity space $M_{pq}^{m} $ as  unitary, projective and positive energy representation, with central charge, 
 \begin{displaymath}  c_{M_{pq}^{m} } = \frac{dim(\gg)}{2} (1 - \frac{ 2g^{2}}{(\ell + g)(\ell + 2g) }  ) = \frac{3}{2} (1 - \frac{ 8}{m(m+2) }  )  
 \end{displaymath}
 $m = \ell + 2 $, $ g = 2$ and $dim(\gg) = 3$.
\end{corollary}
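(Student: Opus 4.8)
The plan is to apply Corollary \ref{ballon} and Proposition \ref{cocycle} to the decomposition of Corollary \ref{mult56}, taking $\hh = \hat{\gg}$ and $\dd = \WW_{1/2}$. Thus $H = \F_{NS}^{\gg} \otimes (L(j,\ell) \otimes \F_{NS}^{\gg})$, the irreducible type is $H_{i} = L(k,\ell+2) \otimes \F_{NS}^{\gg}$, and $M_{pq}^{m}$ is the multiplicity space. On both $H$ and $H_{i}$ the Neveu-Schwarz generators are realized by the super-Sugawara construction of \cite{NSOAI}: the odd field $G$ is built from the fermions of $\F_{NS}^{\gg}$ paired with the affine currents, and the even field $L$ is the associated super-Sugawara tensor. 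Writing $\pi$ for the resulting representation of $\WW_{1/2}$ on $H$ and $\pi_{i}$ for the one on $H_{i}$, both are unitary, projective, positive-energy representations. The hypothesis of Corollary \ref{ballon} that remains to be checked is that
\begin{displaymath} \sigma(D) = \pi(D) - \sum I \otimes \pi_{i}(D) \end{displaymath}
supercommutes with $\hat{\gg}$ for every $D \in \WW_{1/2}$; the unitarity and positive-energy properties of $\pi, \pi_{i}$ supply the extra hypotheses needed by Proposition \ref{cocycle}.

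I expect this supercommutation to be the main obstacle, as it is the heart of the Goddard-Kent-Olive mechanism. The idea is that the numerator tensor $\pi(D)$ and the denominator tensor $\sum I\otimes\pi_{i}(D)$ implement the \emph{same} (super)derivation on the diagonal currents $J^{a}_{n}$ of $\hat{\gg}$ (which on $H$ combine the two fermionic currents of the $\F_{NS}^{\gg}$-factors with the $L(j,\ell)$-current), since both are assembled covariantly out of these currents; hence their difference $\sigma(D)$ supercommutes with each $J^{a}_{n}$, the numerator and denominator contributions cancelling bracket by bracket. Concretely I would expand $[J^{a}_{n}, \pi(L_{m})]$ and $[J^{a}_{n}, \pi(G_{r})]_{\varepsilon}$, together with the corresponding brackets for $\pi_{i}$, using the explicit mode formulas of \cite{NSOAI} and organizing the diagonal action through Lemma \ref{sst}, and verify the cancellation.

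Granting this, Proposition \ref{cocycle} yields at once that $\WW_{1/2}$ acts on each $M_{pq}^{m}$ as a unitary, projective, positive-energy representation, and that its central charge is the difference $c_{M_{pq}^{m}} = c_{H} - c_{H_{i}}$ of the values carried on $H$ and on $H_{i}$. It then remains only to evaluate this difference. On $H_{i} = L(k,\ell+g) \otimes \F_{NS}^{\gg}$ the super-Sugawara charge is the bosonic Sugawara value $\frac{(\ell+g)\dim(\gg)}{\ell+2g}$ plus one free-fermion contribution $\frac{\dim(\gg)}{2}$, so $c_{H_{i}} = \frac{(\ell+g)\dim(\gg)}{\ell+2g} + \frac{\dim(\gg)}{2}$; on $H$, which carries the level-$\ell$ Sugawara tensor together with \emph{two} copies of $\F_{NS}^{\gg}$, one gets $c_{H} = \frac{\ell\,\dim(\gg)}{\ell+g} + \dim(\gg)$, using Proposition \ref{ch2} for the free-fermion charge $\frac{\dim(\gg)}{2}$. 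Subtracting, the bosonic parts give $\frac{\ell\,\dim(\gg)}{\ell+g} - \frac{(\ell+g)\dim(\gg)}{\ell+2g} = -\frac{g^{2}\dim(\gg)}{(\ell+g)(\ell+2g)}$ while the fermionic parts give $\dim(\gg) - \frac{\dim(\gg)}{2} = \frac{\dim(\gg)}{2}$, whence
\begin{displaymath} c_{M_{pq}^{m}} = \frac{\dim(\gg)}{2}\left(1 - \frac{2g^{2}}{(\ell+g)(\ell+2g)}\right) = \frac{3}{2}\left(1 - \frac{8}{m(m+2)}\right), \end{displaymath}
as claimed, after substituting $\dim(\gg)=3$, $g=2$ and $m = \ell+2$.
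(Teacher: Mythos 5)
Your proposal follows essentially the same route as the paper: it applies Corollary \ref{ballon} and Proposition \ref{cocycle} to the decomposition of Corollary \ref{mult56} with $\hh = \hat{\gg}$, $\dd = \WW_{1/2}$, and computes the central charge as the difference $c_{\F_{NS}^{\gg}\otimes(L(j,\ell)\otimes\F_{NS}^{\gg})} - c_{L(k,\ell+2)\otimes\F_{NS}^{\gg}}$, with the same bosonic-plus-fermionic bookkeeping. The one step you flag as "the main obstacle" --- that $\sigma(D)$ supercommutes with $\hat{\gg}$ --- is not re-derived in the paper either; it is quoted from the super-Sugawara results of the companion paper \cite{NSOAI}, so your sketch of the bracket-by-bracket cancellation is consistent with, rather than divergent from, the intended argument.
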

\begin{proof}
$\WW_{1/2}$ acts as $\sum I \otimes X$ on $\bigoplus M_{pq}^{m} \otimes  (L(k,\ell + 2) \otimes \F_{NS}^{\gg})$, as $X \otimes I + I \otimes X $ on $\F_{NS}^{\gg}\otimes( L(j,\ell) \otimes \F_{NS}^{\gg}))$, it's projective thanks to  lemma \ref{sst}, unitary,  positive energy,  and their difference supercommutes with $\hat{\gg}$ by proposition \ref{susybf}.  
Now by proposition \ref{cocycle}: 
\begin{center}
 $ c_{M_{pq}^{m}} =c_{\F_{NS}^{\gg}\otimes (L(j,\ell) \otimes \F_{NS}^{\gg}) } - ( c_{L(k,\ell + 2) \otimes \F_{NS}^{\gg}) }  ) 
  = c_{\F_{NS}^{\gg} } +  c_{L(j,\ell)  } +c_{\F_{NS}^{\gg} } -  (c_{L(k,\ell + 2) }+c_{\F_{NS}^{\gg} })
  =\frac{3}{2}  \cdot \frac{\ell +\t1 g}{\ell + g} dim(\gg) -  \frac{\ell + g}{\ell + 2g} dim(\gg)$ \end{center} \end{proof} 
  \begin{remark} Let $\hat{\gg} \subset \hat{\gg} \oplus \hat{\gg}$ be the diagonal inclusion, then the previous construction is equivalent to the Kac-Todorov one \cite{8b}: the coset action of $\Vir_{1/2}$ is given by   $L^{\hat{\gg} \oplus \hat{\gg}}_{n} - L^{ \hat{\gg}}_{n} $ and $G^{\hat{\gg} \oplus \hat{\gg}}_{r} - G^{ \hat{\gg}}_{r} $. There exists also an manner to write this action only with ordinary loop algebra, due to Goddard,  Kent, Olive \cite{3a} (used and discussed in \cite{NSOAIII} section 2.7).   \end{remark}

\subsection{Character of the multiplicity space}  \label{chari}

\begin{definition}  \label{formidable}
$ \Vir_{1/2}$-module's character is  $ch(H)(t) = tr( t^{L_{0}- \frac{C}{24}}) $.

 \end{definition}
\begin{corollary} (Character of the multiplicity space) \label{space}
\begin{displaymath} ch(M_{pq}^{m})(t) = t^{-\frac{c(m)}{24}}.\chi_{NS} (t).\Gamma_{pq}^{m} (t)  \ \  \textrm{with}    \end{displaymath}
\begin{displaymath} \Gamma^{m}_{pq}(t) = \sum_{n \in \ZZZ}(t^{\gamma^{m}_{pq}(n)}-t^{\gamma^{m}_{-pq}(n)} ),  \ \ \ \chi_{NS}(t) = \prod_{n \in \NNN^{\star}}\frac{1+t^{n-1/2}}{1-t^{n}}  \ \  \textrm{and} \end{displaymath}
\begin{displaymath} \gamma^{m}_{pq}(n) =  \frac{[2m(m+2)n-(m+2)p+mq]^{2}-4}{8m(m+2)} \end{displaymath} 
\end{corollary}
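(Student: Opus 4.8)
The plan is to derive the character of the multiplicity space $M_{pq}^{m}$ by combining the tensor product decomposition of Corollary \ref{mult56} with the explicit character computations already established. The starting point is the identity
\begin{displaymath}
ch(\F_{NS}^{\gg} \otimes (L(j,\ell) \otimes \F_{NS}^{\gg})) = \sum_{\stackrel{1 \le q \le m+1}{p \equiv q [2]}} ch(M_{pq}^{m}) \cdot ch(L(k,\ell+2) \otimes \F_{NS}^{\gg}),
\end{displaymath}
which follows from Corollary \ref{mult56} upon taking $\Vir_{1/2}$-characters in the variable $t$ (the $\hat\gg$-module structure forces the multiplicity spaces to carry the $\Vir_{1/2}$-action coming from the coset construction). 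The left-hand side factors as $ch(\F_{NS}^{\gg}) \cdot ch(L(j,\ell) \otimes \F_{NS}^{\gg})$, and each tensor factor involving $\F_{NS}^{\gg}$ contributes a copy of $t^{-1/16}\chi_{NS}(t)\,\theta(t,z)$ by Proposition \ref{ch2}, once we restore the $z$-variable.

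First I would work at the level of the full two-variable characters $ch(H)(t,z)$ rather than setting $z=1$ prematurely, so that the theta-function bookkeeping of Proposition \ref{prod} remains available. Applying Proposition \ref{ch2} and the product formula to $ch(\F_{NS}^{\gg}) \cdot ch(L(j,\ell))$ reproduces exactly the computation in Corollary \ref{cor}, giving the coefficients $F_{pq}^{m}(t)$ in front of $ch(L(k,\ell+2))$. The key observation is that the extra tensor factor $\F_{NS}^{\gg}$ appears symmetrically on both sides: on the left we have one additional $\F_{NS}^{\gg}$, and on the right each $L(k,\ell+2)$ is tensored with $\F_{NS}^{\gg}$. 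Dividing through by the common factor $ch(\F_{NS}^{\gg})$, the multiplicity-space character $ch(M_{pq}^{m})(t)$ is read off as precisely $F_{pq}^{m}(t)$ up to the normalization coming from the central-charge shift.

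The remaining step is to reconcile the exponent $\alpha_{pq}^{m}(n)$ appearing in $F_{pq}^{m}$ with the exponent $\gamma_{pq}^{m}(n)$ claimed in the statement, and to account for the change of the $t^{L_0 - C/24}$ grading. Since $C$ acts on $M_{pq}^{m}$ by the scalar $c_m = \frac{3}{2}(1 - \frac{8}{m(m+2)})$ computed in the preceding corollary, whereas the $\F_{NS}^{\gg}$ factors carry central charge $3/2$, the $t^{-C/24}$ prefactors do not cancel cleanly: the residual shift converts $\alpha_{pq}^{m}(n)$ into $\gamma_{pq}^{m}(n) = \alpha_{pq}^{m}(n) - \frac{1}{2m(m+2)}$, which is exactly the difference $\frac{-4}{8m(m+2)}$ visible between the two formulas. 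I would verify this by a direct comparison of the two quadratic-in-$n$ expressions, checking that the constant terms differ by precisely the amount dictated by $\frac{3}{2} - c_m$ distributed across the grading.

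The main obstacle I expect is the careful tracking of the central-charge normalizations through the coset construction: one must ensure that the $t^{-c_m/24}$ factor attached to $M_{pq}^{m}$ is consistent with the additivity of cocycles established in Proposition \ref{cocycle}, so that the $-1/16$ shifts from the three $\F_{NS}^{\gg}$ factors combine correctly with the $\hat\gg$-module shifts to leave exactly the $-4$ correction in the numerator of $\gamma_{pq}^{m}$. This is a bookkeeping hazard rather than a conceptual one, but it is where an off-by-a-constant error would most easily creep in; the rest of the argument is a formal manipulation of the theta-function identities already proved.
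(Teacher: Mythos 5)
Your proposal is correct and follows essentially the same route as the paper: the paper's proof is precisely to combine Corollary \ref{cor} with the tensor product decomposition (Corollaries \ref{mult}, \ref{mult56}), identify $ch(M_{pq}^{m})$ with the coefficient $F_{pq}^{m}$, and rewrite $t^{-1/16}t^{\alpha_{pq}^{m}(n)}$ as $t^{-c_{m}/24}t^{\gamma_{pq}^{m}(n)}$ via $\gamma_{pq}^{m}(n)=\alpha_{pq}^{m}(n)-\frac{1}{16}+\frac{c_{m}}{24}$, which is exactly your shift $-\frac{1}{2m(m+2)}=\frac{-4}{8m(m+2)}$. Your bookkeeping of the central-charge normalization checks out, since $-\frac{1}{16}+\frac{c_{m}}{24}=-\frac{3/2-c_{m}}{24}=-\frac{1}{2m(m+2)}$.
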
  
\begin{proof}
It follows by corollaries \ref{cor}, \ref{mult}, and,   $\gamma_{pq}^{m}(n) =  \alpha_{pq}^{m}(n) - \frac{1}{16} + \frac{c_{m}}{24}$. 
\end{proof}
\begin{lemma}
The lowest eigenvalue of $L_{0}$ on $M_{pq}^{m}$  is: 
 \begin{displaymath}  h=h_{pq}^{m} = \frac{[(m+2)p-mq]^{2}-4}{8m(m+2)}
\end{displaymath} 
\end{lemma}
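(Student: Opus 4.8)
The plan is to read off the lowest $L_0$-eigenvalue directly from the character formula just established in Corollary \ref{space}. Recall that
\begin{displaymath}
ch(M_{pq}^{m})(t) = t^{-c_m/24}.\chi_{NS}(t).\Gamma^{m}_{pq}(t),
\end{displaymath}
where $\chi_{NS}(t) = \prod_{n \in \NNN^{\star}}(1+t^{n-1/2})/(1-t^{n})$ has constant term $1$ (its lowest-order term is $t^0$), so the lowest power of $t$ appearing in $ch(M_{pq}^{m})$ is governed entirely by the lowest power of $t$ in $t^{-c_m/24}.\Gamma^{m}_{pq}(t)$. Since the definition says $ch(H)(t) = tr(t^{L_0 - C/24})$, the lowest exponent of $t$ equals $h - c_m/24$, where $h$ is the lowest eigenvalue of $L_0$; hence I just need to identify the lowest power of $t$ in $\Gamma^{m}_{pq}(t)$ and add $-c_m/24$ back.

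First I would analyze $\Gamma^{m}_{pq}(t) = \sum_{n \in \ZZZ}(t^{\gamma^{m}_{pq}(n)} - t^{\gamma^{m}_{-pq}(n)})$ by minimizing the exponent $\gamma^{m}_{pq}(n)$ over $n \in \ZZZ$. Writing
\begin{displaymath}
\gamma^{m}_{pq}(n) = \frac{[2m(m+2)n - (m+2)p + mq]^2 - 4}{8m(m+2)},
\end{displaymath}
this is a quadratic in $n$ minimized when the squared bracket is smallest. The key observation is that under the index constraints $1 \le p \le m-1$ and $1 \le q \le m+1$, the quantity $(m+2)p - mq$ lies strictly between $-2m(m+2)$ and $2m(m+2)$, so its absolute value is less than the lattice spacing $2m(m+2)$; therefore the bracket $2m(m+2)n - (m+2)p + mq$ attains its minimum absolute value precisely at $n=0$, giving squared value $[(m+2)p - mq]^2$. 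The same range estimate shows the $-pq$ series has its minimal exponent no smaller, and I would check that the two series do not cancel at this lowest order (the coefficient of the minimal power stays $+1$), so that the lowest surviving power of $t$ in $\Gamma^{m}_{pq}$ is exactly $\gamma^{m}_{pq}(0) = ([(m+2)p-mq]^2 - 4)/(8m(m+2))$.

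Combining these two steps, the lowest exponent of $t$ in $ch(M_{pq}^{m})(t)$ is $-c_m/24 + \gamma^{m}_{pq}(0)$, and equating this with $h - c_m/24$ yields
\begin{displaymath}
h = \gamma^{m}_{pq}(0) = \frac{[(m+2)p - mq]^2 - 4}{8m(m+2)},
\end{displaymath}
which is exactly the claimed $h_{pq}^{m}$. The main obstacle is the combinatorial bookkeeping in the middle step: one must verify carefully that $n=0$ genuinely minimizes $\gamma^{m}_{pq}$ over \emph{both} the $+pq$ and $-pq$ families under the given ranges of $p$ and $q$, and that no cancellation wipes out the lowest term. This reduces to the elementary inequality $|(m+2)p - mq| < 2m(m+2)$, which follows from $1 \le p \le m-1$ and $1 \le q \le m+1$; once that is in hand the rest is a direct reading of exponents.
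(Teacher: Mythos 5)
Your proposal is correct and takes essentially the same route as the paper, which likewise reads the lowest $L_{0}$-eigenvalue off the character of Corollary \ref{space} by noting that $\chi_{NS}(t)\sim 1+t^{1/2}$ and that $\min\{\gamma^{m}_{pq}(n),\gamma^{m}_{-pq}(n):n\in\ZZZ\}=\gamma^{m}_{pq}(0)$ (the paper's strict ordering $\gamma^{m}_{-pq}(0)=\gamma^{m}_{pq}(0)+pq/2$, $\gamma^{m}_{-pq}(-1)=\gamma^{m}_{pq}(0)+p'q'/2$ is what rules out the cancellation you worry about). One imprecision: knowing $|(m+2)p-mq|<2m(m+2)$ does not by itself force the minimum at $n=0$ --- you need the bracket below \emph{half} the lattice spacing, i.e. $|(m+2)p-mq|<m(m+2)$; fortunately the ranges $1\le p\le m-1$, $1\le q\le m+1$ give $|(m+2)p-mq|\le m^{2}-2<m(m+2)$, so the argument goes through once stated with the correct threshold.
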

\begin{proof}
$\chi_{NS} (t) \sim 1+t^{\1/2} $ and $min \{ \gamma^{m}_{pq}(n) , \gamma^{m}_{-pq}(n), n \in \ZZZ \} = \gamma^{m}_{pq}(0)=h_{p,q}^{m}$ 
\end{proof}
\begin{lemma}  \label{lem}
Let $(p',q') = (m-p, m+2-q)$, then:
\begin{displaymath}   ch( M_{pq}^{m}   )  \sim t^{ - \frac{c_{m}}{24} }. \chi_{NS}(t).t^{h_{pq}^{m}  }.(1- t^{\frac{pq}{2}} - t^{\frac{p'q'}{2}})
\end{displaymath}
\end{lemma}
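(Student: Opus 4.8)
The plan is to reduce the statement to pure bookkeeping on the exponents of the theta-type sum $\Gamma^m_{pq}$. By Corollary \ref{space} we have exactly $ch(M^m_{pq})=t^{-c_m/24}\chi_{NS}(t)\Gamma^m_{pq}(t)$ with $\Gamma^m_{pq}(t)=\sum_{n\in\ZZZ}\big(t^{\gamma^m_{pq}(n)}-t^{\gamma^m_{-pq}(n)}\big)$, and since $\chi_{NS}(t)$ is an exact common prefactor with constant term $1$, the assertion is equivalent to showing that the three lowest-order terms of $\Gamma^m_{pq}$ are precisely $t^{h^m_{pq}}-t^{h^m_{pq}+pq/2}-t^{h^m_{pq}+p'q'/2}$, every remaining term carrying strictly larger exponent. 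So the whole proof amounts to locating the smallest values of $\gamma^m_{\pm p,q}(n)$.

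First I would set $A=m(m+2)$, $a=(m+2)p-mq$, and record, by expanding the squares, the identities $\gamma^m_{pq}(n)-h^m_{pq}=\frac{1}{2}n(An-a)$ and, writing $b=(m+2)p+mq$ so that $b-a=2mq$ and $b+a=2(m+2)p$, $\gamma^m_{-pq}(n)-h^m_{pq}=\frac{1}{2A}(An+mq)(An+(m+2)p)$. Evaluating the second at $n=0$ and $n=-1$ and factoring gives the two correction exponents: $\gamma^m_{-pq}(0)-h^m_{pq}=pq/2$, and, via $q-m-2=-q'$ and $p-m=-p'$, also $\gamma^m_{-pq}(-1)-h^m_{pq}=p'q'/2$. (In passing, $(p',q')$ has $a$-value $-a$, so $h^m_{p'q'}=h^m_{pq}$, which explains the symmetric role of the two terms.)

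The heart of the proof — and the step I expect to cost the most care — is to check that no other index produces a comparably small exponent; here the hypotheses $1\le p\le m-1$, $1\le q\le m+1$ (hence $1\le p'\le m-1$, $1\le q'\le m+1$ and $pq,p'q'\le m^2-1$) are essential. For the positive-sign terms, $g(n):=An^2-an$ is an upward parabola whose vertex $a/(2A)$ lies in $(-\frac{1}{2},\frac{1}{2})$ because $|a|\le m^2-2<A$, so its minimum over nonzero integers occurs at $n=\pm1$, where $A-a=(m+2)p'+mq$ and $A+a=(m+2)p+mq'$; the four positivity identities $A-a-pq=p'(m+2+q)$, $A-a-p'q'=q(m+p')$, $A+a-pq=q'(m+p)$, $A+a-p'q'=p(m+2+q')$ then show $\min(A-a,A+a)=A-|a|>\max(pq,p'q')$. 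For the negative-sign terms, the parabola $(An+mq)(An+(m+2)p)$ has both roots $-q/(m+2)$ and $-p/m$ strictly inside $(-1,0)$, so it is positive at every integer and its two smallest integer values occur exactly at $n=0$ and $n=-1$; at any other integer both factors exceed $A$ in modulus, whence $\gamma^m_{-pq}(n)-h^m_{pq}>A/2=m(m+2)/2>(m^2-1)/2\ge\max(pq/2,p'q'/2)$.

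Putting the two estimates together, the only exponents not exceeding $h^m_{pq}+\max(pq/2,p'q'/2)$ are the three listed, carrying signs $+,-,-$; this is exactly the claimed asymptotic for $\Gamma^m_{pq}(t)$, and restoring the exact factor $t^{-c_m/24}\chi_{NS}(t)$ yields the lemma. As a sanity check, the lowest-order term $t^{h^m_{pq}}$ recovers the preceding lemma on the minimal $L_0$-eigenvalue, so the two statements are automatically consistent.
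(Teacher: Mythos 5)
Your proof is correct and follows the same route as the paper: establish $\gamma^{m}_{-pq}(0)-h^{m}_{pq}=pq/2$ and $\gamma^{m}_{-pq}(-1)-h^{m}_{pq}=p'q'/2$, then check that $\gamma^{m}_{pq}(0)$, $\gamma^{m}_{-pq}(0)$, $\gamma^{m}_{-pq}(-1)$ are the three smallest elements of $\{\gamma^{m}_{pq}(n),\gamma^{m}_{-pq}(n)\}$. The only difference is that the paper merely asserts the minimality claim, whereas you verify it via the parabola/positivity identities $A\mp a-pq$, $A\mp a-p'q'>0$ under the stated ranges of $p,q$ --- a worthwhile addition, and your computations check out.
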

\begin{proof}$\gamma^{m}_{-pq}(0) = \gamma^{m}_{pq}(0) + \frac{pq}{2}$, $\gamma^{m}_{-pq}(-1) = \gamma^{m}_{pq}(0) + \frac{p'q'}{2}$; and,
$\gamma^{m}_{pq}(0)$, $\gamma^{m}_{-pq}(0)$, $\gamma^{m}_{-pq}(-1)$ are the three lowest numbers of  $\{ \gamma^{m}_{pq}(n) , \gamma^{m}_{-pq}(n), n \in \ZZZ \}$.
\end{proof} 
\begin{corollary} \label{coro} $L(c_{m},h_{pq}^{m})$ is a $ \Vir_{1/2}$-submodule of $M_{pq}^{m} $
\end{corollary}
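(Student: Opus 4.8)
The plan is to exhibit a lowest-weight vector in $M_{pq}^{m}$ of weight $(c_{m},h_{pq}^{m})$ and to show that the cyclic submodule it generates is already irreducible, hence equal to $L(c_{m},h_{pq}^{m})$. First I would record that, by the preceding lemma, the lowest eigenvalue of $L_{0}$ on $M_{pq}^{m}$ equals $h_{pq}^{m}$, and that Lemma \ref{lem} together with $\chi_{NS}(t) = 1 + t^{1/2} + \cdots$ shows the coefficient of $t^{h_{pq}^{m}}$ in $ch(M_{pq}^{m})(t)\cdot t^{c_{m}/24}$ is $1$; thus the $h_{pq}^{m}$-eigenspace is one-dimensional. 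Fix a unit vector $\Omega$ spanning it. Because $M_{pq}^{m}$ is a positive energy representation having this eigenvalue as its minimum, the operators $L_{n}$ ($n>0$) and $G_{r}$ ($r>0$) must annihilate $\Omega$, since they would otherwise produce vectors of energy strictly below $h_{pq}^{m}$. Hence $\Omega$ is a highest weight vector of weight $(c_{m},h_{pq}^{m})$.

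Next I would pass to the Verma module. Let $W = U(\Vir_{1/2})\,\Omega \subseteq M_{pq}^{m}$ be the submodule generated by $\Omega$; it is a highest weight module, so there is a surjection $\phi \colon V(c_{m},h_{pq}^{m}) \twoheadrightarrow W$ sending the Verma highest weight vector to $\Omega$. Both spaces carry contravariant Hermitian forms normalized by $(\Omega,\Omega)=1$ and determined by the adjoints $L_{n}^{\star}=L_{-n}$, $G_{r}^{\star}=G_{-r}$; since the $\Vir_{1/2}$-action on $M_{pq}^{m}$ is a $\star$-representation, $\phi$ intertwines the contravariant form on $V(c_{m},h_{pq}^{m})$ with the restriction to $W$ of the inner product of $M_{pq}^{m}$.

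The decisive step is the unitarity argument. Since $M_{pq}^{m}$ is unitary, the form on $W$ is positive definite, so for $v \in V(c_{m},h_{pq}^{m})$ one has $(v,v) = (\phi(v),\phi(v)) \ge 0$, with equality exactly when $\phi(v)=0$. For a positive semidefinite Hermitian form the null vectors form a subspace, so the radical of the contravariant form on the Verma module equals $\ker\phi$. But the radical of this form is precisely the maximal proper submodule $K(c_{m},h_{pq}^{m})$ (the submodule generated by the singular vectors, as used above), whose quotient is $L(c_{m},h_{pq}^{m})$ by definition. Therefore $\ker\phi = K(c_{m},h_{pq}^{m})$ and $W \cong V(c_{m},h_{pq}^{m})/K(c_{m},h_{pq}^{m}) = L(c_{m},h_{pq}^{m})$, realizing $L(c_{m},h_{pq}^{m})$ as a submodule of $M_{pq}^{m}$; in particular $ch(L(c_{m},h_{pq}^{m})) \le ch(M_{pq}^{m})$.

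I expect the main obstacle to be the identification $\ker\phi = K(c_{m},h_{pq}^{m})$, i.e. proving that the cyclic submodule $W$ is not merely a highest weight quotient but the full irreducible. This rests on two ingredients that must be invoked carefully: that positive definiteness of the inner product on $M_{pq}^{m}$ forces the null space of the pulled-back form to coincide with $\ker\phi$, and that the radical of the contravariant form on the Verma module is exactly its unique maximal proper submodule. Everything else — the one-dimensionality of the lowest eigenspace and the highest weight property of $\Omega$ — is a direct reading of the character formula in Lemma \ref{lem} and of the definition of positive energy.
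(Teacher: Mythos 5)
Your proof is correct and follows essentially the same route as the paper: the character asymptotics from Lemma \ref{lem} give a one-dimensional lowest $L_{0}$-eigenspace, and the corollary identifies $L(c_{m},h_{pq}^{m})$ with the minimal (cyclic) $\Vir_{1/2}$-submodule containing it. You merely spell out the step the paper leaves implicit — that unitarity of $M_{pq}^{m}$ forces the kernel of the Verma surjection to equal the radical, i.e.\ the maximal proper submodule, so the cyclic highest weight module is already the irreducible quotient.
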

\begin{proof}  $ ch( M_{pq}^{m}   ) .  t^{ \frac{c_{m}}{24} } \sim  t^{h_{pq}^{m}  }$, then, the $h_{pq}^{m}$-eigenspace of $L_{0}$  is one-dimensional; $L(c(m),h_{pq}^{m})$ is the minimal $ \Vir_{1/2}$-submodule of $M_{pq}^{m} $ containing it.
\end{proof}
\begin{corollary} \label{cha}
$ ch( L(c_{m},h_{pq}^{m}) ) \le ch( M_{pq}^{m}   )  \sim t^{ h_{pq}^{m} - \frac{c_{m}}{24} }. \chi_{NS}(t)(1- t^{\frac{pq}{2}} - t^{\frac{p'q'}{2}})
$
\end{corollary}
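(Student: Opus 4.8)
The plan is simply to read off the inequality from the submodule relation of Corollary \ref{coro} and to quote Lemma \ref{lem} for the asymptotic on the right-hand side. Indeed, the relation $\sim t^{h_{pq}^{m} - c_{m}/24}\chi_{NS}(t)(1 - t^{pq/2} - t^{p'q'/2})$ is nothing more than Lemma \ref{lem} rewritten, so the only genuinely new assertion is the comparison $ch(L(c_{m},h_{pq}^{m})) \le ch(M_{pq}^{m})$, which I interpret as the coefficientwise partial order on formal power series in $t$.

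First I would recall that, by Corollary \ref{coro}, $L(c_{m},h_{pq}^{m})$ is a $\Vir_{1/2}$-submodule of $M_{pq}^{m}$. Both are positive energy modules, hence graded by the eigenvalues of $L_{0}$; writing $M_{pq}^{m} = \bigoplus_{n} (M_{pq}^{m})_{h_{pq}^{m}+n}$ and likewise for $L(c_{m},h_{pq}^{m})$, the module inclusion restricts at each level to an inclusion $L(c_{m},h_{pq}^{m})_{h_{pq}^{m}+n} \subseteq (M_{pq}^{m})_{h_{pq}^{m}+n}$ of finite-dimensional $L_{0}$-eigenspaces. Since $ch(H)(t) = tr(t^{L_{0}-C/24})$ records, in the coefficient of each power of $t$, exactly the dimension of the corresponding $L_{0}$-eigenspace (all shifted by the common factor $t^{-c_{m}/24}$), these inclusions yield $\dim L(c_{m},h_{pq}^{m})_{h_{pq}^{m}+n} \le \dim (M_{pq}^{m})_{h_{pq}^{m}+n}$ for every $n$, which is precisely $ch(L(c_{m},h_{pq}^{m})) \le ch(M_{pq}^{m})$.

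There is no real obstacle: the claim is a formal consequence of the submodule property together with the fact that the character only sees graded dimensions. The one point worth stressing is that the relation is an inequality and not yet an equality — a priori $M_{pq}^{m}$ could carry further irreducible constituents appearing at levels above $\min(pq/2, p'q'/2)$ — and closing exactly this gap is the task of the later Wassermann argument, which upgrades $\le$ to $=$ and thereby delivers the character of $L(c_{m},h_{pq}^{m})$.
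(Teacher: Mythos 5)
Your argument is correct and is precisely the paper's (implicit) one: the inequality is the coefficientwise consequence of the $L_{0}$-graded submodule inclusion furnished by Corollary \ref{coro}, and the asymptotic on the right is exactly Lemma \ref{lem}. Your closing remark that the inequality is only upgraded to equality later by the Wassermann argument also matches the paper's structure.
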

\begin{theorem}  (Unitarity sufficient condition) \\  \label{suff}
 Let integers  $m \ge 2$, \  $1 \le p \le m-1$, \   $1 \le q \le  m+1 $  and  $p \equiv q [2]$, then: \\  
  $L(c_{m},h_{pq}^{m}) $ is a unitary highest weight representation of $ \Vir_{1/2}$
\end{theorem}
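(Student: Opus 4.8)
The plan is to read off unitarity directly from the coset construction already in place, without invoking the Kac determinant: I realize $L(c_m,h_{pq}^m)$ concretely inside the multiplicity space $M_{pq}^m$ and restrict the ambient positive definite inner product. The Kac determinant computation is reserved for the necessity direction of Theorem \ref{satya}; here only sufficiency (the construction of unitary modules) is at stake, and all the analytic work has been done upstream.

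First I would note that $M_{pq}^m$ is itself a unitary positive energy representation of $\Vir_{1/2}$. As a multiplicity space it is identified with the intertwiner space $Hom_{\hat{\gg}}(H_i,H)$ for $H=\F_{NS}^{\gg}\otimes(L(j,\ell)\otimes\F_{NS}^{\gg})$ and $H_i=L(k,\ell+2)\otimes\F_{NS}^{\gg}$, which carries the canonical positive definite inner product $(S,T)=T^{\star}S$; the corollary following Proposition \ref{cocycle} shows that $\WW_{1/2}$, and hence $\Vir_{1/2}$, acts on it unitarily with central charge $c_m$, the star relations $L_n^{\star}=L_{-n}$ and $G_r^{\star}=G_{-r}$ and positive energy being inherited from $H$. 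By Corollary \ref{coro} the $h_{pq}^m$-eigenspace of $L_0$ on $M_{pq}^m$ is one-dimensional, so $L(c_m,h_{pq}^m)$ sits inside $M_{pq}^m$ as the minimal $\Vir_{1/2}$-submodule it generates. Restricting the positive definite invariant form of $M_{pq}^m$ to this submodule yields a positive definite invariant form, which is exactly what unitarity of $L(c_m,h_{pq}^m)$ means.

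The one point I would write out carefully is that the form inherited from $M_{pq}^m$ coincides, after normalizing $(\Omega,\Omega)=1$, with the canonical contravariant Hermitian form of $L(c_m,h_{pq}^m)$, so that positive definiteness really transfers to the abstract module. This is a uniqueness statement: any invariant Hermitian form on a highest weight module is determined by its value on the one-dimensional highest weight space, since every inner product $(u,v)$ can be reduced to a multiple of $(\Omega,\Omega)$ by moving the lowering operators $L_{-n},G_{-r}$ across using their adjoints and the relations of $\Vir_{1/2}$. I do not expect a genuine obstacle at this stage: given Proposition \ref{ch2}, Proposition \ref{prod}, Corollary \ref{mult} and Proposition \ref{cocycle}, the present theorem is essentially their immediate corollary, matching the informal statement in the overview that unitarity of $M_{pq}^m$ forces unitarity of $L(c_m,h_{pq}^m)$.
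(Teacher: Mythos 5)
Your proposal is correct and follows essentially the same route as the paper: the paper's own proof is precisely that $M_{pq}^{m}$ is unitary (as the intertwiner space carrying the coset action of $\Vir_{1/2}$, via Proposition \ref{cocycle} and its corollary) and that $L(c_{m},h_{pq}^{m})$ is its $\Vir_{1/2}$-submodule by Corollary \ref{coro}, so unitarity restricts. Your extra remark on identifying the restricted form with the canonical contravariant form via its uniqueness on highest weight modules is a worthwhile detail the paper leaves implicit.
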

\begin{proof} Recall deÞnitions 2.5 and 2.21 of \cite{NSOAI}. \\
$M_{pq}^{m} $ is unitary; so is its $ \Vir_{1/2}$-submodule $L(c_{m},h_{pq}^{m}) $.
\end{proof}
\begin{remark}FQS  criterion  proves this is all its discrete series.
\end{remark} 
\newpage 
\section{Kac determinant formula}
\subsection{Preliminaries}  \label{pre}
Let $c$, $h \in \CCC$, recall  section 2.3  of  \cite{NSOAI} for definitions of Verma module $V(c,h)$, sesquilinear form $( . , . )$ and maximal proper submodule $K(c,h)$. \\
Let $(c,h) = (c_{m}, h_{pq}^{m}) =(\frac{3}{2} (1 - \frac{ 8}{m(m+2) } ) ,  \frac{[(m+2)p-mq]^{2}-4}{8m(m+2)}) $.
\begin{lemma}$ h_{pq}^{m} +   h_{qp}^{m} = \frac{p^{2}+q^{2}-2}{16}(1-2c_{m}/3) + \frac{(p-q)^{2}}{4}$  and $h_{pq}^{m}. h_{qp}^{m} = $ \\
 $ \frac{ 1}{16^{2} }[ 2(p-q)^{2}- (1-2c_{m}/3) (pq-p-q-1)] . [ 2(p-q)^{2}- (1-2c_{m}/3) (pq+p+q+1)]$
\end{lemma}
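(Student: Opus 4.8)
The plan is to treat both identities as purely algebraic identities in the integers $p,q,m$, obtained once the explicit values $c_m = \frac{3}{2}(1-\frac{8}{m(m+2)})$ and $h_{pq}^m = \frac{[(m+2)p-mq]^2-4}{8m(m+2)}$ (and $h_{qp}^m$ by exchanging $p,q$) are substituted. The single observation that organises the whole calculation is the identity $1-\frac{2c_m}{3} = \frac{8}{m(m+2)}$, which I would record first. It lets me replace every occurrence of $1-2c_m/3$ on the right-hand sides by $8/(m(m+2))$, so that both claims reduce to matching a rational function of $m(m+2)$ against polynomials in $p,q$. In particular $\frac{1}{2m(m+2)} = \frac{1}{16}(1-2c_m/3)$ and $\frac{4}{m(m+2)} = \frac{1}{2}(1-2c_m/3)$, which are the two conversions I will use repeatedly.

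For the sum I would put both fractions over the common denominator $8m(m+2)$ and expand the numerator $[(m+2)p-mq]^2 + [(m+2)q-mp]^2 - 8$. The cross terms contribute $-4m(m+2)pq$, while the square terms contribute $[(m+2)^2+m^2](p^2+q^2)$; the key algebraic step is $(m+2)^2+m^2 = 2m(m+2)+4$. Substituting this and regrouping turns the numerator into $2m(m+2)(p-q)^2 + 4(p^2+q^2-2)$. Dividing by $8m(m+2)$ and using $\frac{1}{2m(m+2)} = \frac{1}{16}(1-2c_m/3)$ then reads off exactly $\frac{p^2+q^2-2}{16}(1-2c_m/3) + \frac{(p-q)^2}{4}$.

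For the product the cleanest route is to exploit a difference-of-squares structure rather than to expand blindly. I would introduce $\alpha_+ = \sqrt{(m+2)/m}$ and $\alpha_- = -\sqrt{m/(m+2)}$, so that $\alpha_+\alpha_- = -1$, $\alpha_+^2+\alpha_-^2 = 2+\frac{4}{m(m+2)} = 2+\frac{1}{2}(1-2c_m/3)$, and a short check gives $h_{pq}^m = \frac{1}{8}[(p\alpha_+ + q\alpha_-)^2 - (\alpha_+ + \alpha_-)^2]$. Writing $A = p\alpha_+ + q\alpha_-$, $B = q\alpha_+ + p\alpha_-$, $C = \alpha_+ + \alpha_-$, one gets $64\,h_{pq}^m h_{qp}^m = (A-C)(A+C)(B-C)(B+C)$, which I would regroup as $[(A-C)(B-C)]\cdot[(A+C)(B+C)]$. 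In each bracket the diagonal $\alpha_+^2,\alpha_-^2$ terms combine with coefficient $\alpha_+^2+\alpha_-^2$ and the cross terms with coefficient $\alpha_+\alpha_- = -1$; substituting the value of $\alpha_+^2+\alpha_-^2$ makes the constant part collapse to a perfect square, since $2(p-1)(q-1)-(p-1)^2-(q-1)^2 = -(p-q)^2$. This yields $(A-C)(B-C) = -(p-q)^2 + \frac{1}{2}(1-2c_m/3)(p-1)(q-1)$ and $(A+C)(B+C) = -(p-q)^2 + \frac{1}{2}(1-2c_m/3)(p+1)(q+1)$. Multiplying the two brackets and absorbing the factors of $2$ produces $\frac{1}{16^2}[2(p-q)^2 - (1-2c_m/3)(p-1)(q-1)][2(p-q)^2-(1-2c_m/3)(p+1)(q+1)]$.

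The main obstacle is entirely the bookkeeping in the product: getting the diagonal and cross contributions to separate cleanly and seeing the constant pieces assemble into $-(p-q)^2$. I note that this computation naturally produces $(p-1)(q-1) = pq-p-q+1$ in the first factor, so I expect the displayed $pq-p-q-1$ to be a typographical slip for $pq-p-q+1$; the second factor $(p+1)(q+1)=pq+p+q+1$ matches as written. As a safeguard I would verify the whole identity on the small case $(p,q,m)=(1,3,4)$, where $1-2c_m/3 = \frac{1}{3}$, $h_{13}^4 = \frac{1}{6}$ and $h_{31}^4 = 1$, so that the product equals $\frac{1}{6}$ and the two factors evaluate to $8$ and $\frac{16}{3}$, giving $\frac{1}{256}\cdot 8 \cdot \frac{16}{3} = \frac{1}{6}$ as required.
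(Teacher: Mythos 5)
Your computation is correct, and in fact the paper states this lemma with no proof at all, so your argument supplies a verification the text omits. Both halves check out: the identity $1-\tfrac{2c_m}{3}=\tfrac{8}{m(m+2)}$, the expansion $(m+2)^2+m^2=2m(m+2)+4$ for the sum, and the factorization $(X^2-4)(Y^2-4)=(X-2)(Y-2)\cdot(X+2)(Y+2)$ with $X=(m+2)p-mq$, $Y=(m+2)q-mp$ (equivalently your $\alpha_\pm$ bookkeeping, using $X+Y=2(p+q)$ and $XY=-m(m+2)(p-q)^2+4pq$) for the product. You are also right that the first factor in the displayed product must read $pq-p-q+1=(p-1)(q-1)$ rather than the printed $pq-p-q-1$: with the printed sign the identity fails, as your check at $(p,q,m)=(1,3,4)$ shows ($\tfrac{1}{256}\cdot\tfrac{26}{3}\cdot\tfrac{16}{3}=\tfrac{13}{72}\neq\tfrac16$), whereas the corrected factor gives $\tfrac{1}{256}\cdot 8\cdot\tfrac{16}{3}=\tfrac16=h_{13}^4\,h_{31}^4$. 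The correction does not affect the subsequent use of the lemma (defining $h_{pq}^c$ for all $c$ by solving the symmetric system), since what matters is only that the sum and product are polynomial in $c$.
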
   
Then, solving the system of the lemma, we can  define $h_{pq}^{c}$, $\forall c \in \CCC$.
\begin{definition}
$\varphi_{pp}(c,h) = (h-h_{pp}^{c})$ and \\ $\varphi_{pq}(c,h) = (h-h_{pq}^{c})(h-h_{qp}^{c})$ if $p \neq q$
\end{definition}
\begin{lemma}$ \varphi_{pq} \in \CCC[c,h ]$ is irreducible.
\end{lemma}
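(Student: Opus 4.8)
The plan is to use the fact that $\varphi_{pq}$ is \emph{monic} (hence primitive) as a polynomial in $h$ over $\CCC[c]$, so that by Gauss's lemma its irreducibility in $\CCC[c,h]=\CCC[c][h]$ is equivalent to its irreducibility in $\CCC(c)[h]$. When $p=q$, $\varphi_{pp}=h-h_{pp}^{c}$ has degree one in $h$ with unit leading coefficient; any such polynomial $h-g(c)$ is automatically irreducible, since a factor of $h$-degree $0$ would be some $a(c)\in\CCC[c]$ dividing the coefficient $1$ of $h$, forcing $a\in\CCC^{\times}$. This disposes of the diagonal case immediately.

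For $p\ne q$, $\varphi_{pq}=(h-h_{pq}^{c})(h-h_{qp}^{c})=h^{2}-S\,h+P$ has degree two in $h$, where $S=h_{pq}^{c}+h_{qp}^{c}$ and $P=h_{pq}^{c}h_{qp}^{c}$ are the genuine polynomials in $c$ produced by solving the system of the preceding lemma (it is precisely the \emph{symmetric} combination that is rational in $c$, which is why $h_{pq}^{c}$ was introduced in this way). Over $\CCC(c)$ a monic quadratic is irreducible iff it has no root in $\CCC(c)$, i.e. iff its discriminant $\Delta:=S^{2}-4P=(h_{pq}^{c}-h_{qp}^{c})^{2}$ is not a square in $\CCC(c)$. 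Thus the whole question reduces to analysing this single polynomial $\Delta\in\CCC[c]$.

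To compute $\Delta$ I would introduce $t=1-\frac{2}{3}c$, which equals $\frac{8}{m(m+2)}$ on the discrete values and generates the same field $\CCC(c)=\CCC(t)$. From $h_{pq}^{m}=\frac{[(m+2)p-mq]^{2}-4}{8m(m+2)}$ a difference-of-squares factorization gives $h_{pq}^{m}-h_{qp}^{m}=\frac{(m+1)(p^{2}-q^{2})}{2m(m+2)}$, and using $(m+1)^{2}=m(m+2)+1$ together with $m(m+2)=8/t$ this rewrites as the polynomial identity $\Delta=\frac{(p^{2}-q^{2})^{2}}{256}\,t(t+8)$. Equivalently one verifies this directly from the lemma's formulas for $S$ and $P$; the individual square roots cancel in the symmetric combination, and the two sides agree at the infinitely many points $c=c_{m}$, hence as polynomials. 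Since $p\ne q$ forces $p^{2}\ne q^{2}$ for positive integers, $\Delta$ is a nonzero scalar multiple of $t(t+8)$, a polynomial with the two \emph{distinct} simple roots $t=0$ and $t=-8$. A polynomial lying in $\CCC[c]$ that is a square in $\CCC(c)$ must already be a square in $\CCC[c]$ (as $\CCC[c]$ is a UFD), hence have all roots of even multiplicity; $\Delta$ does not, so it is not a square in $\CCC(c)$. Therefore $\varphi_{pq}$ has no root in $\CCC(c)$, is irreducible there, and consequently irreducible in $\CCC[c,h]$.

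I expect the main obstacle to be purely computational: carrying out the discriminant calculation cleanly and recognising the factor $t(t+8)$, while keeping straight that the individual roots $h_{pq}^{c}$ are \emph{not} rational functions of $c$ (they involve $\sqrt{1+8/t}$), so every manipulation must pass through the symmetric functions $S,P$ or through $\Delta$ itself. Once the factorization $\Delta=\frac{(p^{2}-q^{2})^{2}}{256}\,t(t+8)$ is established, the non-square conclusion and the Gauss-lemma wrap-up are routine.
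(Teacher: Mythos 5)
The paper states this lemma without proof, so there is nothing to compare against; your argument is correct and supplies the missing proof. The reduction via Gauss's lemma to irreducibility over $\CCC(c)$ is legitimate because $\varphi_{pq}$ is monic in $h$ with coefficients $S=h_{pq}^{c}+h_{qp}^{c}$ and $P=h_{pq}^{c}h_{qp}^{c}$ lying in $\CCC[c]$ by construction; the linear case $p=q$ is immediate; and your discriminant computation checks out: $h_{pq}^{m}-h_{qp}^{m}=\frac{(m+1)(p^{2}-q^{2})}{2m(m+2)}$, whence $\Delta=\frac{(p^{2}-q^{2})^{2}}{256}\,t(t+8)$ with $t=1-\frac{2}{3}c$, a nonzero polynomial in $c$ with two distinct simple roots, hence not a square in $\CCC(c)$. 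One caution about your remark that the identity can "equivalently" be verified from the lemma's formulas for $S$ and $P$: the paper's displayed formula for $h_{pq}^{m}h_{qp}^{m}$ contains a typo --- the factor $(pq-p-q-1)$ should read $(pq-p-q+1)$, i.e. the correct factorization is $\frac{1}{256}\left[2(p-q)^{2}-t(p-1)(q-1)\right]\cdot\left[2(p-q)^{2}-t(p+1)(q+1)\right]$; with the formula as printed, $S^{2}-4P$ does not reduce to $\frac{(p^{2}-q^{2})^{2}}{256}t(t+8)$ (already for $(p,q)=(1,3)$ the printed $P$ gives $\frac{4-3t-t^{2}}{16}$ instead of the true $\frac{1-t}{4}$). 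Your primary derivation, which works directly from $h_{pq}^{m}$ at the infinitely many points $c=c_{m}$ and extends by polynomial identity, is independent of that formula and is the right way to do it.
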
  
\begin{definition}
Let $V_{n}(c,h)$ the $n$-eigenspace of $D=L_{0}-hId$ generated by the vectors
 $ G_{-j_{\beta}} \dots G_{-j_{1}}L_{-i_{\alpha}} \dots L_{-i_{1}}\Omega $  \  \  such that $\sum i_{s} + \sum j_{s} = n$, \\
with \  $   Ê0 <  i_{1} \le \dots \le i_{\alpha},   \quad  \frac{1}{2} \le j_{1} < \dots < j_{\beta}$; let $d(n)$ its dimension. \end{definition}
\begin{remark}  $d(n) < \infty$,  $d(n) =0$ for $n<0$. \\  Clearly  $(V_{n}(c,h),V_{n'}(c,h)) = 0$ if $n \neq n'$ and $V(c,h) = \bigoplus V_{n}(c,h) $. \end{remark}
\begin{definition}
Let $M_{n}(c,h)$ the matrix of $( . , . ) $ on $V_{n}(c,h)$ \\ and $det_{n}(c,h) = det( M_{n}(c,h))$
\end{definition}
\begin{examples}
$M_{0}(c,h) =  (\Omega , \Omega)  = (1) $, $M_{\1/2}(c,h) =  (G_{-\1/2}\Omega ,G_{-\1/2} \Omega)  = ( 2h) $, \\ $M_{1}(c,h) =   (L_{-1}\Omega ,L_{-1} \Omega)  = ( 2h) $, and, $M_{\3/2}(c,h) =$
\begin{displaymath} \left (  \begin{array}{cc}  (G_{-\1/2}L_{-1}\Omega ,G_{-\1/2}L_{-1} \Omega)    &   (G_{-\1/2}L_{-1}\Omega ,G_{-\3/2} \Omega)  \\
                                                        (G_{-\3/2}\Omega ,G_{-\1/2}L_{-1} \Omega)           &                 (G_{-\3/2}\Omega ,G_{-\3/2} \Omega)               \end{array}   \right )
     =    \left (  \begin{array}{cc}  2h + 4h^{2}   &   4h \\
                                                      4h        &          2h+\frac{2}{3}c             \end{array}   \right )                                                 
\end{displaymath}
\end{examples}
\begin{remark}
$det_{\3/2}(c_{m},h) =  8h[h^{2} - (\frac{3}{2} -\frac{c_{m}}{3})h +  c/6 ] = 8h (h - h_{13}^{m} )(h - h_{31}^{m} )  $, \\
 then, $det_{\3/2}(c,h) = 8h (h - h_{13}^{c} )(h - h_{31}^{c} )  = 8 \varphi_{11}(c,h) . \varphi_{13}(c,h) $ \ $\forall c \in \CCC$
\end{remark}
\begin{theorem}  (Kac determinant formula) \label{kac}
\begin{displaymath}
det_{n}(c,h) = A_{n}  \prod_{\stackrel{0  < pq/2 \le n}{p \equiv q \lbrack 2 \rbrack} } (h - h_{pq}^{c} )^{d(n-pq/2)} = A_{n}  \prod_{\stackrel{0  < pq/2 \le n}{p \le q , \  p \equiv q \lbrack 2 \rbrack} } \varphi_{pq}^{d(n-pq/2)}(c,h) 
\end{displaymath}
with $A_{n} > 0$ independent of $c$ and $h$.
\end{theorem}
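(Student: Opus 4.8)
The plan is to determine $det_n$ from two complementary facts: a divisibility statement, that each irreducible $\varphi_{pq}$ (with $p\equiv q\,[2]$ and $pq/2\le n$) divides $det_n$ to at least the power $d(n-pq/2)$, and a count of the degree of $det_n$ in $h$ showing that this exhausts the degree. Since $\varphi_{pq}=(h-h_{pq}^c)(h-h_{qp}^c)$ fuses the factors indexed by $(p,q)$ and $(q,p)$, the two displayed forms of the product coincide, so it is enough to argue with the $\varphi_{pq}$, $p\le q$. The link between the representation theory and the zeros of $det_n$ is that $det_n(c,h)$ vanishes precisely when $V_n(c,h)$ meets the radical $K(c,h)$, and that $K(c,h)$ is generated by singular vectors $s$, i.e. vectors with $G_{1/2}.s=G_{3/2}.s=0$; recall also $ch(V(c,h))=t^{h-c/24}\chi_{NS}(t)$ from the PBW basis.

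First I would establish divisibility. Fix $(p,q)$ and work along the curve $C_{pq}:\,h=h_{pq}^c$. At each discrete point the GKO character bound of corollary \ref{cha} forces $V(c_m,h_{pq}^m)$ to have a nonzero radical, so $det_n(c_m,h_{pq}^m)=0$ for all $n\ge pq/2$; as the $(c_m,h_{pq}^m)$ are infinitely many zeros of the irreducible $\varphi_{pq}$, this already gives $\varphi_{pq}\mid det_n$. To reach the full power I would use a singular vector $s$ at level exactly $pq/2$ on $C_{pq}$, the contribution to the bound $(1-t^{pq/2}-t^{p'q'/2})$ at level $pq/2$: it generates a submodule isomorphic to $V(c,h_{pq}^c+pq/2)$ inside the radical $K(c,h)$, whose level-$n$ part, of dimension $d(n-pq/2)$, lies in $\ker M_n(c,h)$. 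Since $M_n$ depends polynomially on $(c,h)$ and is Hermitian, a kernel of dimension $r$ along an irreducible divisor makes $det_n$ vanish there to order $\ge r$ (localize at the prime $(\varphi_{pq})$ and read off the Smith normal form); hence $\varphi_{pq}^{\,d(n-pq/2)}\mid det_n$.

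Next I would match degrees in $h$. For a PBW monomial $v$ spanning $V_n(c,h)$, the diagonal entry $(v,v)$ has top $h$-term of degree equal to the number of creation operators in $v$, with a purely numerical coefficient, since the central contributions $\frac{C}{12}(m^3-m)$ and $\frac{C}{3}(r^2-\frac14)$ carry no $L_0$ and thus only lower the $h$-degree; the leading term of $det_n$ is then the product of the diagonal leading terms, giving $\deg_h det_n=\sum_{|v|=n}\#(v)$. A generating-function identity based on $\chi_{NS}$ shows $\sum_{|v|=n}\#(v)=\sum_{0<pq/2\le n,\,p\equiv q[2]}d(n-pq/2)$, which is exactly $\deg_h$ of the product. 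Combined with the divisibility, this forces $det_n=A_n\prod\varphi_{pq}^{\,d(n-pq/2)}$ with $A_n$ the leading $h$-coefficient; $A_n$ is independent of $c$ by the same remark on central terms, and $A_n>0$ follows by evaluating on the continuous region (large $c,h$), where the form is positive definite while the product is also positive.

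The main obstacle is the divisibility step, more precisely producing the singular vector, hence the radical of the expected dimension $d(n-pq/2)$, at \emph{generic} points of $C_{pq}$ rather than only at the discrete GKO points; equivalently, realizing the Verma embedding $V(c,h_{pq}^c+pq/2)\hookrightarrow V(c,h_{pq}^c)$ for all $c$ on the curve. The rank/order lemma turns such a generic kernel into the correct order of vanishing, and the degree count of the third paragraph is what prevents any multiplicity from exceeding $d(n-pq/2)$, so that the two bounds meet and each exponent is pinned to exactly $d(n-pq/2)$. I would bridge from the discrete points to the whole curve using that the vanishing locus $\{det_{pq/2}(c,h_{pq}^c)=0\}$ is a polynomial condition in $c$ satisfied by the infinitely many $c_m$, hence identically, together with the irreducibility of $\varphi_{pq}$.
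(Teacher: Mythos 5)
Your proposal follows essentially the same route as the paper: divisibility of $det_n$ by each $\varphi_{pq}^{d(n-pq/2)}$ obtained from the GKO character bound (singular vectors at the infinitely many discrete points $(c_m,h_{pq}^m)$, irreducibility of $\varphi_{pq}$, and the kernel-dimension/order-of-vanishing lemma applied to the $d(n-pq/2)$-dimensional subspace generated by a singular vector), combined with the count of $\deg_h det_n$ from the diagonal PBW entries matching $\sum d(n-pq/2)$. The paper's proof (Proposition \ref{propo} together with Lemmas \ref{lemma2} and \ref{lemma3}) is exactly this argument, including your concluding remark that the passage from the discrete GKO points to the whole curve $C_{pq}$ rests on polynomiality and the irreducibility of $\varphi_{pq}$.
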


\subsection{Singulars vectors and characters}   \label{singulars}  
\begin{definition}
A vector $s \in V(c,h)$ is singular if: 
\begin{description}
\item[(a)] $  L_{0} . s = (h+n)s   \quad    \textrm{with} \  n>0$  (its level)
\item[(b)] $\Vir_{1/2}^{+} . s = {0}$   \quad    \textrm{(recall definition 2.13 of \cite{NSOAI})}
\end{description} 
\end{definition}
\begin{remark} \label{shortdef} Let $ n > 0 $,   $s \in  V_{n}(c,h)$     is singular iff  $G_{1/2}.s = G_{3/2}.s = 0$  \end{remark}
  \begin{examples} \label{singulars} $(mG_{-3/2} - (m+2)L_{-1}G_{-1/2})\Omega \in V_{3/2}(c_{m},h_{13}^{m})$, \\ $G_{-1/2}\Omega  \in V_{1/2}(c,h_{11}^{c})$, \ $ (L_{-1}^{2}-\frac{4}{3}h_{22}^{c}L_{-2}- G_{-3/2}G_{-1/2})\Omega \in V_{2}(c,h_{22}^{c})$
    \end{examples}
\begin{definition}
 $K_{n}(c,h) = ker (M_{n}(c,h) ) =  \{ x \in V_{n}(c,h) ; (x,y) = 0 \  \forall y   \}$
\end{definition}    
\begin{proposition}  \label{cheval}
The singular vectors generate $K(c,h)$.
\end{proposition}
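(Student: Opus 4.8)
The plan is to prove both inclusions for the submodule $N \subseteq V(c,h)$ generated by \emph{all} singular vectors of $V(c,h)$, exploiting that $K(c,h)=\bigoplus_{n} K_{n}(c,h)$ is the radical of the contravariant form $(\,.\,,.\,)$ and is the maximal proper submodule, hence graded (see section 2.3 of \cite{NSOAI}). So the whole statement reduces to $N=K(c,h)$.

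First I would check $N\subseteq K(c,h)$. Let $s$ be singular of level $n>0$. Every $v\in V_{n}(c,h)$ is a combination of monomials $Y\Omega$ with $Y$ a product of negative generators of total degree $n$, so $(s,Y\Omega)=(Y^{\star}s,\Omega)$, where $Y^{\star}$ is a nonconstant product of positive generators; thus $Y^{\star}s\in\Vir_{1/2}^{+}.s=\{0\}$ by remark \ref{shortdef}. Since distinct degrees are orthogonal, $(s,v)=0$ for all $v$, i.e. $s\in K_{n}(c,h)$; as $K(c,h)$ is a submodule, $N\subseteq K(c,h)$.

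For the reverse inclusion I argue by contradiction via minimal degree. Suppose $N\subsetneq K(c,h)$ and let $d\in\frac{1}{2}\NNN$ be minimal with $N_{d}\neq K_{d}(c,h)$; note $d>0$ since $M_{0}(c,h)=(1)$ gives $K_{0}(c,h)=0$, and by minimality $K_{m}(c,h)=N_{m}$ for all $m<d$. For $u\in K_{d}(c,h)$, contravariance yields $(G_{r}u,v)=(u,G_{-r}v)=0$ for all $v$, so $G_{1/2}u\in K_{d-1/2}(c,h)$ and $G_{3/2}u\in K_{d-3/2}(c,h)$, whence $G_{1/2}u,G_{3/2}u\in N$. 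As $G_{1/2},G_{3/2}$ generate $\Vir_{1/2}^{+}$ (remark \ref{shortdef}), every class $\bar u\in(K/N)_{d}$ is a singular vector, at level $d>0$, of the quotient module $\bar V=V(c,h)/N$.

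The main step, and the genuine obstacle, is to upgrade such a quotient-singular vector to an honest singular vector of $V(c,h)$ lying outside $N$, which contradicts the definition of $N$. Concretely, taking $u\in K_{d}(c,h)\setminus N_{d}$ one has $G_{1/2}u=a\in N_{d-1/2}$ and $G_{3/2}u=b\in N_{d-3/2}$, and it suffices to produce $w\in N_{d}$ with $G_{1/2}w=a$ and $G_{3/2}w=b$: then $u-w$ is a genuine singular vector, so $u-w\in N$ and hence $u\in N$, the desired contradiction. Finding this correction $w$ is exactly where the thin structure of the submodule lattice enters, controlled by the Kac determinant (theorem \ref{kac}): at each level only the descendants of the finitely many lower singular vectors contribute to $K$, and the consistency relations forced by $[G_{1/2},G_{3/2}]_{+}$ and $G_{1/2}^{2}=L_{1}+\dots$ constrain the pair $(a,b)$ to be the image of a single $w\in N_{d}$ under $(G_{1/2},G_{3/2})$. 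I expect the bookkeeping establishing this surjectivity onto the relevant part of $N$, uniformly in the level $d$, to be the hardest and most technical point; the minimal-degree reduction above is purely formal.
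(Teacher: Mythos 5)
Your first inclusion and your minimal-degree reduction are both correct, and they correctly isolate the crux of the matter: a vector $u\in K_{d}(c,h)$ whose images under $G_{1/2}$ and $G_{3/2}$ already lie in the submodule $N$ generated by singular vectors must be corrected by some $w\in N_{d}$ so that $u-w$ is a genuine singular vector. This is precisely the assertion that $V(c,h)$ has no ``subsingular'' vectors, i.e.\ that the radical of the quotient $V(c,h)/N$ contains no singular vector of positive level. You explicitly leave this unproved, and it is not a technicality: it is the entire content of the proposition. Nothing you invoke (the relations $[G_{1/2},G_{3/2}]_{+}$, $G_{1/2}^{2}$, or the Kac determinant) is actually brought to bear on the claimed surjectivity of $(G_{1/2},G_{3/2})$ from $N_{d}$ onto the admissible pairs $(a,b)$, so the proposal is not a proof.

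For comparison, the paper's own argument is a different and even shorter route: given $v\in K_{n}(c,h)$, apply raising operators until they all annihilate, obtaining a nonzero singular vector $s\in(\Vir_{1/2}^{+})^{n'}.v$, and then assert that $s$ ``generates $v$.'' That final assertion is the same gap as yours in another guise: $s$ lives at a strictly lower level than $v$, and there is no a priori reason the submodule it generates contains $v$. So you have not missed an idea that the paper supplies; if anything you have diagnosed the difficulty more honestly than the source. But as a self-contained argument your proposal stops exactly at the decisive step, and closing it genuinely requires structural input on Verma modules of $\Vir_{1/2}$ (a Feigin--Fuchs-type description of the submodule lattice, or a Jantzen-filtration argument) rather than the ``bookkeeping'' you gesture at.
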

\begin{proof}
They clearly generate a subspace of $K(c,h)$. 
Now, let $v \in K_{n}(c,h)$, then $\Vir_{1/2}^{+}.v$ is of level $< n$ and $\exists n'$ such that  $(\Vir_{1/2}^{+})^{n'+1}.v = \{0 \}$ and  
 $(\Vir_{1/2}^{+})^{n'}.v \neq \{0 \}$ and contains a singular vector generating $v$.
\end{proof}
\begin{definition}
Let $V^{s}(c,h)$ the minimal $\Vir_{1/2}$-submodule of $V(c,h)$ \\ containing $s$   and $V_{n}^{s}(c,h) = V^{s}(c,h) \cap V_{n}(c,h) $. \end{definition}
\begin{lemma}
Let  $s$ singular of level $n'$, then $dim(V_{n}^{s}(c,h)) = d(n-n')$.
\end{lemma}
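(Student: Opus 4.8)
The plan is to identify $V^{s}(c,h)$ with a full Verma module of shifted weight and then transport the dimension count. Since $s$ is singular we have $\Vir_{1/2}^{+}.s=0$, and $s$ is an eigenvector of $L_{0}$ and of $C$, so the minimal submodule it generates is $V^{s}(c,h)=U(\Vir_{1/2}^{-}).s$, where $U(\Vir_{1/2}^{-})$ is the enveloping algebra of the negative part spanned by the $L_{-i}$ and the $G_{-j}$. Grading by $L_{0}$, a monomial $G_{-j_{\beta}}\cdots G_{-j_{1}}L_{-i_{\alpha}}\cdots L_{-i_{1}}$ of total degree $k$ sends $s$ into $V_{n'+k}(c,h)$; taking $k=n-n'$, the $d(n-n')$ ordered monomials of degree $n-n'$ span $V^{s}_{n}(c,h)$. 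This already gives the easy inequality $\dim V^{s}_{n}(c,h)\le d(n-n')$.

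First I would upgrade this spanning statement to a basis statement. Because $s$ is a highest weight vector of weight $h+n'$ and central charge $c$, the universal property of the Verma module furnishes a $\Vir_{1/2}$-homomorphism
\[
\phi : V(c,h+n')\longrightarrow V(c,h),\qquad \phi(\Omega')=s,
\]
where $\Omega'$ is the cyclic vector of $V(c,h+n')$; this $\phi$ is graded (it maps $V_{k}(c,h+n')$ into $V_{n'+k}(c,h)$) and its image is exactly $V^{s}(c,h)$. Hence $\dim V^{s}_{n}(c,h)=\dim \phi\bigl(V_{n-n'}(c,h+n')\bigr)$, and since $\dim V_{n-n'}(c,h+n')=d(n-n')$, the lemma is equivalent to the injectivity of each graded piece of $\phi$, that is, to $\phi$ itself being injective.

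To prove injectivity I would use that $V(c,h)$ is a free $U(\Vir_{1/2}^{-})$-module of rank one on $\Omega$ (this is precisely the PBW basis of $V_{n}(c,h)$ recorded above). Writing $s=u.\Omega$ with $u\in U(\Vir_{1/2}^{-})$ of degree $n'$, freeness gives $\ker\phi=\{x.\Omega' : xu=0\ \text{in}\ U(\Vir_{1/2}^{-})\}$, so $\phi$ is injective as soon as $u$ is not a left zero divisor; it would then follow that \emph{every} nonzero homomorphism between two Verma modules is injective, settling the claim for all singular $s$ simultaneously. The hard part is exactly this last point. Unlike the purely even (Virasoro) case, one cannot conclude that $U(\Vir_{1/2}^{-})$ is a domain merely by passing to the associated graded, since $\mathrm{gr}\,U(\Vir_{1/2}^{-})\cong S(\Vir_{1/2,\bar{0}}^{-})\otimes\Lambda(\Vir_{1/2,\bar{1}}^{-})$ contains the exterior algebra on the odd generators and therefore \emph{does} have zero divisors. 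What rescues the argument is the super-bracket $[G_{-r},G_{-r}]_{+}=2L_{-2r}\neq 0$: no nonzero odd element squares to zero, and the $G_{-r}$ behave as square roots of the $L_{-2r}$, so $U(\Vir_{1/2}^{-})$ is nonetheless an integral domain. Establishing (or citing) this domain property for the Neveu-Schwarz nilpotent subalgebra is the crux; once it is in hand, $u\neq 0$ forces $xu=0\Rightarrow x=0$, hence $\phi$ is injective and $\dim V^{s}_{n}(c,h)=d(n-n')$.
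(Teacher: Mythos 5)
Your opening paragraph is, in substance, the entirety of the paper's own proof: the paper disposes of this lemma with the single line ``$D.(A.s)=nA.s \iff D.(A\Omega)=(n-n')A\Omega$'', i.e.\ the observation that the PBW monomials $A$ of degree $n-n'$ carry $s$ into $V_{n}^{s}(c,h)$ and span it. You are right that this only yields $\dim V_{n}^{s}(c,h)\le d(n-n')$, and that the equality asserted by the lemma amounts to the linear independence of the vectors $A.s$, i.e.\ to the injectivity of the graded morphism $\phi:V(c,h+n')\to V(c,h)$, $\Omega'\mapsto s$, equivalently to the fact that $u$ (where $s=u\Omega$) is not killed by left multiplication by any nonzero element of $U(\Vir_{1/2}^{-})$. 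You have therefore isolated a genuine point that the paper's one-line proof passes over in silence, and your reduction of it to the absence of zero divisors in $U(\Vir_{1/2}^{-})$ is the correct one. (This equality is really used later: lemma \ref{lemma2} needs the full exponent $d(n-n')$ in the determinant, not an upper bound.)

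The gap is that you do not establish this last property; you explicitly defer it, and the heuristic you offer --- no nonzero odd element of $\Vir_{1/2}^{-}$ has vanishing self-bracket, since $[G_{-r},G_{-r}]_{+}=2L_{-2r}$ --- is a necessary condition for $U(\Vir_{1/2}^{-})$ to be a domain but is not by itself sufficient: one must still rule out zero divisors among inhomogeneous elements of the enveloping algebra, which is exactly the statement to be proved. Two ways to close it: (i) cite the known injectivity of nonzero morphisms between Neveu--Schwarz Verma modules from the literature on super-Virasoro Verma modules (the input being precisely that for $x=\sum a_{r}G_{-r}\neq 0$ the coefficient of $L_{-2r_{0}}$ in $[x,x]_{+}$ is $2a_{r_{0}}^{2}\neq 0$, with $r_{0}$ the largest index occurring, so $\Vir_{1/2}^{-}$ has no isotropic odd vectors even over $\CCC$); or (ii) give a direct PBW leading-term argument: order the monomials by level and then lexicographically, and check that the leading monomial of a product $xu$ is the sorted merge of the leading monomials of $x$ and $u$, a repeated factor $G_{-r}$ being replaced by $L_{-2r}$ rather than by $0$, so that no cancellation of top terms can occur and $xu\neq 0$. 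Either route completes your proof; as written, the crucial step is asserted rather than proved --- though your version is still more complete than the one in the paper, which assumes the independence tacitly.
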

\begin{proof}  $D.(A.s) = n A.s$ $\iff$ $D.(A\Omega) = (n-n') A\Omega$
\end{proof}
\begin{lemma} \label{lem22}$ch(V(c,h )) = t^{h-\frac{c}{24}} \chi_{NS} (t)$
\end{lemma}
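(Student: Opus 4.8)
The plan is to reduce the identity to a standard generating-function count over the ordered monomial basis of the Verma module. By Definition \ref{formidable} and the fact that $C$ acts on $V(c,h)$ by the scalar $c$, we have $ch(V(c,h))(t) = t^{-c/24}\,tr(t^{L_0})$. Since $L_0 = D + hI$ acts by the scalar $h+n$ on the eigenspace $V_n(c,h)$ of $D$, the trace splits as $tr(t^{L_0}) = \sum_n d(n)\,t^{h+n} = t^{h}\sum_n d(n)\,t^n$, where $d(n) = \dim V_n(c,h)$. Hence the whole statement is equivalent to the Poincar\'e series identity $P(t) := \sum_n d(n)\,t^n = \chi_{NS}(t)$.

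To evaluate $P(t)$ I would invoke the basis already introduced: the vectors $G_{-j_\beta}\cdots G_{-j_1}L_{-i_\alpha}\cdots L_{-i_1}\Omega$ with $0 < i_1 \le \dots \le i_\alpha$ and $\frac{1}{2} \le j_1 < \dots < j_\beta$ form a basis of $V(c,h)$, this being the freeness of the Verma module over the enveloping algebra of the negative part of $\Vir_{1/2}$ recalled from \cite{NSOAI}. Consequently $d(n)$ counts exactly the pairs consisting of a multiset of positive integers $i_1 \le \dots \le i_\alpha$ (from the $L$-generators) together with a strictly increasing family of positive half-integers $j_1 < \dots < j_\beta$ (from the $G$-generators), of total weight $\sum i_s + \sum j_s = n$. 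Because the two families are indexed independently, $P(t)$ factors as a product of a bosonic and a fermionic contribution.

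For the bosonic factor, each $L_{-k}$ with $k \in \NNN^\star$ can be applied with arbitrary multiplicity, and summing the geometric series in each degree gives $\prod_{k \in \NNN^\star}(1-t^k)^{-1}$. For the fermionic factor, each $G_{-(k-1/2)}$ with $k \in \NNN^\star$ occurs at most once, since the $j$'s are strictly increasing, so it contributes a factor $1 + t^{k-1/2}$ and the product is $\prod_{k \in \NNN^\star}(1 + t^{k-1/2})$. Multiplying the two gives $P(t) = \prod_{k \in \NNN^\star}\frac{1+t^{k-1/2}}{1-t^k} = \chi_{NS}(t)$, which is the asserted formula.

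The computation is entirely routine; the only substantive input is the linear independence of the spanning family, i.e. the Poincar\'e--Birkhoff--Witt statement guaranteeing that the ordered monomials are a basis and not merely a generating set. I expect this to be the sole point requiring care, but it is exactly what the construction of $V(c,h)$ in \cite{NSOAI} provides, so no further work is needed here. Once it is granted, all exponents are positive and the grading is by $\frac{1}{2}\NNN$ with finite-dimensional pieces, so the formal manipulation of the products raises no convergence issue.
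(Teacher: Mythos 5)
Your proposal is correct and follows essentially the same route as the paper: reduce to the Poincar\'e series identity $\sum_m d(m)t^m = \chi_{NS}(t)$ and then read off the coefficient of $t^m$ in the product $\prod_{n}(1+t^{n-1/2})(1-t^{n})^{-1}$ by matching the fermionic factors to the $G_{-(n-1/2)}$ (each at most once) and the bosonic factors to the $L_{-n}$ (arbitrary multiplicity), exactly as the paper does by ``identifying $q^{n-1/2}$ to $G_{n-1/2}$ and $q^{n}$ to $L_{n}$.'' You simply spell out the PBW input more explicitly than the paper does.
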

\begin{proof}
$ch(V(c,h )) =  tr( t^{L_{0}- \frac{c}{24}}) =  t^{h-\frac{c}{24}} \sum_{m \in \1/2\NNN} d(m)t^{m} $ \\
$ \chi_{NS} (t) = \prod_{n \in \NNN^{\star}} (\frac{1+q^{n-\1/2} }{1-q^{n}}) =  \prod_{n \in \NNN^{\star}} (1+q^{n-\1/2} )(1+q^{n} + q^{2n}+... )$ \\
 Identifying $q^{n-\1/2}$ to $G_{n-\1/2}$,  $q^{n} $ to $L_{n}$, the  coefficient of $q^{m} $ is exactly $d(m)$.
\end{proof}
\begin{corollary}
$ch(V^{s}(c,h )) = t^{n+h-\frac{c}{24}} \chi_{NS} (t)$, \  with $n$ the level of $s$.
\end{corollary}
\begin{remark}  \label{coro2}$dim(L_{n}(c,h) )= dim(V_{n}(c,h))-dim(K_{n}(c,h))$, then, \\ $ch(L(c,h)) = ch(V(c,h)) - \sum_{sÊ}ch(V^{s}(c,h)) +  \sum_{s, s'}ch(V^{s} \cap V^{s'}) - \dots$.  \end{remark}
\begin{corollary}  \label{coro3} $V(c,h )$ admits a  singular vector $s$ of minimal level $n$  if and only if 
$ch(L(c,h)) \sim  t^{h-\frac{c}{24}} \chi_{NS} (t) (1-t^{n})$  \end{corollary}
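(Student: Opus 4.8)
The plan is to deduce both implications from the Euler--Poincar\'e expansion recorded in Remark \ref{coro2}, namely
\[
ch(L(c,h)) = ch(V(c,h)) - \sum_{s} ch(V^{s}(c,h)) + \sum_{s,s'} ch(V^{s}\cap V^{s'}) - \dots,
\]
combined with Lemma \ref{lem22}, which gives $ch(V(c,h)) = t^{h-c/24}\chi_{NS}(t)$ and identifies $d(m)$ with the coefficient of $t^{m}$ in $\chi_{NS}(t)$, and with the character formula $ch(V^{s}(c,h))=t^{\nu+h-c/24}\chi_{NS}(t)$ for a singular vector $s$ of level $\nu$ (the corollary preceding Remark \ref{coro2}). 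The guiding observation is that every correction term in this alternating sum is supported in strictly positive levels, so the expansion of $ch(L(c,h))$ coincides with that of $ch(V(c,h))$ up to the level of the lowest singular vector and first departs from it there.

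For the forward implication I would assume that $V(c,h)$ admits a singular vector of minimal level $n$. Then every $V^{s}(c,h)$ occurring in the sum has $\mathrm{level}(s)\ge n$, so $\sum_{s}ch(V^{s}(c,h))$ has lowest term of order $t^{h-c/24+n}$, while the intersection terms start at order at least $t^{h-c/24+2n}$. Substituting the formulas above then yields, to leading order,
\[
ch(L(c,h)) = t^{h-c/24}\chi_{NS}(t)\bigl(1 - t^{n}\bigr) + o(t^{h-c/24+n}),
\]
the leading correction being the character of the submodule generated by the minimal singular vector; this is exactly the asserted relation $ch(L(c,h)) \sim t^{h-c/24}\chi_{NS}(t)(1-t^{n})$.

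Conversely, given $ch(L(c,h)) \sim t^{h-c/24}\chi_{NS}(t)(1-t^{n})$, I would use $\dim L_{m}(c,h) = d(m) - \dim K_{m}(c,h)$ together with Lemma \ref{lem22}: the hypothesis forces $K_{m}(c,h)=0$ for every $m<n$ and $K_{n}(c,h)\neq 0$. By Proposition \ref{cheval}, the vanishing of $K_{m}$ below level $n$ rules out any singular vector of level $<n$. At the minimal level $n$ with $K_{n}\neq 0$, any nonzero $v\in K_{n}(c,h)$ is automatically singular: since $K(c,h)=\bigoplus_{m}K_{m}(c,h)$ is a submodule, $\Vir_{1/2}^{+}.v\subseteq K(c,h)$ lies in levels $<n$ and hence vanishes. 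Thus $v$ is a singular vector of minimal level $n$.

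The main obstacle will be controlling the coefficient of the leading correction, since a priori $\dim K_{n}(c,h)$ could exceed one, in which case that correction would carry the corresponding multiplicity. Here the relation $\sim$ is to be read as recording the level at which $ch(L(c,h))$ first departs from $ch(V(c,h))$; what both directions genuinely establish is the equivalence between this level being $n$ and the existence of a minimal singular vector at level $n$. The remaining routine care is to verify that the alternating sum of Remark \ref{coro2} converges coefficientwise and that each nonleading term is supported in strictly higher levels, which is immediate from the level bookkeeping above.
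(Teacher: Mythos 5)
Your proof is correct and follows exactly the route the paper intends: the corollary is stated there as an immediate consequence of Lemma \ref{lem22}, the formula $ch(V^{s}(c,h))=t^{n+h-\frac{c}{24}}\chi_{NS}(t)$ and the inclusion--exclusion expansion of Remark \ref{coro2}, which is precisely the machinery you invoke, and your reverse direction via Proposition \ref{cheval} and the vanishing of $K_{m}(c,h)$ below level $n$ is the standard completion. Your reading of $\sim$ as recording the first level at which $ch(L(c,h))$ departs from $ch(V(c,h))$ matches the paper's usage of that symbol, and your explicit flagging of the possible multiplicity of level-$n$ singular vectors is a legitimate caveat the paper silently glosses over.
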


\subsection{Proof of the theorem}
\begin{proposition} \label{propo}
For a fixed $c$, $det_{n}$ is polynomial in $h$ of degree
\begin{displaymath} M= \sum_{\stackrel{0  < pq/2 \le n}{p \equiv q \lbrack 2 \rbrack}}d(n-pq/2) \end{displaymath}
\end{proposition}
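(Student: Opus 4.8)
The plan is to read off the degree in $h$ directly from the combinatorial structure of the sesquilinear form, and then to match the resulting count with $M$ by a generating-function identity. First I would record polynomiality. For two basis vectors $u,v$ of $V_{n}(c,h)$ one evaluates $(u,v) = (\Omega, u^{\star}v\,\Omega)$ by moving each positive-mode operator of $u^{\star}$ to the right through the creation operators of $v$, using the three relations of $\Vir_{1/2}$; at every step a bracket either strictly decreases the number of operators or produces an $L_{0}$, which is then replaced by its eigenvalue $h+(\textrm{level})$, while $C$ is replaced by $c$. Hence for fixed $c$ each entry of $M_{n}(c,h)$ lies in $\CCC[h]$, so $det_{n}(c,h)\in\CCC[h]$.

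The key lemma is a degree bound on the entries. Write $\ell(v)=\alpha+\beta$ for the total number of factors $L_{-i_{s}},G_{-j_{s}}$ in $v$. I claim $\deg_{h}(u,v)\le\frac{1}{2}(\ell(u)+\ell(v))$, with equality if and only if $u=v$, in which case the leading coefficient is a positive constant, namely the product of the factors $2i$ arising from $[L_{i},L_{-i}]$ and $2$ arising from $[G_{r},G_{-r}]_{+}$. The point is that a power of $h$ is created only by a diagonal contraction yielding an $L_{0}$; such a contraction consumes one operator of $u^{\star}$ and one of $v$, so in any term of the full contraction the number of $h$-factors is at most half the total number $\ell(u)+\ell(v)$ of operators. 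Attaining equality forces every operator to be paired diagonally ($L_{i}$ with $L_{-i}$, $G_{r}$ with $G_{-r}$), which is possible only when the index data of $u$ and $v$ coincide, i.e. $u=v$ by the canonical ordering of basis vectors. This lemma, and precisely its ``equality iff $u=v$'' half, is the main obstacle, since it requires a careful accounting of which contraction patterns realise the maximal number of $L_{0}$-factors.

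Granting the lemma, expand $det_{n}=\sum_{\sigma}\mathrm{sgn}(\sigma)\prod_{v}(u_{\sigma(v)},v)$. Each permutation term has $h$-degree at most $\sum_{v}\frac{1}{2}(\ell(\sigma(v))+\ell(v))=\sum_{v}\ell(v)$, because $\sigma$ permutes the basis and so $\sum_{v}\ell(\sigma(v))=\sum_{v}\ell(v)$; moreover equality in this bound forces $\sigma(v)=v$ for all $v$, i.e. $\sigma=\mathrm{id}$. Thus the top-degree contribution is the single diagonal product $\prod_{v}(v,v)$, with no cancellation and positive leading coefficient, whence $\deg_{h}det_{n}=\sum_{v\in V_{n}}\ell(v)$.

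Finally I would identify this sum with $M$ by generating functions. Marking each factor gives
\begin{displaymath}
\sum_{n}\Big(\sum_{v\in V_{n}}\ell(v)\Big)t^{n}=\chi_{NS}(t)\Big(\sum_{k\in\NNN^{\star}}\frac{t^{k}}{1-t^{k}}+\sum_{k\in\NNN^{\star}}\frac{t^{k-1/2}}{1+t^{k-1/2}}\Big),
\end{displaymath}
whereas, by definition of $M$ and of $\chi_{NS}$, one has $\sum_{n}M\,t^{n}=\chi_{NS}(t)\sum_{p\equiv q[2]}t^{pq/2}$. Summing the last series over $q$ of fixed parity yields $\sum_{a}\frac{t^{2a}}{1-t^{2a}}+\sum_{a}\frac{t^{(2a-1)/2}}{1-t^{2a-1}}$; splitting the first sum on the right-hand side above into even and odd $k$ and cancelling the common even part, the required identity reduces, after the substitution $s=t^{(2a-1)/2}$ in each odd term, to the elementary equality $\frac{s}{1-s^{2}}=\frac{s^{2}}{1-s^{2}}+\frac{s}{1+s}$. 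This closes the argument and gives $\deg_{h}det_{n}=M$.
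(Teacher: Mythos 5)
Your argument is correct, and it shares its skeleton with the paper's proof: both reduce the statement to the identity
\begin{displaymath}
\deg_{h} det_{n}=\sum_{v\in V_{n}(c,h)}\ell(v)=M ,
\end{displaymath}
via the observation that only the product of the diagonal entries contributes to the top power of $h$. Where the paper simply declares this diagonal dominance ``clear'' (the inductive lemma you state is in fact written out later, in the proof of Proposition \ref{propos}), you spell out the contraction-counting argument, which is welcome. The genuine divergence is in the combinatorial half. The paper evaluates $\sum_{v}\ell(v)$ by introducing $m_{n}(p,q)$, the number of super-partitions of $n$ in which $p/2$ occurs exactly $q$ times, expressing it through $d$ by a telescoping sum (bosonic $p$) or an inclusion--exclusion recursion (fermionic $p$), and then cancelling the cross-parity terms $(p,q)\leftrightarrow(q,p)$ by hand. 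You instead take the logarithmic-derivative generating function of $\chi_{NS}$ and match it against $\chi_{NS}(t)\sum_{p\equiv q[2]}t^{pq/2}$, reducing everything to the identity $\frac{s}{1-s^{2}}=\frac{s^{2}}{1-s^{2}}+\frac{s}{1+s}$; this is shorter and makes the parity cancellation transparent, while the paper's route is more elementary and bijective. One small correction: the leading coefficient of a diagonal entry $(v,v)$ is not merely the product of the factors $2i$ and $2$ --- repeated factors $L_{-i}$ contribute extra multiplicities (e.g. $(L_{-1}^{k}\Omega,L_{-1}^{k}\Omega)$ has leading term $2^{k}k!\,h^{k}$) --- but since only its positivity and independence of $c$ are used, nothing in your argument is affected.
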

\begin{proof}
It's clear that only the product of the diagonal entries of $M_{n}(h,c)$ gives a non-zero contribution to the highest power of $h$ (and that its coefficient is $>0$ and independent of $c$);
and that $M$ is the sum of possibles $\sum m_{i} + \sum n_{j} $ such that $\sum im_{i} + \sum jn_{j} = n$ with $i \in \NNN + \1/2 $, $j \in \NNN$, $m_{i} \in \{  0,1\} $,  $n_{j} \in \NNN $. \\
Let $m_{n}(p,q)$ be the number of such partitions of $n$, in which $p/2 $ appears exactly $q$ times; then, $M = \sum_{0  < pq/2 \le n}q.m_{n}(p,q)$.  \\
Now, if $p \equiv 0 [ 2 ] $, the number of such partitions in which $p/2$ appears $\ge q $ times is $d(n-pq/2)$; so, $m_{n}(p,q) = d(n-pq/2) - d(n-p(q+1)/2)$. \\
If $p \equiv 1 [ 2 ] $, then,  $m_{n}(p,q) = 0$ if $q>1$ and $m_{n}(p,1) = d(n-p/2) -  m_{n-p/2}(p,1)$; 
so, by induction, $m_{n}(p,1) = \sum_{q} (-1)^{q+1}d(n-pq/2) $,  \\  where $d(0) = 1 $ and $d(k) = 0$ if $k<0$.   Now:  

\begin{displaymath}  M = \sum_{\stackrel{0  < pq/2 \le n}{p \equiv 0 \lbrack 2 \rbrack} } q.m_{n}(p,q) +   \sum_{\stackrel{0  <   p/2 \le n}{p \equiv 1 \lbrack 2 \rbrack} } m_{n}(p,1)   \end{displaymath}    \begin{displaymath}  
= \sum_{\stackrel{0  < pq/2 \le n}{p \equiv 0 \lbrack 2 \rbrack} } q.(d(n-pq/2) - d(n-p(q+1)/2)) +   \sum_{\stackrel{0  < p/2 \le n}{p \equiv 1 \lbrack 2 \rbrack} } (\sum_{q} (-1)^{q+1}d(n-pq/2))   \end{displaymath}    \begin{displaymath}  
=  \sum_{\stackrel{0  < pq/2 \le n}{p \equiv 0 \lbrack 2 \rbrack} } d(n-pq/2) +  \sum_{\stackrel{0  < pq/2 \le n}{p \equiv 1 \lbrack 2 \rbrack} }(-1)^{q+1}d(n-pq/2) \end{displaymath}
Finally, the $(p,q)$ term with $q \equiv 1 [ 2 ]$ of the first sum, vanishes with the $(p',q') = (q,p)$ term of the second, so the result follows.
\end{proof}
\begin{lemma}
If $t \mapsto A(t)$ is a polynomial mapping  into $d \times d$ matrices and $dim(ker A(t_{0} )) = k $, then $(t-t_{0})^{k}$ divides $det (A(t)) $.
\end{lemma}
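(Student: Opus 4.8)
The plan is to reduce the statement to a fact about the \emph{columns} of $A(t)$ via a single \emph{constant} change of basis, and then to exploit the multilinearity of the determinant in its columns. The hypothesis $dim(ker A(t_{0})) = k$ furnishes $k$ linearly independent vectors $v_{1}, \dots, v_{k} \in \CCC^{d}$ with $A(t_{0})v_{i} = 0$; I would extend these to a basis $v_{1}, \dots, v_{d}$ of $\CCC^{d}$ and assemble the constant invertible matrix $P = (v_{1} \mid \dots \mid v_{d})$.

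First I would examine the product $A(t)P$, whose $i$-th column is the vector-valued polynomial $t \mapsto A(t)v_{i}$. For $1 \le i \le k$ this column vanishes at $t = t_{0}$, since $A(t_{0})v_{i} = 0$; hence every entry of that column is a scalar polynomial in $\CCC[t]$ divisible by $(t - t_{0})$, so that $A(t)v_{i} = (t-t_{0})\, w_{i}(t)$ with each $w_{i}(t)$ a column of polynomials. The first $k$ columns of $A(t)P$ thus each carry a factor $(t-t_{0})$.

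Next, factoring the scalar $(t-t_{0})$ out of each of these $k$ columns and using that $det$ is multilinear in the columns, I obtain $det(A(t)P) = (t-t_{0})^{k}\, q(t)$ for some $q \in \CCC[t]$. Since $P$ is a constant matrix, $det(A(t)P) = det(A(t))\cdot det(P)$ with $det(P) \ne 0$ a nonzero scalar; dividing through yields $det(A(t)) = (t-t_{0})^{k}\, q(t)/det(P)$, so $(t-t_{0})^{k}$ divides $det(A(t))$, as claimed.

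The argument is essentially formal, so there is no serious obstacle; the one point demanding care is that $P$ be chosen \emph{independent of} $t$, so that $det(P)$ is a nonzero constant rather than a polynomial that could reintroduce or cancel powers of $(t-t_{0})$. This is exactly the divisibility fact needed to pass from the local kernel dimensions $d(n-pq/2)$ to the multiplicities appearing in the Kac determinant formula (Theorem \ref{kac}), applied with $A(t) = M_{n}(c,h)$ viewed as a polynomial in $h$ for fixed $c$.
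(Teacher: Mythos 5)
Your argument is correct and is essentially the paper's own proof: the paper likewise takes a basis whose first $k$ vectors span $\ker A(t_{0})$, observes that $(t-t_{0})$ divides each of the corresponding columns $A(t)v_{i}$, and concludes by multilinearity of the determinant. You have merely made explicit the constant change-of-basis matrix $P$ and the factor $\det(P)\ne 0$, which the paper leaves implicit.
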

 \begin{proof}
 Take a basis $v_{i}$ such that $A(t_{0})v_{i} = 0$ for $i = 1 \dots k$. \\ Thus, $(t-t_{0})$ divides $A(t)v_{i}$ for $i = 1 \dots k$, and $(t-t_{0})^{k}$ divides $det(A(t))$.
\end{proof}
\begin{lemma} \label{lemma2} Consider $det_{n}(c,h)$ as polynomial in $h$ for $c$ fixed.  If $n'$ is minimal such that $det_{n'}$ vanishes at $ h = h_{0}$, then $(h-h_{0})^{d(n-n')} $ divides $det_{n}$.
\end{lemma}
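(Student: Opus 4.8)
The plan is to convert the purely algebraic vanishing of $det_{n'}$ at the minimal level $n'$ into the existence of a singular vector of level $n'$, then let that singular vector inflate the kernel of $M_n(c,h_0)$ to dimension at least $d(n-n')$, and finally feed this into the preceding lemma on polynomial matrix maps to extract the divisibility by $(h-h_0)^{d(n-n')}$.

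First I would fix $c$ and exploit minimality of $n'$. By hypothesis $det_{m}(c,h_0) \neq 0$ for every level $m$ with $0 \le m < n'$, so the form $(\,.\,,.\,)$ is nondegenerate at all these lower levels, i.e.\ $K_{m}(c,h_0) = \{0\}$. Since $det_{n'}(c,h_0) = 0$, the matrix $M_{n'}(c,h_0)$ is degenerate, so there exists a nonzero $v \in K_{n'}(c,h_0)$.

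The key step is to promote this null vector to a \emph{singular} vector, and this is where I expect the only real subtlety to lie: it is precisely the minimality of $n'$ that forces it. For any $k > 0$ and any $w \in V_{n'-k}(c,h_0)$, the relation $L_k^{\star} = L_{-k}$ gives $(L_k v, w) = (v, L_{-k} w) = 0$, because $L_{-k} w \in V_{n'}(c,h_0)$ while $v$ is orthogonal to all of $V_{n'}(c,h_0)$; hence $L_k v \in K_{n'-k}(c,h_0) = \{0\}$, so $L_k v = 0$. The identical computation with $G_r^{\star} = G_{-r}$ (the super sign being irrelevant since we only conclude vanishing) gives $G_r v = 0$ for all $r > 0$. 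Thus $\Vir_{1/2}^{+}.v = \{0\}$, and by remark \ref{shortdef} (it already suffices that $G_{1/2}.v = G_{3/2}.v = 0$) the vector $v$ is singular of level $n'$.

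Finally I would propagate the singular vector up to level $n$. Since $v$ is singular of level $n' > 0$, it generates a \emph{proper} submodule $V^{s}(c,h_0)$ (it does not contain $\Omega$), which therefore lies inside the maximal proper submodule $K(c,h_0) = \bigoplus_{m} \ker M_{m}(c,h_0)$; concretely this is the radical characterization underlying Proposition \ref{cheval}. Intersecting with level $n$ yields $V^{s}_{n}(c,h_0) \subseteq K_{n}(c,h_0)$, and by the lemma computing $\dim V^{s}_{n}(c,h) = d(n-n')$ we obtain $\dim \ker M_{n}(c,h_0) \ge d(n-n')$. Applying the preceding lemma to the polynomial matrix map $h \mapsto M_{n}(c,h)$ at $h_0$, with $k = \dim\ker M_{n}(c,h_0) \ge d(n-n')$, shows that $(h-h_0)^{k}$, and a fortiori $(h-h_0)^{d(n-n')}$, divides $det_{n}(c,h)$.
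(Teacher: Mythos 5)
Your proof is correct and follows essentially the same route as the paper's: produce a singular vector at the minimal level $n'$, observe that the submodule it generates contributes a $d(n-n')$-dimensional subspace of $\ker M_{n}(c,h_{0})$, and invoke the preceding lemma on polynomial matrix maps. The only difference is that you carefully justify the step the paper dismisses with ``Clearly'', namely that minimality of $n'$ forces a null vector at level $n'$ to be annihilated by all raising operators and hence to be singular.
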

\begin{proof}
Clearly $V(c, h_{0})$ admits a singular vector $s$ of level $n'$. \\ Now, $V_{n}^{s}(c,h_{0})$ is $d(n-n')$ dimensional, and is contained in $ker (M_{n}(c,h_{0}))$. \\ So, the result follows by previous lemma.
\end{proof}
\begin{lemma} \label{lemma3}
$det_{n}$ vanishes at $h_{pq}^{c}$,  for $0 < pq/2 \le n $, $p \equiv q  [ 2 ] $. 
\end{lemma}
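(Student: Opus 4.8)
The plan is to reduce the vanishing of $det_n$ on the whole curve $h = h_{pq}^c$ to its vanishing at the infinitely many discrete-series points $(c_m, h_{pq}^m)$ lying on that curve, and to obtain the latter from a singular vector supplied by the coset construction. Recall that $det_n(c,h) = 0$ exactly when the form $(.,.)$ is degenerate on $V_n(c,h)$, i.e. $K_n(c,h) \ne 0$; by Proposition \ref{cheval} a nonzero element of $K_n(c,h)$ is produced by a singular vector of level $n' \le n$. So it suffices, for each admissible $(p,q)$ with $0 < pq/2 \le n$, to exhibit a singular vector of level $\le pq/2$ at enough points of $\{\varphi_{pq} = 0\}$.

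First I would fix $(p,q)$ and work at the discrete points $(c_m, h_{pq}^m)$. By Corollary \ref{cha} the coset estimate gives $ch(L(c_m, h_{pq}^m)) \le ch(M_{pq}^m) \sim t^{h_{pq}^m - c_m/24}\chi_{NS}(t)(1 - t^{pq/2} - t^{p'q'/2})$, which is strictly smaller than $ch(V(c_m,h_{pq}^m)) = t^{h_{pq}^m - c_m/24}\chi_{NS}(t)$. Hence $K(c_m,h_{pq}^m) \ne 0$, and by Corollary \ref{coro3} the Verma module $V(c_m, h_{pq}^m)$ carries a singular vector $s$ of minimal level $n' \le \min(pq/2, p'q'/2) \le pq/2$. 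Since $0 < pq/2 \le n$ we get $n' \le n$, so the submodule $V^s(c_m,h_{pq}^m)$ meets level $n$ in a subspace $V_n^s$ of dimension $d(n-n') > 0$, which sits inside $K_n(c_m, h_{pq}^m)$. Therefore $det_n(c_m, h_{pq}^m) = 0$.

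Next I would pass from these points to the entire curve. The entries of $M_n(c,h)$ are computed from the relations of $\Vir_{1/2}$ and are polynomials in $c$ and $h$, so $det_n \in \CCC[c,h]$. For the fixed $(p,q)$, every integer $m$ with $m \ge \max(2, p+1, q-1)$ and $p \equiv q\,[2]$ satisfies the discrete-series constraints, giving infinitely many distinct points $(c_m, h_{pq}^m)$, each a zero of $\varphi_{pq}$ since $h_{pq}^m = h_{pq}^{c_m}$. Thus $det_n$ vanishes at infinitely many points of the irreducible plane curve $\{\varphi_{pq} = 0\}$. By irreducibility of $\varphi_{pq}$ together with a Bezout-type argument (a polynomial vanishing at infinitely many points of an irreducible curve must be divisible by its defining polynomial), $\varphi_{pq}$ divides $det_n$, and in particular $det_n(c, h_{pq}^c) = 0$ for every $c$.

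The \emph{main obstacle} is the input at the discrete points: the whole argument rests on the strict character inequality $ch(L(c_m,h_{pq}^m)) \le ch(M_{pq}^m)$ coming from the GKO construction (Corollary \ref{cha}), which is what forces a singular vector of level $\le pq/2$ to exist, the delicate part being that the first character drop occurs at level $\min(pq/2, p'q'/2)$ and that this is correctly read off through Corollary \ref{coro3}. The concluding step, promoting vanishing at infinitely many discrete points to divisibility by $\varphi_{pq}$, is then a routine consequence of the irreducibility of $\varphi_{pq}$.
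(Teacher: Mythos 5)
Your proposal is correct and follows essentially the same route as the paper: the GKO character bound (Corollary \ref{cha}) combined with Corollary \ref{coro3} yields a singular vector of level $\le \min(pq/2,p'q'/2)$ at each point $(c_m,h_{pq}^m)$, forcing $K_n(c_m,h_{pq}^m)\ne 0$ and hence $det_n(c_m,h_{pq}^m)=0$ for all sufficiently large $m$, after which the irreducibility of $\varphi_{pq}$ promotes vanishing at infinitely many points of the curve to divisibility of $det_n$ by $\varphi_{pq}$. The only difference is that you spell out the polynomiality of $det_n$ and the admissibility range of $m$ more explicitly than the paper does, which is a harmless elaboration.
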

\begin{proof} Let  $m \ge 2 $ integer,  $1 \le p \le m-1 $, $1 \le q \le m+1$,  $p \equiv q [ 2 ]$. \\
Thanks to GKO construction, we have corollary \ref{cha}:
\begin{displaymath}  ch( L(c_{m},h_{pq}^{m}) ) \le ch( M_{pq}^{m}   )  \sim t^{ - \frac{c_{m}}{24} }. \chi_{NS}(t).t^{h_{pq}^{m}  }.(1- t^{\frac{pq}{2}} - t^{\frac{p'q'}{2}})
\end{displaymath}
So, $V(c_{m},h_{pq}^{m})$ admits a singular vector at level $\le min(pq/2 , p'q'/2)$ by  corollary \ref{coro3}, and then,  $dim(ker (M_{n}(c_{m},h_{pq}^{m}))) > 0$ for $n \ge pq/2$.  Hence, $det_{n}$ vanishes at $h_{pq}^{m}$ for $m$ sufficiently large integer.
But then, $det_{n}$ vanishes at infinite many zeros of the irreducible $\varphi_{pq}$, which so, divides $det_{n}$.
\end{proof}
\paragraph{Proof of the theorem \ref{kac}} By lemma \ref{lemma2} and \ref{lemma3}, $det_{n}$ is divisible by $d_{n}(c,h) = \prod_{\stackrel{0  < pq/2 \le n}{p \equiv q \lbrack 2 \rbrack} } (h - h_{pq}^{c} )^{d(n-pq/2)}$ since the $h_{pq}^{c}$ are distincts for generic $c$. \\ Now, by proposition \ref{propo}, $det_{n}$ and $d_{n}$ have the same degree $M$, and the coefficient of $h^{M}$ is $ > 0$ and independent of $c, h$. 
So, the result follows.  $\Box$

\newpage
\section{Friedan-Qiu-Shenker unitarity criterion}
\subsection{Introduction}
Recall section 2.3 of \cite{NSOAI} for definitions of Verma module $V(c,h)$, sesquilinear form $(. , .)$ and ghost. 
The goal of this section is to give a proof of the FQS theorem for the Neveu-Schwarz algebra, in a parallel way that \cite{4d} give for the Virasoro algebra, expoiting Kac determinant formula: 
\begin{displaymath}
det_{n}(c,h) = A_{n}  \prod_{\stackrel{0  < pq/2 \le n}{p \equiv q \lbrack 2 \rbrack} } (h - h_{pq}^{c} )^{d(n-pq/2)} 
\end{displaymath} with $A_{n} > 0$ independent of $c$ and $h$.
\begin{lemma} \label{h} If $ V(c,h)$ admits no ghost then $ c,h \ge 0 $  \end{lemma}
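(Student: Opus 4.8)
The plan is to extract the two necessary conditions $h\ge 0$ and $c\ge 0$ by testing the sesquilinear form on suitably chosen vectors of low level and forcing positivity there. Recall that if $V(c,h)$ has no ghost, then the form $(\cdot,\cdot)$ is positive semi-definite on every $V_n(c,h)$; in particular $(v,v)\ge 0$ for each individual vector $v$, and each diagonal entry $M_n(c,h)_{ii}=(v_i,v_i)$ of the Gram matrix must be $\ge 0$.

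First I would read off $h\ge 0$ from level $\tfrac12$. The computation in the \textbf{Examples} above gives $M_{\1/2}(c,h)=(G_{-\1/2}\Omega,G_{-\1/2}\Omega)=(2h)$, so the no-ghost hypothesis forces $2h\ge 0$, i.e. $h\ge 0$. This is immediate and requires no new calculation.

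Next I would extract $c\ge 0$. The level-$\tfrac12$ and level-$1$ data only involve $h$, so $c$ must be reached from a higher level; the natural candidate is level $\tfrac32$, where $c$ first appears. From the displayed matrix $M_{\3/2}(c,h)$, the diagonal entry $(G_{-\3/2}\Omega,G_{-\3/2}\Omega)=2h+\tfrac23 c$ must be $\ge 0$, giving $c\ge -3h$; this alone is not enough. The cleanest route is to use that $M_{\3/2}(c,h)$ is positive semi-definite, hence $\det_{\3/2}(c,h)=8h\,(h-h_{13}^c)(h-h_{31}^c)\ge 0$ together with nonnegativity of the principal minors. To isolate $c$ I would let $h\to 0^{+}$: as $h\downarrow 0$ the first diagonal entry $2h+4h^2\to 0$, and positive semi-definiteness of a $2\times2$ matrix with a vanishing diagonal entry forces the corresponding off-diagonal entry $4h$ to vanish faster and the other diagonal entry $2h+\tfrac23 c$ to stay $\ge 0$ in the limit, yielding $\tfrac23 c\ge 0$, i.e. $c\ge 0$. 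Equivalently, one evaluates the $2\times2$ semidefiniteness condition $\det_{\3/2}\ge 0$ near $h=0$: since $\det_{\3/2}=8h(h^2-(\tfrac32-\tfrac{c}{3})h+\tfrac{c}{6})$ and the bracket tends to $\tfrac{c}{6}$ as $h\to 0^+$ while $8h>0$, nonnegativity across the whole region $h>0$ forces $c\ge 0$.

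The main obstacle is the second step: unlike $h\ge 0$, the bound $c\ge 0$ cannot be read off from a single Gram entry, so one must genuinely use the matrix positivity at level $\tfrac32$ and take a careful limit as $h\to 0^{+}$ to decouple $c$ from $h$. Care is needed because at $h=0$ the vector $G_{-\1/2}\Omega$ itself becomes null, so the argument must be phrased as a one-sided limit over $h>0$ (which the previous step has already established is the relevant range) rather than an evaluation exactly at $h=0$. Once positivity of $M_{\3/2}$ is correctly exploited in this limit, the inequality $c\ge 0$ follows and the lemma is proved.
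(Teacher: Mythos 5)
Your first step is fine and is essentially the paper's: the paper computes $(L_{-n}\Omega,L_{-n}\Omega)=2nh+\frac{n(n^{2}-1)}{12}c\ge 0$ and takes $n=1$ to get $h\ge 0$, which is the same as your reading of $M_{1/2}=(2h)$. The second step, however, has a genuine gap. The lemma is a statement about a \emph{single} Verma module $V(c,h)$ with $c$ and $h$ fixed, so the hypothesis gives you positive semi-definiteness of $M_{3/2}(c,h)$ only at that one point; you are not entitled to let $h\to 0^{+}$ or to demand ``nonnegativity across the whole region $h>0$''. Varying $h$ silently upgrades the hypothesis from ``$(c,h_{0})$ is ghost-free'' to ``$(c,h)$ is ghost-free for all $h>0$'', which is not what is given.

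Moreover, the gap cannot be repaired while staying at level $\tfrac{3}{2}$: there are pairs with $c<0$ for which $M_{3/2}(c,h)$ is positive semi-definite. For instance at $(c,h)=(-1,10)$ the diagonal entries $2h+4h^{2}=420$ and $2h+\tfrac{2}{3}c=\tfrac{58}{3}$ are positive and $\det_{3/2}=8h\bigl(h^{2}-(\tfrac{3}{2}-\tfrac{c}{3})h+\tfrac{c}{6}\bigr)=80\cdot\tfrac{489}{6}>0$, so no ghost is visible at this level even though $c<0$. The ghost only appears at high level, which is exactly why the paper's proof uses the one-parameter family of vectors $L_{-n}\Omega$: in $(L_{-n}\Omega,L_{-n}\Omega)=2nh+\frac{n(n^{2}-1)}{12}c$ the central term grows like $n^{3}$ while the $h$ term grows like $n$, so taking $n$ very large forces $c\ge 0$ for the fixed $(c,h)$ without any limiting argument in $h$. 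You should replace your level-$\tfrac{3}{2}$ limit by this (or an equivalent) unboundedly-high-level computation.
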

\begin{proof}   Since $L_{n}L_{-n} \Omega =  L_{-n}L_{n} \Omega +2nh \Omega +c \frac{n(n^{2}-1)}{12}\Omega$,
  \\Ê we have $(L_{-n} \Omega,L_{-n} \Omega)  = 2nh + \frac{n(n^{2}-1)}{12}c \ge 0$.
   \\  Now, taking $n$ first equal to $1$ and then very large, we obtain the lemma.  \end{proof}
\begin{proposition} \label{propos} If $h \ge 0 $ and $c \ge 3/2$ then $V(c,h)$ admits no ghost.
\end{proposition}
Now, it suffices to classify no ghost cases for $h \ge 0 $ and $0 \le c < 3/2$. 
\begin{lemma} $ m \mapsto c_{m}$ is an inscreasing bijection from $[2,+ \infty [ $ to $ [0 , 3/2[ $. 
  \end{lemma}
 The FQS theorem gives as necessary condition exactly the same discrete series that GKO construction gives as sufficient condition (theorem \ref{suff}): 
\begin{theorem} (FQS unitary criterion) \\  \label{FQS}
Let  $h \ge 0 $ and $0 \le c < 3/2$; $V(c,h)$ admits ghost if $(c,h)$ does not belong to: 
\begin{displaymath} c = c_{m}  =  \frac{3}{2} (1 - \frac{ 8}{m(m+2) }),    \quad  h=  h_{p,q}^{m} =\frac{[(m+2)p-mq]^{2}-4}{8m(m+2)} \end{displaymath}
with integers  $m \ge 2$, \  $1 \le p \le m-1$, \   $1 \le q \le  m+1 $  and  $p \equiv q [2]$.
\end{theorem}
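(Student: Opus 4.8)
The plan is to exploit the Kac determinant formula (Theorem \ref{kac}) together with a continuity-and-transversality argument that tracks a negative-norm vector (a ghost) through the parameter region $0 \le c < 3/2$, $h \ge 0$, exactly in the spirit of what FQS do for $\Vir$. First I would set up the geometry of the vanishing locus. By Theorem \ref{kac}, $det_n(c,h)$ vanishes precisely on the union of the curves $C_{pq}\colon h = h_{pq}^c$ with $0 \neq p \equiv q\,[2]$; off this locus, on each connected open region, the signature of the form $(\cdot,\cdot)$ restricted to $V_n(c,h)$ is locally constant, since an eigenvalue can change sign only by passing through zero, which forces $det_n = 0$. By Proposition \ref{propos} there is no ghost throughout $c \ge 3/2$, $h \ge 0$, and on the open region $c > 3/2$, $h > 0$ the form is even positive definite (no determinant vanishes there); this is the anchor of the whole argument.

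Next I would eliminate the open regions. For each open region of $\{0 \le c < 3/2,\ h \ge 0\}$ cut out by the curves $C_{pq}$, I would exhibit a level $n$ at which $det_n$ is negative. Because the prefactor $A_n$ is positive, the sign of $det_n$ is governed entirely by the signs of the factors $(h - h_{pq}^c)^{d(n-pq/2)}$, so crossing $C_{pq}$ flips the sign of $det_n$ exactly when $d(n-pq/2)$ is odd. Counting these flips starting from the positive-definite anchor region lets me locate, in every open region, a level with $det_n < 0$; an odd number of negative eigenvalues forces a ghost, so all open regions are removed.

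The remaining and most delicate task concerns the curves themselves. I would restrict to the arcs $C'_{pq}$ running from $c = 3/2$ to the first curve that meets $C_{pq}$ at level $pq/2$ (its first intersector), and verify from the explicit quadratic defining $h_{pq}^c$, together with the congruence $p \equiv q\,[2]$, that these first-intersection points are precisely the discrete-series pairs $(c_m, h_{pq}^m)$; this is a direct, if bookkeeping-heavy, computation. It then remains to produce a ghost at every point of $C'_{pq}$ lying strictly between two consecutive discrete-series points. For this I would start in the no-ghost region and move toward such an interior point, crossing en route a well-chosen curve along which the relevant $det_n$ vanishes to order $1$; the single sign change deposits exactly one negative eigenvalue on the far side. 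I would then transport this ghost along the curve up to each transversal intersection with another curve and argue that, because the crossing is transversal, the one-dimensional kernels attached to the two curves are distinct, so the tracked negative-norm direction does not degenerate to a null vector and survives as a genuine ghost. Iterating this scheme reaches the target interval without losing the ghost.

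The hard part will be the transversality step. I must establish that distinct curves $C_{pq}$ and $C_{p'q'}$ meet transversally and that at such a crossing the eigenvalue I am tracking is not the one whose eigenvector lies in the kernel newly created by the second curve. Concretely this amounts to controlling the local behaviour of the eigenvalues of $M_n(c,h)$ near an intersection point and showing that the kernel directions of the two vanishing factors are linearly independent, so that exactly one eigenvalue vanishes to first order along each curve and the ghost I carry is borne by an eigenvalue that stays strictly negative through the crossing. Once this local non-degeneracy is in hand, the global tracking argument closes; combined with the open-region elimination and the positive-definite anchor of Proposition \ref{propos}, every $(c,h)$ off the discrete series acquires a ghost, and together with the sufficiency Theorem \ref{suff} this yields Theorem \ref{satya} as well.
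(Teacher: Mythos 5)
Your proposal follows essentially the same route as the paper: anchor in the positive-definite region $c>3/2$, $h>0$ (Proposition \ref{propos}), eliminate the open regions between the curves $C_{pq}$ by exhibiting a level at which the Kac determinant is negative, identify the first-intersection points of the arcs $C'_{pq}$ with the discrete series, and then transport a ghost from the no-ghost region along order-one vanishing curves through transversal crossings to kill the intervals between discrete-series points. The step you flag as the hard part --- showing that at a transversal crossing the tracked negative direction does not fall into the newly created kernel --- is exactly what the paper's Proposition \ref{singe} establishes via a block decomposition of $M_{n'}$ whose upper-left block is non-degenerate by Lemma \ref{biz}, so your plan is a faithful outline of the paper's proof.
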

\begin{remark} Combining theorem \ref{suff} and lemma \ref{h}, we see that  $h_{pq}^{m} \ge 0 $
\end{remark}

\subsection{Proof of proposition \ref{propos}}  
 \begin{proof} By continuity, it suffices to treat the region $R = \{ h > 0 \ , \  c >  3/2 \} $. \\ Now, we see that $(c ,h_{pq}^{c}) \not \in R$,  
so by Kac determinant formula (theorem \ref{kac}), $det_{n}(c,h) $ is nowhere zero on $R$. So, it suffices to prove that the form is positive for one pair $(c,h) \in R$. 

If $\alpha = (a_{1}, ... , a_{r_{1}} ; b_{1}, ... , b_{r_{2}}) $, let $n(\alpha) = \sum a_{i} + \sum{b_{j}}$,  $r(\alpha) = r_{1} + r_{2}$. \\ Let  $u_{\alpha} = A_{\alpha}\Omega$, with $A_{\alpha}$ the product of $L_{-a_{i}}$ and $G_{-b_{j}}$ in the following order: \\   if $n \le m$ then $L_{-n}$ or $G_{-n}$ is before $L_{-m}$ or $G_{-m}$; example: $G_{-1/2}L_{-1}^{2}G_{-5/2} \Omega$. \\ $(u_{\alpha})$ form a basis of $V(c,h)$. \\ Now, thanks to this order, we easily prove by induction on $n(\alpha) + n(\beta)$ that:    
\begin{displaymath} (u_{\alpha}, u_{\beta}) = \left\{ \begin{array}{lr}  c_{\alpha} h^{r(\alpha)} (1 + o(1)) \ \  \textrm{with} \ c_{\alpha} > 0  &  \textrm{if} \  \alpha = \beta       \\     o(h^{(r(\alpha)+r(\beta))/2})    &  \textrm{if} \  \alpha \ne \beta        \end{array}  \right.      \end{displaymath}  So, $\forall n \in \1/2\NNN $ and $\forall u \in V_{n}(c,h)$,  $u=\sum_{n(\alpha ) = n} \lambda_{\alpha}u_{\alpha}$ and: 
\begin{displaymath} (u,u) = \sum_{\alpha , \beta}  \lambda_{\alpha} \bar{\lambda_{\beta}} (u_{\alpha},u_{\beta})  =  \sum_{\alpha} \vert   \lambda_{\alpha} \vert^{2} (u_{\alpha},u_{\alpha}) + \1/2 \sum_{\alpha \ne  \beta}  Re(\lambda_{\alpha} \bar{\lambda_{\beta}}) (u_{\alpha},u_{\beta})   > 0 \end{displaymath} for $h$ sufficiently  large and independent of  $u$. \\
Then, the form is positive for $h$ large, and so is $\forall (c,h) \in R$ by continuity.  \end{proof}

\subsection{Proof of  theorem \ref{FQS}}
\begin{definition}Let $C_{pq}$ be the  curve $h = h_{pq}^{c}$ with  $0 \ne p \equiv q [ 2 ]$.
\end{definition}
\begin{remark}
$C_{pq}$ intersects the line $c = 3/2$ at $h = \frac{(p-q)^{2}}{8} = lim_{m \to \infty} (h_{pq}^{m}) $. 
 For $ 0 \le c < \frac{3}{2}$, we see the curve as $(c_{m}, h_{pq}^{m})$ with  $m \in  [2,+ \infty [ $.  \end{remark}
 \begin{definition}  Let $\kappa = \left\{  \begin{array}{ccc}   1 & \textrm{if} & q<p+1 \\  0 & \textrm{if} & q > p+1   \end{array}  \right.$
 \end{definition}
 \begin{proposition}  \label{inter}
 When the curve  $C_{pq}$ first appears at level $n = pq/2$, if $q=1$, it intersects no other vanishing curves, else, its first intersection moving forward $c = 3/2$ is with
  $C_{q-2+\kappa, p+ \kappa}$, at $m = p+q-2+\kappa$.
  \end{proposition}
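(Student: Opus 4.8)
The plan is to reduce the intersection condition to a pair of linear equations in $m$ and then to maximise $m$. Writing $N_{pq}(m):=(m+2)p-mq$, the defining formula reads $h_{pq}^{m}=\frac{N_{pq}(m)^{2}-4}{8m(m+2)}$ (with $c=c_{m}$), so two curves $C_{pq}$ and $C_{p'q'}$ meet at the parameter $m$ exactly when $N_{pq}(m)^{2}=N_{p'q'}(m)^{2}$, i.e. $N_{pq}(m)=\pm N_{p'q'}(m)$. Each sign is linear in $m$ and is solved by
\[ m_{+}=\frac{2(p-p')}{(q-p)-(q'-p')},\qquad m_{-}=\frac{2(p+p')}{(q-p)+(q'-p')}. \]
Throughout I consider only the curves \emph{present} at level $pq/2$, namely $p'q'\le pq$ with $p'\equiv q'[2]$ and $p',q'\ge 1$, and I recall that moving forward from $c=3/2$ means decreasing $m$ from $+\infty$, so ``first'' means largest admissible $m$.

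First I dispose of the plus branch. If $m_{+}\ge 2$ is a genuine positive intersection, then examining the sign patterns (mixed signs give $m_{+}\le 0$, and $q-p=q'-p'$ gives only the common endpoint on $c=3/2$) forces $p'<p$ and $q'-p'<q-p$, so the denominator is an even integer $\ge 2$ and $m_{+}\le p-p'\le p-1$. Since $p-1<p+q-2+\kappa$ whenever $q+\kappa>1$, the plus branch never produces the first intersection.

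Hence the first intersection lies on the minus branch. Here I set $w:=(q-p)+(q'-p')$, an even integer which must be $\ge 2$ for $m_{-}>0$, and $v:=p+p'$, so that $m_{-}=2v/w$; the admissibility $p'q'\le pq$ becomes $(v-p)(v+w-q)\le pq$, a quadratic in $v$ bounding $v\le v_{\max}(w)=\tfrac12\big[(p+q-w)+\sqrt{(p+q-w)^{2}+4pw}\big]$. Setting $g(w):=2v_{\max}(w)/w$, I will show $g$ is strictly decreasing for $w\ge 2$, so that $m_{-}$ is maximised at $w=2$. At $w=2$ one has $m_{-}=v$, and the elementary estimate $p+q-2+\kappa\le v_{\max}(2)<p+q-1+\kappa$ (checked by squaring, using $p\ge q$ for $\kappa=1$ and $q\ge p+2$ for $\kappa=0$) shows the largest admissible integer is $v=p+q-2+\kappa$. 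The corresponding curve satisfies $q'-p'=2-(q-p)$ and $p+p'=p+q-2+\kappa$, i.e. $(p',q')=(q-2+\kappa,p+\kappa)$, which is exactly the symmetric partner $(m-p,m+2-q)$ of Lemma~\ref{lem} at $m=p+q-2+\kappa$; by the symmetry $N_{pq}(m)=-N_{m-p,m+2-q}(m)$ of the defining formula the two curves indeed meet there, and a direct check gives $|N_{pq}(m)|\ge 2$ (so $h\ge 0$) and $p'q'<pq$, placing the point in the relevant region on a present curve.

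Finally, when $q=1$ the candidate degenerates to $p'=q-1=0$, and no present curve meets $C_{p1}$: on the minus branch $m_{-}>0$ forces $q'-p'>p-1$, hence $q'\ge p+1$ and $p'q'\ge p+1>p=pq$, impossible; on the plus branch the denominator $(1-p)-(q'-p')$ is $\le 0$ while the numerator $p-p'\ge 0$, so $m_{+}\le 0$. This gives the first clause. I expect the one real obstacle to be the monotonicity of $g$, i.e. showing that the coarser partners $w\ge 4$ intersect $C_{pq}$ strictly below $m=p+q-2+\kappa$; this is the inequality $\sqrt{(p+q-w)^{2}+4pw}<(p+q-1+\kappa)w-(p+q)$ for even $w\ge 4$, which after squaring becomes a quadratic inequality in $w$ that I would verify on $[4,\infty)$ from its value and slope at $w=4$. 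The remaining steps, the two floor estimates at $w=2$ and the sign bookkeeping on the plus branch, are routine.
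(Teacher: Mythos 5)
Your argument is essentially the paper's: both reduce the intersection condition to the two linear equations $N_{pq}(m)=\pm N_{p'q'}(m)$, discard one sign branch, and maximize $m$ over admissible partners $p'q'\le pq$ on the other, with your parameters $(w,v)$ matching the paper's $(s,k)$ via $s=w/2$, $v=p+q-s+k$, and with the same $\kappa$-dichotomy arising from whether $(q-1)(p+1)\le pq$. The two points you leave open do close as claimed --- after squaring and dividing by $w$, your $w\ge 4$ inequality becomes the linear statement $(K^{2}-1)w>2KA-2A+4p$ with $K=p+q-1+\kappa$, $A=p+q$, which holds at $w=4$ in both $\kappa$ cases and hence for all larger $w$ --- though on the plus branch your parenthetical omits the case $p'>p$ with $q'-p'>q-p$, which also gives $m_{+}>0$ and must instead be excluded by admissibility (it forces $q'>q$ and hence $p'q'>pq$).
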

 \begin{proof}  Let  $(p',q')  \neq  (p,q)$ with $p'q' \le pq$, then the intersection points \\ $C_{pq} \cap C_{p'q'}$ are given by $[(m+2)p  - mq]^{2} = [(m+2)p'  - mq']^{2}$, with two solutions $m_{+}$ and $m_{-}$ such that $[ (p-q)  \pm  (p'-q') ] m_{\pm}  = 2(\mp p' - p)$. \\ Now, if $[ (p-q)  \pm  (p'-q') ]  = 0$ then $0 = -(p+p') \le -2 $ or $(p,q)=(p',q')$, contradiction; hence, $m_{\pm} = 2\frac{\mp p' - p}{(p-q)  \pm  (p'-q')} $ and $ \frac{1}{m_{\pm}}  = \1/2 (\frac{q \pm q' }{p \pm p'} - 1)$. \\
 If $q = 1$, we see that $\frac{q \pm q' }{p \pm p'} > 0   \Rightarrow   p'q' > pq$, contradiction.    \\   
Else, $q \neq 1$; let $(p-q) \pm (p'-q') = -2s $   with $s \in \ZZZ^{\star}$.\\ The goal is to find the biggest $m_{\pm} \in [2 , + \infty [$ among the following solutions, parametered by $s \in \ZZZ^{\star}$,  $k \in  \ZZZ $, with $p'q' \le pq$: 
\begin{itemize}
\item   $(p'_{+}, q'_{+}) = (q-s+k , p+s+k)$   and  $m_{+} = \frac{p+q+k-s}{s} $
\item   $(p'_{-}, q'_{-}) = (p+s+k , q-s+k)$   and  $m_{-} = - \frac{k-s}{s} $ 
\end{itemize} 
But, at fixed $s$ and $k$,  $m_{+} - m_{-} = \frac{p+q+2k}{s}$, and $p+q+2k = p'_{+} + p'_{-} > 0 $, so, if $s > 0$, we choose $m_{+}$, and if $s<0$, we choose $m_{-}$. \\

Let $s>0$,  $k \in \ZZZ$ and  $(p' , q') = (q-s+k, p+s+k) $. $p'q' \le pq  \Rightarrow   k < s$.\\
  The biggest m is given by $s=1$ and  $k=0$. Now, $(q-1)(p+1) > pq$ if $q > p+1$, so we take $k=-1$ in this case and so $(p', q') = (q-2+\kappa, p+\kappa)$, at $m = p+q-2+\kappa$.
 
Let $s<0$, $k \in \ZZZ$ and  $(p' , q') = (p+s+k, q-s+k) $.  $p'q' \le pq  \Rightarrow   k < -s$. \\
Now if  $- \frac{k-s}{s}  = m > p+q-2$, then $k >  -s(p+q-1) \ge -s $,  contradiction.  \end{proof}
\begin{definition}
For $q =1$, let $C'_{p1}$  be all of $C_{p1}$ for $m \ge 2 $, ie, $0 \le c \le \3/2 $, else, define $C'_{pq}$ to be the part of $C_{pq}$ for which  $m  >  p+q-2+\kappa$.
\end{definition}
$C'_{pq}$ is the open subset of  $C_{pq}$ between $c=\3/2$ and its first intersection at level $pq/2$. The first step of the proof of theorem \ref{FQS} is to eliminate all on $0 \le c \le \3/2 $, except the curves $C'_{pq}$.

\begin{definition}  Let $n \in \1/2\NNN$:  
\begin{displaymath} S_{n} = \bigcup_{\stackrel{0  < pq/2 \le n}{p \le q , \  p \equiv q \lbrack 2 \rbrack} } \{  (c,h) \  \vert \  0 \le c < \3/2 \ , \  h_{pq}^{c} \le h \le h_{qp}^{c} \textrm{ or }  h \le h_{pp}^{c}  \}
  \end{displaymath}
\end{definition}
\begin{lemma} \label{plane} $lim_{n \to \infty} S_{n}$ is all $0 \le c < \3/2 $ of the plane.
\end{lemma}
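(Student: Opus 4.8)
The plan is to exploit the ``diagonal'' indices $p=q$ in the definition of $S_n$, which by themselves already fill the whole strip; the non-diagonal intervals $[h_{pq}^c,h_{qp}^c]$ turn out to be unnecessary for this particular covering statement. Since the terms are added as $n$ grows we have $S_n\subseteq S_{n'}$ for $n\le n'$, so $\lim_{n\to\infty}S_n=\bigcup_n S_n$, and it suffices to show that every point $(c,h_0)$ with $0\le c<\frac{3}{2}$ and $h_0\in\RRR$ lies in $S_n$ for $n$ large enough.

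First I would fix such a point and invoke the preceding lemma, so that $m\mapsto c_m$ is a bijection $[2,+\infty[\ \to\ [0,\frac{3}{2}[$ and I may write $c=c_m$ with $m\in[2,+\infty[$ \emph{finite}. For each integer $p\ge 1$ the pair $(p,p)$ meets the index constraints $p\le p$, $p\equiv p\ [2]$ and $0<p^2/2\le n$ as soon as $n\ge p^2/2$, and its contribution to $S_n$ contains the downward ray $\{(c,h)\mid 0\le c<\frac{3}{2},\ h\le h_{pp}^c\}$. The key computation is then
\begin{displaymath}
h_{pp}^{c_m}=\frac{[(m+2)p-mp]^2-4}{8m(m+2)}=\frac{p^2-1}{2m(m+2)}\longrightarrow+\infty\quad(p\to\infty),
\end{displaymath}
the divergence holding precisely because $m$ is fixed and finite. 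Choosing $p$ with $h_{pp}^{c}\ge h_0$ places $(c,h_0)$ in the $(p,p)$-region, hence in $S_n$ for all $n\ge p^2/2$. As $(c,h_0)$ was arbitrary, $\lim_n S_n$ is the entire strip $0\le c<\frac{3}{2}$.

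There is essentially no serious obstacle here; the two things to be careful about are bookkeeping rather than mathematics. One must read the definition correctly, so that each index (in particular each diagonal one) genuinely contributes the ray $\{h\le h_{pp}^c\}$, and one must note that the strict inequality $c<\frac{3}{2}$, i.e. $m$ finite, is exactly what forces $h_{pp}^{c}\to+\infty$ (at $c=\frac{3}{2}$ one gets $h_{pp}^{c}\equiv 0$, consistent with the no-ghost regime of Proposition \ref{propos}). I would present the diagonal argument as the proof and, if desired, relegate the interval picture to a remark: for fixed $c=c_m$ the off-diagonal family $(p,p+2)$ yields, after the monotone substitution $u:=(m+2)p-mq$, the $|u|$-intervals $[\,2|p-m|,\ 2(p+m+2)\,]$, whose consecutive members overlap and sweep $[2,+\infty[$, thereby covering all $h\ge 0$ on their own — an alternative but more laborious route to the same conclusion.
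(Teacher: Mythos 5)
Your proof is correct, and it takes a genuinely different route from the paper's. The paper's own argument is only a two-line hint that works near the boundary $c=3/2$: it records that $c_{p+q-2}\to 3/2$ as $pq\to\infty$ and that $h_{pq}^{c}\to (p-q)^{2}/8$ as $c\to 3/2$, i.e.\ it exhausts the strip by letting the off-diagonal bands $h_{pq}^{c}\le h\le h_{qp}^{c}$ (equivalently the tiles $R_{pq}\cup C'_{pq}$ of Lemma \ref{Rw}) accumulate against $c=3/2$ with ever larger limiting heights $(p-q)^{2}/8$. You instead discard the bands entirely and use only the diagonal rays $\{h\le h_{pp}^{c}\}$, together with the elementary computation $h_{pp}^{c_{m}}=(p^{2}-1)/(2m(m+2))\to+\infty$ as $p\to\infty$ for fixed finite $m$, which immediately places any given $(c,h_{0})$ with $0\le c<3/2$ inside $S_{n}$ for all $n\ge p^{2}/2$; combined with the monotonicity $S_{n}\subseteq S_{n'}$ this settles the lemma pointwise. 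What your version buys is a complete, self-contained argument with no bookkeeping about where the off-diagonal curves sit; what it costs is nothing for this particular lemma, although the band picture the paper gestures at is the one actually exploited afterwards (Lemmas \ref{Rw} and \ref{elim}), so the paper's terser phrasing is foreshadowing the elimination step rather than giving a cleaner proof of the covering. Your observation that the divergence of $h_{pp}^{c}$ fails precisely at $c=3/2$ is the right sanity check and is consistent with Proposition \ref{propos}.
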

\begin{proof} $lim_{pq/2 \to \infty}(c_{p+q-2}) = 3/2$ Ê\ and \ $lim_{c \to 3/2} (h_{pq}^{c}) =  h_{pq}^{3/2} = \frac{(p-q)^{2}}{8 }$. 
\end{proof}
\begin{definition} Let $p'q'> pq $; $C_{p'q'}$ is a first intersector of $C'_{pq}$, if at level $p'q'/2$, it's the first  starting from $c=3/2$.
\end{definition}
\begin{proposition} \label{first} The first intersectors on $C'_{pq}$ are  $C_{q-1+k , p+1+k}$, $k \ge \kappa $, \\ at $m = p+q+k-1$.
\end{proposition}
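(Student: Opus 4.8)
The plan is to mirror the method used for Proposition~\ref{inter}: I keep the explicit description of the intersection points of the vanishing curves, and then single out, level by level, the crossings of $C'_{pq}$ that lie closest to $c=\frac{3}{2}$. Recall that $C'_{pq}$ is swept out by the points $(c_m,h_{pq}^m)$ as $m$ runs over the open interval $\bigl(p+q-2+\kappa,\,+\infty\bigr)$ (and, when $q=1$, over $m\ge 2$). A first intersector, being the first crossing encountered at its level as one comes in from $c=\frac{3}{2}$, must be a \emph{record}: its crossing point is closer to $c=\frac{3}{2}$ than any crossing coming from a strictly lower level. The strategy is therefore to reduce the statement to the assertion that, for each admissible integer $M$, the vanishing curve of smallest level other than $C_{pq}$ passing through $(c_M,h_{pq}^M)$ is $C_{M-p,\,M+2-q}$; setting $M=p+q+k-1$ then rewrites this as $C_{q-1+k,\,p+1+k}$, met at $m=p+q+k-1$, with $k\ge\kappa$.

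The core computation I would carry out is the determination of all curves through a fixed point $(c_M,h_{pq}^M)$. The condition $h_{ab}^{c_M}=h_{pq}^{c_M}$ is equivalent to $[(M+2)a-Mb]^2=[(M+2)p-Mq]^2$, i.e. $(M+2)a-Mb=\pm[(M+2)p-Mq]$. Each sign gives a line of integer solutions, and the congruence $a\equiv b\ [2]$ forces the step to be $(M,M+2)$; thus the solutions are $(a,b)=(p+uM,\,q+u(M+2))$ and $(a,b)=(uM-p,\,u(M+2)-q)$ for $u\in\ZZZ$. Discarding $C_{pq}$ itself (the value $u=0$ of the first family) and the solutions with a non-positive entry, the smallest product is attained by the second family at $u=1$, namely $C_{M-p,\,M+2-q}$, since $(M-p)(M+2-q)<(M+p)(M+2+q)$ and larger $|u|$ only increases the product. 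I would then verify $(M-p)(M+2-q)>pq$ exactly when $M>p+q-2+\kappa$, by writing $(M-p)(M+2-q)-pq=M^2-(p+q-2)M-2p$ and checking the sign of this quadratic at the endpoint $M=p+q-2+\kappa$ according to whether $\kappa=0$ or $\kappa=1$; the same computation gives $M-p\ge 1$ and $M+2-q\ge 1$. This is precisely the constraint $k\ge\kappa$.

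Reassembling, the curves $C_{q-1+k,\,p+1+k}$ so obtained meet $C'_{pq}$ at $m=p+q+k-1$, a value strictly increasing in $k$, while their levels $(q-1+k)(p+1+k)/2$ also increase with $k$. Hence they form an increasing sequence of crossing points advancing toward $c=\frac{3}{2}$, each arising at a strictly higher level than its predecessor, so each is a new record and a genuine candidate for a first intersector; this gives the stated list, with the $q=1$ case absorbed since there $\kappa=1$ and $m=p+k\ge 2$ for $k\ge 1$.

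The hard part will be the completeness direction, i.e. showing these are the \emph{only} first intersectors. One must rule out the remaining crossings of $C'_{pq}$ by curves of product $>pq$: the two families above with $u\ge 2$, the first family at $u=1$, and — the genuinely delicate case — crossings at non-integer $m$, such as $C_{4,8}$ meeting $C_{11}$ at $m=\frac{5}{2}$, which is the first among level-$16$ curves yet is not a record. The point to establish is that every such crossing lies further from $c=\frac{3}{2}$ than the record $C_{q-1+k,\,p+1+k}$ of no greater level, so that it is superseded and is never the first crossing at its level. I would prove this by applying the same two-line description to an arbitrary crossing point and combining the monotonicity of $M\mapsto(M-p)(M+2-q)$ with the fact that $m_+=\frac{p+q+k-s}{s}$ from the proof of Proposition~\ref{inter} is strictly decreasing in the family parameter $s$, which pushes the extremal (closest to $c=\frac{3}{2}$) crossings into the family $s=1$ already treated, whence the result follows.
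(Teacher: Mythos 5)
Your setup and your computation of the candidates are correct, and at bottom you are running the same argument as the paper: Proposition \ref{inter} already parametrizes all intersections of $C_{pq}$ with other vanishing curves by $(s,k)$, your two families $(p+uM,q+u(M+2))$ and $(uM-p,u(M+2)-q)$ through a fixed integer point are exactly the slice of that parametrization at fixed crossing point, and your identification of $C_{M-p,M+2-q}$ (the case $s=1$) as the minimal-product curve through $(c_M,h_{pq}^M)$, together with the threshold $M^2-(p+q-2)M-2p>0$ giving $k\ge\kappa$, reproduces the paper's first step. The genuine gap is that the decisive comparison is never carried out: you state that you ``would prove'' that every crossing by a curve outside your list is superseded by a listed crossing of no greater level lying closer to $c=3/2$. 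That comparison is not a loose end --- it is the entire content of the paper's proof (the implication: if $m'\ge m$ and $uv\le p'q'$ then $s'=1$ and $k'=k$), and without it \emph{neither} direction of the proposition is established. In particular your ``record'' argument for the positive direction only compares the listed crossings with one another; a priori a curve from the $|u|\ge 2$ families, or one crossing at a non-integer $m'$ (your own example $C_{4,8}$ meeting $C_{11}$ at $m=5/2$), could cross closer to $c=3/2$ with product at most $(q-1+k)(p+1+k)$, which would destroy the claim that $C_{q-1+k,p+1+k}$ is first at its level. A second, smaller imprecision: a first intersector must beat all crossings of level \emph{at most} its own, not strictly lower level, since distinct curves can share a product.

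The gap is fillable with the ingredients you name, and you should write it out. Any curve through a crossing point at non-integer $m'$ has the form $(sm'-p,\,s(m'+2)-q)$ or $(sm'+p,\,s(m'+2)+q)$ with $s\ge 2$ an integer multiple of the denominator of $m'$; since $sm'\ge 2m'\ge \lceil m'\rceil$ and $s(m'+2)\ge \lceil m'\rceil+2$ for $m'\ge 1$, its product dominates that of $C_{\lceil m'\rceil-p,\,\lceil m'\rceil+2-q}$, which crosses $C'_{pq}$ at the integer $\lceil m'\rceil>m'$, strictly closer to $c=3/2$, and whose product exceeds $pq$ because $\lceil m'\rceil\ge p+q-1+\kappa$. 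The same one-line comparison disposes of the integer-$M$ curves with $|u|\ge 2$ and of $(M+p,M+2+q)$, each being superseded by $C_{M+1-p,M+3-q}$ at $M+1$. With that paragraph added, your proof is complete and is essentially the paper's, reorganized around fixing the crossing point rather than fixing the intersecting curve.
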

\begin{proof}
We take the same structure that proof of proposition \ref{inter}. \\ $(p',q') = (q-1+k,p+1+k)$ corresponds to  $s=1$ and $k \ge \kappa  \Leftrightarrow  p'q'  >  pq$.  \\ 
Now, let $(u,v) = (q-s'+k' , p+s'+k')$ or $(p+s'+k',q-s'+k' )$, \\  if  $m'= \frac{p+q+k'-s'}{s'}$ or $- \frac{k'-s'}{s'} \ge m $  and $uv \le p'q'$, then, $k'=k$ and $s'=1$.  \\  
So, $C_{q-1+k , p+1+k}$ first intersects $C'_{pq}$.  Now, if  $m' > m-1$  and $s' \ne 1$, then, $uv > p'q'$; so, there is no other first intersector.   \end{proof}
\begin{lemma}  \label{discrete} The discrete series of theorem \ref{FQS} consists exactly of these first intersections $F_{pqk}$, on all the $C'_{pq}$.
\end{lemma}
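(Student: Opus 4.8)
The plan is to show that the parametrized family of first intersections $F_{pqk}$ (where $C'_{pq}$ meets $C_{q-1+k,\,p+1+k}$ at $m=p+q+k-1$, with $k\ge\kappa$, as given by Proposition~\ref{first}) coincides exactly, as a set of points $(c,h)$ in the plane $0\le c<\tfrac32$, with the discrete-series points $(c_m,h_{rs}^m)$ of Theorem~\ref{FQS}. Since every point lives on a curve $c=c_m$ for a unique $m\in[2,+\infty[$ (the curves are indexed by $m$ via the increasing bijection $m\mapsto c_m$), I would first fix $m$ and reduce the statement to a purely combinatorial bijection: the parameters $(p,q,k)$ admissible for a first intersection at level $m=p+q+k-1$ should biject with the pairs $(r,s)$ satisfying $1\le r\le m-1$, $1\le s\le m+1$, $r\equiv s\,[2]$, in such a way that the $h$-values agree.

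First I would read off, from Proposition~\ref{first}, that at the first intersection $F_{pqk}$ we have $m=p+q+k-1$ and the intersecting curve is $C_{q-1+k,\,p+1+k}$. The natural candidate for the discrete-series label is to set the intersection point equal to $(c_m,h_{rs}^m)$ where $(r,s)$ is read from one of the two crossing curves. Because the two curves $C'_{pq}$ and $C_{q-1+k,p+1+k}$ meet, their $h$-values coincide there, so $h_{pq}^m=h_{q-1+k,\,p+1+k}^m$ at this $m$; hence I may take $(r,s)=(p,q)$ and only need to verify the range constraints. The key algebraic identity to check is that $1\le p\le m-1$ and $1\le q\le m+1$ follow from $m=p+q+k-1$ together with $k\ge\kappa$ and $p,q\ge1$: the lower bounds are immediate, while $p\le m-1$ reads $p\le p+q+k-2$, i.e.\ $q+k\ge2$, and $q\le m+1$ reads $q\le p+q+k$, i.e.\ $p+k\ge0$, both of which hold once one checks the boundary cases $q=1$ (forcing $\kappa=1$) and small $k$. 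The parity condition $p\equiv q\,[2]$ is exactly the defining congruence $p\equiv q\,[2]$ of the curves, so it transfers directly.

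Conversely, I would show every admissible discrete pair $(r,s)$ with $1\le r\le m-1$, $1\le s\le m+1$, $r\equiv s\,[2]$ arises as some $F_{pqk}$: given $(r,s)$ and $m$, set $(p,q)=(r,s)$ and solve $m=p+q+k-1$ for $k=m-p-q+1$, then check $k\ge\kappa$ using the range bounds on $r,s$ (this is the reverse of the inequalities above, so the same boundary analysis applies), and confirm that $C'_{pq}$ is genuinely the $C'$-piece at this $m$, i.e.\ $m>p+q-2+\kappa$, which becomes $k>\kappa-1$, automatic from $k\ge\kappa$. Matching the $h$-coordinate is then free, since $h_{rs}^m=h_{pq}^m$ by construction.

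The main obstacle I expect is the bookkeeping around the two boundary regimes $q=1$ versus $q\ne1$ encoded by $\kappa$, since these shift the intersecting curve and the value of $m$, and it is precisely here that the flawed argument cited in the introduction (\cite{9}, \cite{10}) mishandled the distinction. I would therefore treat $q=1$ as a separate case: by Proposition~\ref{inter} the curve $C_{p1}$ meets no other vanishing curve as it first appears, so its entire piece $C'_{p1}$ for $m\ge2$ is available, and I must check that the first intersectors $C_{k,\,p+1+k}$ (here $q-1+k=k$) still land on discrete points with $s=1$ giving $r=p$ in range. Once the $\kappa$-bookkeeping is organized cleanly and the two inequalities $q+k\ge2$ and $p+k\ge0$ are verified in each regime, the bijection and hence Lemma~\ref{discrete} follow; the $h$-values never need to be recomputed because coincidence at an intersection point is built into the definition of $F_{pqk}$.
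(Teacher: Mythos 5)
Your forward inclusion (every $F_{pqk}$ lands in the discrete-series range) is fine and matches the paper's bookkeeping: with $m=p+q+k-1$ and $k\ge\kappa$ one gets $1\le p\le m-1$, $1\le q\le m+1$, and the set of attainable $m$ is exactly $\NNN_{\ge 2}$. The gap is in your converse. You set $(p,q)=(r,s)$ and claim $k=m-r-s+1\ge\kappa$ follows from the range bounds $1\le r\le m-1$, $1\le s\le m+1$ ``by reversing the inequalities above''. It does not: the inequalities you reverse only give $k\ge 2-q$ and $k\ge -p$, which are far weaker than $k\ge\kappa\ge 0$. The condition $k\ge\kappa$ is equivalent to $r+s\le m+1-\kappa$, a genuinely restrictive constraint, and roughly half of the admissible discrete pairs violate it. Concretely, take $m=4$ and $(r,s)=(3,5)$: this is in the discrete range, but $k=4-3-5+1=-3<0=\kappa$, so $(c_4,h_{35}^4)$ is \emph{not} of the form $F_{3,5,k}$ for any admissible $k$.

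The missing idea --- and the single substantive step in the paper's own proof --- is the symmetry $h_{pq}^{m}=h_{m-p,\,m+2-q}^{m}$. Each first intersection $F_{pqk}$ is the meeting of $C'_{pq}$ with $C_{m-p,\,m+2-q}$ (since $q-1+k=m-p$ and $p+1+k=m+2-q$), so every discrete-series point carries two labels, and exactly one of $(r,s)$, $(m-r,m+2-s)$ satisfies the constraint $p+q\le m+1-\kappa$. In the example above, $h_{35}^{4}=h_{11}^{4}$ and $(1,1)$ gives $k=3\ge\kappa=1$, i.e.\ the point is $F_{1,1,3}$. To repair your argument you must, for each $(r,s)$ with $r+s>m+1-\kappa_{rs}$, pass to $(p,q)=(m-r,m+2-s)$ and verify $p+q=2m+2-r-s\le m+1-\kappa_{pq}$ (a short check using $r\equiv s\,[2]$ and the range bounds). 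Without this reflection step the lemma's ``exactly'' fails in the direction that every discrete-series point is a first intersection, which is precisely the direction needed later to conclude that the intervals $I_{pqk}$ between the $F_{pqk}$ exhaust the non-discrete part of $C'_{pq}$.
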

\begin{proof} $m = p+q+k-1$ with $k \ge \kappa$, so, the set of such $m$ is $\NNN_{\ge 2}$. \\
Now, let $m \ge 2$ fixed, then, $p+q \le   m+1-\kappa $  \\
But, $h_{pq}^{m} = h_{m-p,m+2-q}^{m}$, so we obtain the discrete series:  \\
  Integers  $m \ge 2$, \  $1 \le p \le m-1$, \   $1 \le q \le  m+1 $  and  $p \equiv q [2]$.
\end{proof}
\begin{remark} \label{redondancy} We can write the series without redondancy as:  \\ $m \ge 2$, \  $1 \le p < q-1 \le m$ and $p \equiv q [2]$.  \end{remark} 
\begin{definition}
 Let  $R_{11} =  \{ 0 \le c  < 3/2 , h < 0  \}$;  \\  for $p \ne 1$, let $R_{1p} = R_{p1} $ be the open region bounded by $C'_{p1}$, $C'_{1p}$ and $C'_{p-2,1}$; \\  for $q \ne 1$, $R_{pq}$, the open region bounded by $C'_{pq}$, $C'_{p-1,q-1}$ and $C'_{q-2+\kappa,p+\kappa}$.
\end{definition}
\begin{lemma}  \label{sari}
No vanishing curves at level $n = pq/2$ intersect $R_{pq}$.
\end{lemma}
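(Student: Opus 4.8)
The plan is to prove Lemma \ref{sari} by combining the explicit description of the regions $R_{pq}$ with the intersection data obtained in Propositions \ref{inter} and \ref{first}. The region $R_{pq}$ is, by definition, the open region bounded by the three curves $C'_{pq}$, $C'_{p-1,q-1}$ and $C'_{q-2+\kappa,p+\kappa}$ (respectively the analogous triple in the $q=1$ case). Each of these three bounding curves first appears at a level strictly less than $pq/2$: indeed $C'_{p-1,q-1}$ appears at level $(p-1)(q-1)/2 < pq/2$, and by Proposition \ref{inter} the curve $C_{q-2+\kappa,p+\kappa}$ is the \emph{first intersector} of $C'_{pq}$, arising at a level $(q-2+\kappa)(p+\kappa)/2$ which is likewise $\le pq/2$. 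So the assertion to check is that no \emph{new} vanishing curve, i.e.\ one whose level $n$ equals exactly $pq/2$, can pass through the interior of the triangle-like region $R_{pq}$.

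First I would fix the level $n = pq/2$ and list the candidate curves $C_{p'q'}$ with $p'q' = pq$ (and $0 \ne p' \equiv q' [2]$) other than $C_{pq}$ itself. The key observation is geometric: a curve first appearing at level $pq/2$ can enter $R_{pq}$ only by crossing one of its three boundary arcs, but each such crossing would be an intersection point $C_{p'q'} \cap C'_{\bullet}$ at level $pq/2$ occurring \emph{before} (i.e.\ at larger $m$, closer to $c=3/2$ than) the first intersection that defines the endpoint of the relevant $C'$. I would invoke the intersection formulas from the proof of Proposition \ref{inter}: the intersections $C_{pq} \cap C_{p'q'}$ occur at the two values $m_{\pm}$ with $\frac{1}{m_{\pm}} = \frac12\bigl(\frac{q \pm q'}{p \pm p'} - 1\bigr)$, and the analogous formulas hold for the other two boundary curves. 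The point is that Proposition \ref{first} already identifies the \emph{first} intersector on each $C'$ at each level; any curve at level exactly $pq/2$ that met $R_{pq}$ would contradict the minimality established there, because its intersection with the relevant boundary arc would occur at a strictly larger $m$ than the endpoint $m = p+q-2+\kappa$ (resp.\ the endpoints of the other two arcs) that bounds $R_{pq}$.

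The main step is therefore a bookkeeping argument: parametrize all curves $C_{p'q'}$ at level $n=pq/2$ by the integer $s$ (with $(p-q)\pm(p'-q') = -2s$) exactly as in the proof of Proposition \ref{inter}, solve for the crossing value of $m$ with each of the three boundary arcs, and verify in each case that the crossing falls outside the open arc $C'$ (either at $m \le p+q-2+\kappa$, hence past the vertex, or at $m \le 2$, hence off the strip $0 \le c < 3/2$). Since $R_{pq}$ is the bounded open region enclosed by the three arcs and a level-$pq/2$ curve is an unbroken arc in the strip, the absence of boundary crossings forces it to lie entirely outside $R_{pq}$. I would treat the cases $q = 1$ and $q \ne 1$ separately, matching the split in the definition of $R_{pq}$ and of $\kappa$.

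The hard part will be ensuring the case analysis is exhaustive and that the inequalities on $m$ come out with the correct (strict versus non-strict) sense at the three vertices of $R_{pq}$, since the region is bounded by arcs whose endpoints are precisely the first-intersection points $F_{pqk}$ of Lemma \ref{discrete}; a sign error there would either admit a spurious curve into the region or wrongly exclude a boundary arc. I expect the $q \ne 1$ case with $\kappa = 0$ (i.e.\ $q > p+1$) to require the most care, because then the third bounding curve $C_{q-2,p}$ is governed by a different branch of the $m_{\pm}$ formula than in the $\kappa = 1$ case, and one must check that the curve $C_{p-1,q-1}$ and the level-$pq/2$ candidates do not sneak across it. Once these inequalities are verified, the lemma follows directly.
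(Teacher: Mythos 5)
Your proposal is correct and takes essentially the same route as the paper: the paper's own proof is just the two-sentence observation that a vanishing curve at level $pq/2$ meeting the open region $R_{pq}$ would have to cross one of its boundary arcs, which is ruled out by the first-intersection data of Propositions \ref{inter} and \ref{first}. Your more explicit bookkeeping plan (parametrizing candidate curves by $s$ and checking the crossing values of $m$ against the arc endpoints) is simply an expansion of that same argument.
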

\begin{proof}
A vanishing curve which did intersect $R_{pq}$, would have to intersect its boundary. This does not happen by proposition \ref{first}.
\end{proof}
\begin{lemma} \label{Rw}
$S_{n} - S_{n-1/2} =  \bigcup_{\stackrel{ pq/2 = n}{ p \equiv q \lbrack 2 \rbrack} }  R_{pq} \cup C'_{pq}$
\end{lemma}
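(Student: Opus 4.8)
The plan is to read off both sides as subsets of the strip $\{0 \le c < 3/2\}$ and to compare them band by band. Since the family $(S_n)$ is increasing in $n$, the difference $S_n \setminus S_{n-1/2}$ is exactly the region freshly covered when the level-$n$ curves are switched on; by the definition of $S_n$ it is the union, over the pairs $(p,q)$ with $p \le q$, $p \equiv q\,[2]$ and $pq/2 = n$, of the part of the closed band $\{h_{pq}^c \le h \le h_{qp}^c\}$ (resp.\ $\{h \le h_{pp}^c\}$ when $p = q$) that does not already lie in $S_{n-1/2}$. Because the right-hand side of the statement ranges over \emph{both} orderings $(p,q)$ and $(q,p)$, the aim is to show that each such band, after deleting $S_{n-1/2}$, decomposes exactly into the cells attached to its two bounding curves: $R_{pq} \cup C'_{pq}$ along the lower curve $C_{pq}$ and $R_{qp} \cup C'_{qp}$ along the upper curve $C_{qp}$ (a single cell $R_{1q} = R_{q1}$ when one index is $1$, and $R_{pp}$ when $p = q$).

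First I would check the inclusion $\supseteq$, i.e.\ that each cell is genuinely new. The curve $C'_{pq}$ lies on the level-$n$ curve $C_{pq}$, hence in $S_n$; and by its very definition it runs from $c = 3/2$ only up to its first self-intersection, so by Proposition \ref{inter} and Proposition \ref{first} no vanishing curve of level $< n$ meets it there, whence $C'_{pq}$ avoids $S_{n-1/2}$. For the open cell $R_{pq}$ I would observe that all three of its bounding curves have level at most $n$: the level-$n$ boundary is $C'_{pq}$ itself, while $C'_{p-1,q-1}$ has level $(p-1)(q-1)/2 < pq/2$ and $C'_{q-2+\kappa,\,p+\kappa}$ has level $(q-2+\kappa)(p+\kappa)/2 < pq/2$, the inequality being immediate in either case $\kappa \in \{0,1\}$. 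Since $R_{pq}$ is an open cell of the arrangement bounded only by such curves, Lemma \ref{sari} (no level-$n$ curve crosses it) together with the identification of its other two sides as the first self-intersector and a first intersector coming from Propositions \ref{inter} and \ref{first} shows that no vanishing curve of level $\le n$ enters its interior; hence $R_{pq} \cap S_{n-1/2} = \emptyset$, while $R_{pq}$ sits inside the level-$n$ band and so lies in $S_n$. The degenerate cases $q = 1$ (where $C_{p1}$ is never cut, by Proposition \ref{inter}), $p = 1$, and $p = q$ are treated by the corresponding special clauses in the definition of $R_{pq}$.

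For the reverse inclusion $\subseteq$, I would take a point $(c,h)$ lying in some level-$n$ band but outside $S_{n-1/2}$, and locate it in the arrangement of all curves of level $\le n$. Not being in $S_{n-1/2}$ forces it into a cell adjacent to one of the two level-$n$ boundary curves $C_{pq}$, $C_{qp}$ of its band; the first-intersector description of Proposition \ref{first} and the first self-intersection of Proposition \ref{inter} pin down the two remaining sides of that cell and identify it with $R_{pq}$, $R_{qp}$, or the appropriate degenerate cell, so that $(c,h) \in R_{pq} \cup C'_{pq}$ (or its counterpart). Combining the two inclusions gives the stated equality. The main obstacle, and where essentially all the work sits, is precisely this last matching: verifying that \emph{deleting} the lower-level region $S_{n-1/2}$ from a level-$n$ band leaves exactly the two prescribed cells, with the correct three bounding curves in either regime $\kappa \in \{0,1\}$ and in the boundary cases where an index equals $1$ or $p = q$. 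Everything else is bookkeeping on the already-established Propositions \ref{inter}, \ref{first} and Lemma \ref{sari}.
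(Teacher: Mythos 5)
Your proposal is correct and follows essentially the same route as the paper, whose own proof is just the telegraphic citation of the three ingredients you elaborate: the inclusion $C_{pq}-C'_{pq}\subset S_{n-1/2}$ (via Propositions \ref{inter} and \ref{first}), Lemma \ref{sari} guaranteeing that $R_{pq}$ is a fresh cell uncut by level-$\le n$ curves, and the band-by-band bookkeeping starting from $S_{1/2}=R_{11}\cup C'_{11}$. Your write-up is in fact considerably more explicit than the paper's one-line proof, particularly on the reverse inclusion.
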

\begin{proof}
$S_{1/2} = R_{11} \cup C'_{11}$, \  $C_{pq} - C'_{pq} \subset S_{n-1/2}$ and  lemma \ref{sari}. 
   
\end{proof}
\begin{lemma} \label{elim}
All $S_{n}$ is eliminated,  except  $C'_{pq}$, $pq/2 \le n$.
\end{lemma}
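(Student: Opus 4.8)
The plan is to prove Lemma~\ref{elim} by induction on the level $n \in \frac{1}{2}\NNN$, using the decomposition of $S_n$ into pieces supplied by Lemma~\ref{Rw}. The base case $n=1/2$ is immediate: by Lemma~\ref{Rw} we have $S_{1/2} = R_{11} \cup C'_{11}$, and the region $R_{11} = \{0 \le c < 3/2,\ h < 0\}$ is eliminated at once since $h \ge 0$ is necessary for the absence of ghosts (Lemma~\ref{h}), so only $C'_{11}$ survives, as claimed. For the inductive step, I would assume that every point of $S_{n-1/2}$ other than the curves $C'_{pq}$ with $pq/2 \le n-1/2$ has been eliminated, and then treat the newly added set $S_n - S_{n-1/2}$, which by Lemma~\ref{Rw} is exactly the disjoint family $\bigcup_{pq/2=n,\ p \equiv q [2]} R_{pq} \cup C'_{pq}$.

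The heart of the argument is therefore to eliminate each open region $R_{pq}$ appearing at level $pq/2 = n$, since the curves $C'_{pq}$ are precisely what we are allowed to keep. For this I would produce, on each such $R_{pq}$, a level at which the Kac determinant is strictly negative, which forces the existence of a ghost and hence eliminates the region. The natural choice is to look at $\det_{pq/2}$, the determinant first detecting the curve $C_{pq}$: on crossing $C'_{pq}$ transversally from the no-ghost side, the factor $(h - h_{pq}^{c})^{d(0)}$ changes sign to order $1$ (note $d(0)=1$), and by Lemma~\ref{sari} no other vanishing curve of that same level $n = pq/2$ meets $R_{pq}$, so no competing factor can restore positivity of the determinant inside $R_{pq}$. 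Thus $\det_{pq/2}$ has constant sign on $R_{pq}$, and I would check that this sign is negative by comparing with the adjacent no-ghost region across $C'_{pq}$, where the determinant is positive. A negative determinant at some level exhibits a ghost, so $R_{pq}$ is eliminated.

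I expect the main obstacle to be the sign bookkeeping: verifying that the determinant is genuinely negative on $R_{pq}$ rather than merely nonzero. This requires knowing, on the boundary component $C'_{pq}$, that we are arriving from a region already shown to be ghost-free (so the determinant is positive just outside $R_{pq}$), and that crossing $C'_{pq}$ flips exactly one sign because the relevant factor vanishes to order exactly $1$ there with $d(pq/2 - pq/2) = d(0) = 1$. The transversality of the intersections, established in the proof of Proposition~\ref{inter} via the explicit slopes $1/m_{\pm} = \frac{1}{2}(\frac{q \pm q'}{p \pm p'} - 1)$, guarantees that distinct vanishing curves cross cleanly so that sign changes can be tracked one curve at a time. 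Once each $R_{pq}$ with $pq/2 = n$ is eliminated this way, the inductive hypothesis disposes of the remainder of $S_{n-1/2}$, and combining gives that all of $S_n$ is eliminated except the curves $C'_{pq}$ with $pq/2 \le n$, completing the induction.
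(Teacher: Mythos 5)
Your overall skeleton matches the paper's: decompose $S_{n}$ via Lemma \ref{Rw} and eliminate each region $R_{pq}$ by exhibiting a negative Kac determinant at level $pq/2$ (the induction wrapper is harmless but not needed). The genuine gap is in the step where you pin down the sign, namely ``comparing with the adjacent no-ghost region across $C'_{pq}$, where the determinant is positive.'' The only no-ghost region available at this stage is $\{h\ge 0,\ c\ge 3/2\}$ (Proposition \ref{propos}), and it does not border $C'_{pq}$ along an arc: it meets the closure of $C'_{pq}$ only at the single endpoint $(3/2,(p-q)^{2}/8)$ on the line $c=3/2$, where all the curves $C_{p'q'}$ with $|p'-q'|=|p-q|$ accumulate. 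What actually lies on the other side of $C'_{pq}$ is another region of the strip $0\le c<3/2$ --- either an $R_{p'q'}$ already eliminated (hence containing ghosts, so not a no-ghost region) or a region treated only at a higher level. You have not established that $\det_{pq/2}>0$ there; doing so honestly requires knowing the sign of every factor $\varphi_{p'q'}$, $p'q'\le pq$, on that region, which is exactly the computation you are trying to bypass. A path back to $c>3/2$ would either have to pass through the pinch point (where the determinant vanishes) or cross further vanishing curves, each flipping the sign by $(-1)^{d(pq/2-p'q'/2)}$, and you do none of that bookkeeping.

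The paper avoids all of this by a direct, pointwise sign computation on $R_{pq}$, and this is the idea missing from your write-up: $R_{pq}$ lies \emph{between} the two branches $C_{pq}$ and $C_{qp}$ (or below $C_{pp}$ when $p=q$), so the factor $\varphi_{pq}=(h-h_{pq}^{c})(h-h_{qp}^{c})$ (resp.\ $h-h_{pp}^{c}$) is negative there and enters $\det_{pq/2}$ with the odd exponent $d(0)=1$; whereas for every $p'q'\le pq$ with $(p',q')\ne(p,q)$, the region $R_{pq}$ lies on the \emph{same} side of both $C_{p'q'}$ and $C_{q'p'}$, so $\varphi_{p'q'}>0$ regardless of the parity of its exponent. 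Hence $\det_{pq/2}<0$ on $R_{pq}$ with no reference to any adjacent region. Your appeals to Lemma \ref{sari} (constancy of the sign on $R_{pq}$) and to $d(0)=1$ are correct and coincide with the paper's; what is missing is precisely the geometric placement of $R_{pq}$ relative to the paired curves $C_{p'q'}$ and $C_{q'p'}$, which is the reason the Kac determinant is grouped into the irreducible factors $\varphi_{p'q'}$ in the first place.
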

\begin{proof}
By previous lemma, $S_{n} = \bigcup_{\stackrel{ pq/2 \le n}{ p \equiv q \lbrack 2 \rbrack} }  R_{pq} \cup C'_{pq}$. \\
Now, we see that, for $p \ne q$, $R_{pq}$  is between $C_{pq}$ and $C_{qp}$; $R_{pp}$ is under $C_{pp}$, and for $p'q' \le pq$ with  $(p',q') \ne (p,q)$, $R_{pq}$ is necessarily over  $C_{p'q'}$ and $C_{q'p'}$, or under them. So (recall section \ref{pre}), $\varphi_{pq}(c,h) < 0$ and  $\varphi_{p'q'}(c,h) > 0$ on $R_{pq}$, and $d(0) = 1$; then, $det_{pq/2}(c,h) < 0$ and $V(c,h)$ admits ghosts on $R_{pq}$.
\end{proof}
Now, given lemma \ref{plane} and \ref{elim}, we have to eliminate the intervals on  $C'_{pq}$,  between the points of the discrete series.
\begin{definition}
Let $I_{pqk}$ be the open subset of $C'_{pq}$ between $F_{p,q,k-1}$ and $F_{p,q,k}$ for $k >  \kappa$; and  $I_{pq\kappa}$,  beyond $F_{pq\kappa}$.
\end{definition}
\begin{lemma} \label{kra}  $C'_{pq} = \bigcup_{k \ge k_{0}} I_{pqk} \cup  F_{pqk}$.
\end{lemma}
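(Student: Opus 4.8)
The plan is to show that the curve $C'_{pq}$ decomposes as a disjoint union of the open intervals $I_{pqk}$ together with the first-intersection points $F_{pqk}$ that separate them, running over the admissible range of $k$. By Proposition \ref{first}, the first intersectors on $C'_{pq}$ are exactly the curves $C_{q-1+k,p+1+k}$ for $k \ge \kappa$, and each such intersection occurs at the point $F_{pqk}$ sitting at $m = p+q+k-1$. Since $m$ increases strictly with $k$ and $c_{m}$ is an increasing bijection from $[2,+\infty[$ onto $[0,3/2[$ (by the lemma preceding Theorem \ref{FQS}), the points $F_{pqk}$ are linearly ordered along $C'_{pq}$ by their $c$-coordinate $c_{p+q+k-1}$, strictly increasing in $k$ towards $c=3/2$.

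First I would fix the lower index $k_0 = \kappa$, which is the smallest value of $k$ for which a first intersector exists on $C'_{pq}$. By the Definition of $I_{pqk}$, for $k > \kappa$ the interval $I_{pqk}$ is the open subarc of $C'_{pq}$ strictly between $F_{p,q,k-1}$ and $F_{p,q,k}$, while $I_{pq\kappa}$ is the initial open subarc lying beyond $F_{pq\kappa}$ (that is, the portion of $C'_{pq}$ from its starting end down to the first intersection point). Next I would argue that these pieces tile $C'_{pq}$: every point of $C'_{pq}$ is either one of the intersection points $F_{pqk}$, or lies strictly between two consecutive such points, or lies in the initial arc before the first intersection. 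Because the $F_{pqk}$ exhaust \emph{all} first intersections on $C'_{pq}$ (Proposition \ref{first}) and are ordered monotonically, there is no room for a point of $C'_{pq}$ to escape this classification.

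Concretely, parametrising $C'_{pq}$ by $c \in [0,3/2[$ (equivalently by $m \in \,]p+q-2+\kappa,+\infty[$ via $c=c_m$), the intersection points sit at the discrete values $c_{p+q+k-1}$, $k \ge \kappa$. The complement of $\{F_{pqk}\}$ in $C'_{pq}$ is then an open set that splits into connected components, and each component is precisely one $I_{pqk}$: the component below $c_{p+q+\kappa-1}$ is $I_{pq\kappa}$, and the component between $c_{p+q+k-2}$ and $c_{p+q+k-1}$ is $I_{pqk}$ for $k>\kappa$. This yields the claimed decomposition
\begin{displaymath}
C'_{pq} = \bigcup_{k \ge k_0} I_{pqk} \cup F_{pqk},
\end{displaymath}
which is just the statement of the lemma.

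The main obstacle, and the only point requiring genuine care, is verifying that the first-intersection points really are ordered monotonically and occur at \emph{distinct} parameter values, so that consecutive points bound genuine nonempty intervals and nothing is double-counted. This reduces to checking that $k \mapsto p+q+k-1$ is strictly increasing (immediate) and that $m \mapsto c_m$ is strictly monotone (the cited lemma), together with confirming via Proposition \ref{first} that no additional, unaccounted intersection points lie on $C'_{pq}$. Once monotonicity and exhaustiveness are in hand, the tiling is a routine topological bookkeeping of a half-open arc punctured at a strictly increasing discrete sequence of points.
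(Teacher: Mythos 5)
Your proposal is correct and takes the same (essentially definitional) route as the paper: the paper states Lemma \ref{kra} with no proof at all, treating it as an immediate consequence of the definition of $I_{pqk}$ together with Proposition \ref{first}, and your write-up is precisely that bookkeeping — ordering the points $F_{pqk}$ via the monotonicity of $m \mapsto c_m$, checking they exhaust the intersections, and tiling the arc — including the correct reading $k_{0}=\kappa$ of the undefined index in the statement.
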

The goal is to eliminate the open subset $I_{pqk}$, $k \ge \kappa$. \\ Recall that when $C_{p'q'} = C_{q-1+k , p+1+k}$ first appears at level $n' = p'q'/2$, there is a  ghost on $R_{p'q'}$; we will show that this ghost continue to exist on $I_{pqk}$.
\begin{proposition}
 At level $n' = p'q'/2$, the first $k-\kappa + 1$ successives intersections on  $C_{p'q'}$ are with $C'_{p+k-j,q+k-j} $  ($\kappa \le j \le k $) at its first intersection $F_{p+k-j,q+k-j,j}$, with $m = p+q+2k-j-1$
\end{proposition}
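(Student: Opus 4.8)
The plan is to rerun the intersection calculus from the proof of Proposition \ref{inter}, now with $C_{p'q'}$ as the base curve, where by Proposition \ref{first} we have $(p',q') = (q-1+k,\, p+1+k)$, so that $p'+q'=p+q+2k$. Parametrising the crossings $C_{p'q'} \cap C_{uv}$ (with $uv \le p'q'$) by $s' \in \ZZZ^{\star}$ and $k' \in \ZZZ$ exactly as there, the admissible intersections split into the $+$-family, with intersector $(u,v)=(q'-s'+k',\,p'+s'+k')$ and $m_+ = \frac{p'+q'+k'-s'}{s'}$, and the $-$-family, with intersector $(u,v)=(p'+s'+k',\,q'-s'+k')$ and $m_- = -\frac{k'-s'}{s'}$, the larger root being selected by the sign of $s'$.

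First I would locate the claimed crossings inside the $+$-family: setting $s'=1$, $k'=-j$ gives $(u,v) = (p+k-j,\, q+k-j)$ and $m = p'+q'-1-j = p+q+2k-j-1$, and conversely matching $(u,v)$ to $(p+k-j,q+k-j)$ forces $s'=1$, $k'=-j$. That each such crossing lies on $C'_{p+k-j,q+k-j}$ and is its first intersection $F_{p+k-j,q+k-j,j}$ is then immediate from Proposition \ref{first}: its first intersector at parameter $l$ is $C_{(q+k-j)-1+l,\,(p+k-j)+1+l}$ at $m=(p+k-j)+(q+k-j)+l-1$, and $l=j$ returns exactly $C_{q+k-1,\,p+k+1}=C_{p'q'}$ at $m=p+q+2k-j-1$, so the two descriptions of the same point agree.

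Next I would read off the admissible range of $j$ from the level condition. A one-line expansion gives $p'q' - (p+k-j)(q+k-j) = (q-p-1) + j(p+q+2k-j)$, which (using that $p \equiv q\,[2]$ forbids $q=p+1$, so $\kappa$ is well defined) is $\ge 0$ exactly when $j \ge \kappa$: for $q>p+1$ already $j=0=\kappa$ is admissible, while for $q<p+1$ the value $j=0$ overshoots the level and $j=1=\kappa$ is the first admissible one; moreover the right-hand side is increasing on $[\kappa,k]$ (its derivative $p+q+2k-2j$ is positive there), so admissibility persists up to $j=k$. Since $m=p+q+2k-j-1$ strictly decreases in $j$, the first crossing (largest $m$, nearest $c=3/2$) is $j=\kappa$ and the $(k-\kappa+1)$-th is $j=k$, at $m=p+q+k-1$, which is the point $F_{p,q,k}$ where $C_{p'q'}$ meets $C'_{pq}$.

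The hard part will be confirming that these $k-\kappa+1$ crossings really are the first successive ones, i.e. that no admissible intersection has $m \ge p+q+k-1$ other than these. I would dispose of the remaining parameters exactly as in the extremal analysis of Proposition \ref{inter}: for $s'\ge 2$ the prefactor $1/s'$ forces $k'$ so large that $uv$ exceeds $p'q'$, violating the level bound; and for $s'<0$ the requirement $m_- \ge p+q+k-1$ again inflates the intersector labels past the level bound, via the same contradiction (an intersection with $m>p'+q'-2$ would need $k'$ beyond what $uv\le p'q'$ permits) used there. Hence every admissible crossing with $m\ge p+q+k-1$ lies in the $s'=1$ band, forcing $\kappa\le j\le k$; sorting by decreasing $m$ then yields precisely the sequence claimed.
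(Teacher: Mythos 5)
Your argument is essentially the paper's own proof: the paper likewise reuses the parametrization $(p'',q'')=(q'-s'+k',\,p'+s'+k')$ from Proposition \ref{inter}, observes that $p''q''\le p'q'$ together with $m\ge p+q+k-1$ forces $s'=1$, and then invokes Proposition \ref{inter} to conclude that the first crossing occurs at $j=\kappa$. You merely fill in more of the arithmetic (the identification $k'=-j$, the level inequality $(q-p-1)+j(p+q+2k-j)\ge 0$, and the cross-check against Proposition \ref{first}) that the paper leaves implicit.
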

\begin{proof}
Let $ (p'', q'') = (q'-s+k' , p'+s+k') $. \\  If  $p''q'' \le p'q' $  and,  $\frac{p'+q'+k'-s'}{s'} $ or $-\frac{k'+s' }{s' }  \ge m= p+q+k-1$, (ie, with $j = k$), \\  then $s'=1$;
  now, by proposition \ref{inter},  the first is with $j=\kappa$.
\end{proof} 

\begin{lemma}
Let $M_{t}$ be an $d$-dimensional polynomial matrix with $det(M_{t})$ vanishing to first order at $t=0$; then, the null space is $1$-dimensional.
\end{lemma}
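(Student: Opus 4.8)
The plan is to obtain the result directly from the divisibility lemma established just above, which says that if $A(t)$ is a polynomial family of $d\times d$ matrices and $\dim\ker A(t_0)=k$, then $(t-t_0)^k$ divides $\det A(t)$. I would apply this with $t_0=0$ and $A(t)=M_t$, and then sandwich $k:=\dim\ker M_0$ between a lower and an upper bound.

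First I would record the lower bound. Since $\det M_t$ vanishes at $t=0$, the matrix $M_0$ is singular, hence has nonzero kernel, so $k\ge 1$. Next I would extract the upper bound from the hypothesis: the assumption that $\det M_t$ vanishes to first order at $t=0$ means exactly that $t$ divides $\det M_t$ while $t^2$ does not. Feeding $\dim\ker M_0 = k$ into the divisibility lemma gives $t^{k}\mid \det M_t$; comparing with $t^2 \nmid \det M_t$ forces $k\le 1$.

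Combining the two inequalities yields $k=1$, i.e. $\ker M_0$ is one-dimensional, which is the claim. I do not expect a genuine obstacle here, since the substantive content is already packaged in the preceding divisibility lemma. The only points demanding care are the correct reading of ``vanishing to first order'' as ``order of vanishing equal to one'' (so that $t^2$ fails to divide the determinant, giving the bound $k\le 1$), together with the elementary observation that a vanishing determinant forces a nonzero kernel (giving $k\ge 1$).
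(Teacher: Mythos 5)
Your proof is correct, but it follows a different route from the paper's. The paper argues via spectral data: it writes $\det(M_{t})=\prod\alpha_{i}(t)$ for eigenvalues $\alpha_{i}(t)$ assumed analytic in $t$, observes that first-order vanishing of the product forces exactly one $\alpha_{i}(0)$ to be zero, and concludes $\dim\ker M_{0}=1$. You instead reuse the divisibility lemma proved immediately beforehand: $\det M_{0}=0$ gives $k=\dim\ker M_{0}\ge 1$, while $t^{k}\mid\det M_{t}$ together with $t^{2}\nmid\det M_{t}$ gives $k\le 1$. Your version is arguably the cleaner of the two: it is purely algebraic, it makes explicit use of the lemma the paper has just established (which otherwise sits somewhat orphaned at this point in the text), and it sidesteps the delicate point that eigenvalues of a general polynomial matrix family need not admit analytic (as opposed to Puiseux) expansions --- a subtlety the paper's argument quietly absorbs because the matrices $M_{n}(c,h)$ at hand are Hermitian, so that Rellich's theorem applies to the one-parameter family. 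The only thing the paper's spectral phrasing buys is a picture of which eigenvalue crosses zero, which is in the spirit of how the ghost-tracking argument is run later; but for the statement as given, your sandwich $1\le k\le 1$ is a complete and preferable proof.
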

\begin{proof}
Let $\alpha_{1}(t), ... , \alpha_{d}(t) $ be the eigenvalues of $M_{t}$; they are analytic in $t$. Now, $det(M_{t}) = \prod \alpha_{i}(t) =  \prod ( \alpha_{i}^{0} + \alpha_{i}^{1}t + ...)$, vanishing to first order at $t=0$, so, there exists a unique $i$ such that $\alpha_{i}^{0} = 0$, and $dim ker M_{0} = 1$.
\end{proof}  
\begin{corollary}
Let $(c,h) \in C_{pq}$,  not on an intersection at level $ pq/2$, then, the null space of $V_{pq/2}(c,h)$ is $1$-dimensional.
\end{corollary}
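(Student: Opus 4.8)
The plan is to deduce this directly from the preceding lemma, which says that a polynomial matrix whose determinant vanishes to first order at a point has a one-dimensional kernel there. Concretely, I would fix a value of $c$ for which $(c,h_{pq}^{c})$ lies on $C_{pq}$ but on no intersection at level $pq/2$, regard $M_{pq/2}(c,h)$ as a polynomial matrix in the single variable $h$, and take $t = h - h_{pq}^{c}$ as the deformation parameter, so that $M_{t} := M_{pq/2}(c,\, h_{pq}^{c}+t)$ is a polynomial matrix in $t$. It then suffices to show that $det_{pq/2}(c,h)$, viewed as a polynomial in $h$ for this fixed $c$, has a \emph{simple} zero at $h = h_{pq}^{c}$; the lemma then forces $\dim K_{pq/2}(c,h_{pq}^{c}) = 1$, which is exactly the assertion that the null space of $V_{pq/2}(c,h)$ is one-dimensional.

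To establish the simple zero I would invoke the Kac determinant formula (theorem \ref{kac}) at level $n = pq/2$:
\begin{displaymath}
det_{pq/2}(c,h) = A_{pq/2} \prod_{\stackrel{0 < p'q'/2 \le pq/2}{p' \equiv q' \lbrack 2 \rbrack}} (h - h_{p'q'}^{c})^{d(pq/2 - p'q'/2)}.
\end{displaymath}
The factor indexed by $(p',q') = (p,q)$ carries exponent $d(0) = 1$, so it contributes precisely a simple zero at $h = h_{pq}^{c}$. Every remaining factor belongs to a curve $C_{p'q'}$ present at level $p'q'/2 \le pq/2$, and the hypothesis that $(c,h_{pq}^{c})$ lies on no intersection at level $pq/2$ is exactly the statement that $h_{p'q'}^{c} \ne h_{pq}^{c}$ for all such $(p',q') \ne (p,q)$. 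Hence those factors are nonzero at $h = h_{pq}^{c}$ and do not raise the order of vanishing, so $det_{pq/2}$ vanishes to order exactly one in $h$.

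The step I expect to require the most care is the bookkeeping of the product: one must confirm that the only index contributing a zero at the chosen point is $(p,q)$ itself, \emph{including} the pairs $(p',q') \ne (p,q)$ with $p'q'/2 = pq/2$, whose factors also carry exponent $d(0) = 1$ and would spoil the first-order vanishing if their curves happened to pass through $(c,h_{pq}^{c})$. This is precisely why the hypothesis is phrased as avoidance of all intersections at level $pq/2$, rather than only the lower-level ones. Once this is verified, transversality is automatic since we slice along the $h$-direction at fixed $c$, and the application of the preceding lemma is immediate.
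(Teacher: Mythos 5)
Your argument is correct and is exactly the route the paper intends: the corollary is stated immediately after the lemma on polynomial matrices whose determinant vanishes to first order, and the paper leaves implicit precisely the verification you supply, namely that at level $n=pq/2$ the Kac determinant factor $(h-h_{pq}^{c})$ carries exponent $d(0)=1$ while the no-intersection hypothesis kills all other factors' zeros. Your care about the same-level pairs $(p',q')\ne(p,q)$ with $p'q'=pq$ is the right point to check, and slicing in $h$ at fixed $c$ is a legitimate transversal to $C_{pq}$.
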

\begin{lemma} \label{biz} Let $(c,h) = F_{pqk}$, then,  $det_{(p'q' - pq)/2}(c, h+pq/2) \ne 0$. 
\end{lemma}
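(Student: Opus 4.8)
The plan is to expand $det_{(p'q'-pq)/2}(c,h+pq/2)$ through the Kac determinant formula (Theorem \ref{kac}) and to reduce its non-vanishing to an elementary statement about integer points on two lines. First I fix the coordinates of $F_{pqk}$: by Proposition \ref{first} it lies on $C_{pq}$ and on its first intersector $C_{p'q'}$ with $(p',q')=(q-1+k,\,p+1+k)$, reached at $m=p+q+k-1$, so $c=c_m$ and $h=h_{pq}^m$. The crucial preliminary is the shift identity $h+pq/2=h_{-p,q}^c$, which is exactly the relation $\gamma^m_{-pq}(0)=\gamma^m_{pq}(0)+pq/2$ established in the proof of Lemma \ref{lem}, where $h_{-p,q}^c$ denotes the value of $\frac{[(m+2)r-ms]^2-4}{8m(m+2)}$ at $(r,s)=(-p,q)$. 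Thus the quantity to control is $det_N(c,h_{-p,q}^c)$ with $N=(p'q'-pq)/2$.

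By Theorem \ref{kac}, $det_N(c,h_{-p,q}^c)=0$ precisely when $h_{-p,q}^c=h_{rs}^c$ for some integers $r,s\ge 1$ with $r\equiv s\,[2]$ and $rs\le p'q'-pq$. At the fixed value $m=p+q+k-1$ the equality $h_{-p,q}^c=h_{rs}^c$ is the quadratic condition $(m+2)r-ms=\pm[(m+2)p+mq]$; writing $f(x,y)=(m+2)x-my$, it reads $f(r,s)=f(-p,q)$ or $f(r,s)=f(p,-q)$, so the integer solutions fill the two lines $(-p,q)+\ZZZ\,v$ and $(p,-q)+\ZZZ\,v$, where $v=(m,m+2)/g$ generates $\ker f\cap\ZZZ^2$ and $g=\gcd(m,m+2)\in\{1,2\}$.

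The heart of the argument is to show that every solution with $r,s\ge 1$ and $r\equiv s\,[2]$ has $rs>p'q'-pq$. Since both entries of $v$ are positive, on each line the product $rs$ increases as one moves away from the axes, so I only need the smallest admissible point. The congruence $p\equiv q\,[2]$ forces the step parameter to the parity for which $r\equiv s\,[2]$ holds automatically; in the even case $g=2$ this rules out the single half-steps and reinstates the full step $(m,m+2)$, so a short check gives the first admissible points $(m-p,\,m+2+q)$ on the first line and $(m+p,\,m+2-q)$ on the second, independently of $g$. It then remains to compare their products with $p'q'-pq$. Substituting $m=p+q+k-1$ and expanding, I expect the two differences to collapse to $pq+2q(q+k-1)$ and $pq+2p(p+k+1)$ respectively, both manifestly positive for $p,q\ge 1$ and $k\ge\kappa\ge 0$; hence no admissible $(r,s)$ lies in the forbidden range and $det_N(c,h_{-p,q}^c)\ne 0$.

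The main obstacle is this final positivity bookkeeping: one must ensure that halving the step vector when $m$ is even never produces a smaller admissible solution, and that the two closed-form differences stay positive across the whole parameter range, in particular on the boundary configurations $k=\kappa$ and the split $q\le p$ versus $q\ge p+2$ that defines $\kappa$. All the conceptual weight, however, sits in the shift identity $h+pq/2=h_{-p,q}^c$ together with the first-intersector description of $F_{pqk}$ from Proposition \ref{first}; once these are in hand, the rest is a finite, transparent comparison of integer products.
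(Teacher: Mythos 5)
Your proposal is correct and follows essentially the same route as the paper: both invoke the Kac determinant formula to reduce the claim to showing that $h_{pq}^{m}+pq/2=h_{-p,q}^{m}$ never coincides with an $h_{rs}^{m}$ having $rs\le p'q'-pq$, both parametrize the solutions of that coincidence as integer translates along $(m,m+2)$ (the paper writes them as $(ms'-p,(m+2)s'+q)$, $s'\in\ZZZ^{\star}$, which are exactly your two lines), and both conclude by a product comparison that excludes every nonzero step. Your explicit positivity computations $pq+2q(q+k-1)$ and $pq+2p(p+k+1)$ and your parity discussion of the half-step when $\gcd(m,m+2)=2$ are correct and merely make explicit what the paper's terser inequality $((1+s')m-p)((1-s')(m+2)-q)\ge 0$ packages in one line.
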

\begin{proof} If this determinant were zero, then $(c, h+pq)$ would be on a vanishing curve $C_{uv}$ of level  $\le \1/2(p'q' - pq)$:   $h_{pq}^{m} + pq/2 = h_{uv}^{m}$ and $uv \le p'q' -pq$. \\
Then,  we find  $(u,v)$ or $(v,u) = (ms'-p, (m+2)s'+q)$, with $s'\in \ZZZ^{\star}$. \\  
So now, $uv \le p'q' -pq $ is equivalent to $((1+s')m -p)((1-s')(m+2)-q) \ge 0$, but $1 \le p < m $ and $1 \le q < m+2$, so, $s'=0$, contradiction.
\end{proof}
To read the followings proposition and its proof,  recall section \ref{singulars}. \\ It's strictly parallel that in \cite{4d} for the Virasoro algebra.
\begin{proposition}   \label{singe}
For $j=\kappa, ... ,k$ there is an open neighborhood $U_{p'q'j}$  of $F_{p+k-j,q+k-j,j} = F_{q'-1-j,p'+1-j,j}$ and a nowhere zero analytic function  $v_{j}(c,h)$ defined on $U_{p'q'j}$ with values in $V_{n'}(c,h)$, with $n'=p'q'/2$, such that:  \\ $\begin{array}{c}   \end{array}$  \hspace{3,5cm} $v_{j}(c,h) \in K_{n}(c,h)  \Leftrightarrow  (c,h) \in C_{p'q'}$
\end{proposition}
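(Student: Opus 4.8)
The plan is to realize $v_{j}$ as the \emph{universal singular vector} attached to $C_{p'q'}$ at its level of first appearance $n' = p'q'/2$, and then to use the transversality of the crossing at $F := F_{p+k-j,q+k-j,j}$ together with Lemma \ref{biz} to pin down its nullity locus near $F$. Write $(a,b) = (p+k-j,\,q+k-j)$, so that $F$ lies on $C_{ab}$ (which first appears at level $ab/2 < n'$) and on $C_{p'q'}$. Since the exponent of the factor $(h - h_{p'q'}^{c})$ in $det_{n'}$ is $d(n'-p'q'/2) = d(0) = 1$, the curve $C_{p'q'}$ appears in $det_{n'}$ to first order; by the corollary that off its intersections $C_{p'q'}$ carries a one-dimensional null space at level $n'$, the kernel $K_{n'}(c,h)$ is spanned there by a singular vector $s_{p'q'}(c,h)$.

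First I would construct $v_{j}$. The level-$n'$ singular vector of type $(p',q')$ is unique up to scalar wherever it exists, and can be written as a universal vector $v_{j}(c,h)\in V_{n'}(c,h)$ whose coefficients are polynomial in $(c,h)$ and whose leading word has coefficient $1$; thus $v_{j}$ is analytic and nowhere zero on the whole $(c,h)$-plane. By Remark \ref{shortdef}, $v_{j}$ is singular, i.e. $G_{1/2}v_{j}=G_{3/2}v_{j}=0$, exactly on the locus cut out by these two equations, which by the Kac determinant formula (Theorem \ref{kac}) is the zero set $C_{p'q'}$ of $(h-h_{p'q'}^{c})$. This gives at once the forward implication $(c,h)\in C_{p'q'}\Rightarrow v_{j}\in K_{n'}(c,h)$, since a vector annihilated by $\Vir_{1/2}^{+}$ lies in the radical $\ker M_{n'}=K_{n'}$ (Proposition \ref{cheval}).

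The real content is the converse near $F$, and this is where the crossing must be analysed. Shrink $U_{p'q'j}$ so that the only vanishing curves meeting it are $C_{p'q'}$ and $C_{ab}$. If $v_{j}\in K_{n'}(c,h)$, then $det_{n'}(c,h)=0$, so $(c,h)\in C_{p'q'}\cup C_{ab}$, and it remains to exclude $(c,h)\in C_{ab}\setminus C_{p'q'}$. On $C_{ab}$ near $F$ the factor $(h-h_{p'q'}^{c})$ is nonzero, so $det_{n'}$ vanishes there to order exactly $d(n'-ab/2)$; since $V^{s_{ab}}_{n'}\subseteq K_{n'}$ already has this dimension, the order-of-vanishing bound forces $K_{n'}=V^{s_{ab}}_{n'}$, the level-$n'$ part of the submodule generated by the singular vector $s_{ab}$. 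Now Lemma \ref{biz} says precisely that the internal form of this submodule is nondegenerate at $F$, i.e. $V^{s_{ab}}$ carries no further singular vector at level $n'$ there; hence $v_{j}(F)=s_{p'q'}(F)\notin V^{s_{ab}}_{n'}(F)$. By continuity of $v_{j}$ and of the analytic family $V^{s_{ab}}_{n'}$, this persists on a neighborhood, so $v_{j}\notin V^{s_{ab}}_{n'}=K_{n'}$ on $C_{ab}\setminus C_{p'q'}$ near $F$, a contradiction. Therefore $v_{j}\in K_{n'}(c,h)\Rightarrow(c,h)\in C_{p'q'}$, completing the equivalence.

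The main obstacle is exactly this converse at the crossing $F$, where $C_{p'q'}$ and $C_{ab}$ vanish simultaneously and $\dim K_{n'}$ jumps up. The delicate point is to certify that the one-dimensional $C_{p'q'}$-direction $s_{p'q'}(F)$ does not fall into the larger $C_{ab}$-kernel $V^{s_{ab}}_{n'}(F)$; this is supplied by the transversality of the intersection together with the nonvanishing determinant of Lemma \ref{biz}, which keeps the submodule $V^{s_{ab}}$ free of singular vectors at the relevant level. Everything else, namely the analyticity and nonvanishing of $v_{j}$ and the forward implication, is formal once the universal singular vector of $C_{p'q'}$ is in hand.
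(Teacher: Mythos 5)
Your converse step (on $C_{ab}\setminus C_{p'q'}$ near $F$, forcing $K_{n'}=V^{s_{ab}}_{n'}$ by the order-of-vanishing bound and then using Lemma \ref{biz} to keep the new null direction out of $V^{s_{ab}}_{n'}$) is essentially the mechanism of the paper's proof. The genuine gap is upstream, in the construction of $v_{j}$ itself: you posit a ``universal singular vector'' defined on the whole $(c,h)$-plane, polynomial in $(c,h)$, with leading word of coefficient $1$ (hence nowhere zero), and singular precisely on $C_{p'q'}$. None of this is proved, and it is not formal. Generically along $C_{p'q'}$ one can indeed normalize a spanning singular vector of the line $K_{n'}$, but whether that family stays well-defined, bounded and nonzero as one passes through the crossing point $F$ --- exactly where $\dim K_{n'}$ jumps to $d(n'-n'')+1$ --- is the delicate content of the proposition, so assuming it is close to circular. (The Virasoro analogue, the nonvanishing of the $L_{-1}^{pq}$-coefficient of the level-$pq$ singular vector, is a genuine theorem usually extracted from Feigin--Fuchs structure theory, which this paper is deliberately avoiding.) A further non sequitur: the Kac determinant formula locates the zeros of $\det M_{n'}$, not the locus where your particular vector is annihilated by $G_{1/2}$ and $G_{3/2}$; a priori $v_{j}$ could also become singular at points of other vanishing curves.

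The paper sidesteps the global object entirely by a local Gram-matrix analysis at $F$: transversal coordinates $(x,y)$ with the lower-level curve (your $C_{ab}$, the paper's $C_{p''q''}$) given by $x=0$ and $C_{p'q'}$ by $y=0$; an analytic extension $v''_{j}$ of the level-$n''$ kernel line; the subspace $V''=V^{v''_{j}}_{n'}$, along which all Gram blocks are divisible by $x$; nondegeneracy of $Q(0,0)$, identified up to a positive constant with $M_{(p'q'-p''q'')/2}(c,h+p''q''/2)$, via Lemma \ref{biz}; and finally $v_{j}$ is read off from the one-dimensional kernel of the complementary block $T(x,y)$, whose determinant vanishes to first order exactly on $y=0$. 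Note that this $v_{j}$ is not claimed to be a singular vector off $C_{p'q'}$ --- it is only a null vector on it. To repair your write-up you must either supply a proof that the universal singular vector exists and stays nonzero through the intersection points, or fall back on this local construction.
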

\begin{proof} 
Write $p'' = p+k-j$, $q'' = q+k-j$ and $n'' = p''q''/2 < n'$. \\ Let $U = U_{p'q'j}$ be a neighborhood  of $F_{p+k-j,q+k-j,j}$, small enough that it intersects no vanishing curves but $C_{p'q'}$ and $C_{p''q''}$ at level $n'$. Choose coordinates $(x,y)$ in $U$, real analytic in $(c,h)$, such that $C_{p''q''}$ is given by $x=0$ and $C_{p'q'}$ by $y=0$. This is possible because the intersection is transversal. At level $n''$,  $x=0$ is the only vanishing curve in $U$. $ K_{n''}(0,y)$ is  one dimensional  and form a line bundle over the vanishing curve $x=0$ near $y=0$. Let $v''_{j}(0,y)$ be a nowhere zero analytic section of this line bundle, and let $v''_{j}(x,y)$ be an analytic function on $U$ with values in $V_{n''}(x,y)$, which extends this section. Let $V''(x,y) = V^{v''_{j}}_{n'}(x,y)$ of dimension $d(n'-n'')$. For $y \ne 0$, the order of vanishing of $det_{n'}(x,y)$ at $x=0$ is also $d(n'-n'')$. Therefore, for $y \ne 0$, $V''(0,y) = K_{n'}(0,y)$. Let $V'(x,y)$ such that $V_{n'} = V'' \oplus V'$ and we write:
\begin{displaymath}  M_{n'}(x,y) = \left( \begin{array}{cc} xQ(x,y)  &  x R(x,y)  \\  xR(x,y)^{t}  &   S(x,y) \end{array} \right)  \end{displaymath} with $Q$, $S$  symmetric and $3$ blocks divisible by $x$ because  $V''(0,y) \subset  K_{n'}(0,y)$. \\

 The key point now, is that $Q(0,0)$ is non-degenerate. \\ To see this, first note that $v''_{j}(0,y)$ is singular, $M_{n'}(0,y)v''_{j}(0,y) = 0$ and $L_{0}v''_{j}(0,y) = (h+p''q''/2)v''_{j}(0,y)$; recall that $(0,y)=(c,h) \in C_{p''q''}$. \\ Now, since all is analytic, $\forall \alpha, \beta \in V''(x,y) $: \\
  \begin{displaymath} (\alpha, \beta) = (A.v''_{j}(x,y) , B.v''_{j}(x,y) ) = ([B^{\star},A]v'',v'') + (B^{\star}v'',A^{\star}v'') \end{displaymath} 
 \begin{displaymath}  = ([B^{\star},A]\widetilde{\Omega},\widetilde{\Omega}) (v'',v'') + o(x) ÊÊ= cte.x(A.\widetilde{\Omega} , B.\widetilde{\Omega} ) + o(x) , \end{displaymath}  with $\widetilde{\Omega}$ the cyclic vector of $V(c,h+p''q''/2)$;  so:  \begin{displaymath} Q(x,y) = M_{(p'q'-p''q'')/2}(c,h+p''q''/2) + x.M'(x,y).  \end{displaymath} Since $(0,0)=F_{p''q''j}$,  lemma \ref{biz} gives $det(Q(0,0)) \ne 0$;  so, $Q(x,y)$ is non-degenerate on all $U$ (we can replace $U$ by a small neighborhood of $(0,0)$). \\  Let $W = \left( \begin{array}{cc} 1  &  -Q^{-1}  \\  0  &   1 \end{array} \right)$ and make the change of basis: 
   \begin{center}$ M_{n'} \mapsto W^{t} M_{n'} W =  \left( \begin{array}{cc} xQ(x,y)  & 0  \\  0  &   T(x,y) \end{array} \right)$ \end{center}
   Let $V'''(x,y)$ be the new complement of $V''(x,y)$, on which $T(x,y)$ defined the inner product. The order of vanishing argument implies that $det(T(x,y))$ is non-zero for $y \ne 0$ and vanishes to first order at $y = 0$. The one dimensional null space of $T(x,0)$ is $K_{n'}(x,0)$ for $x \ne 0$. At $x=y=0$, the one dimensional null space of $T(0,0)$ and $V''(0,0)$, span the d(n'-n'')+1 dimensional $K_{n'}(0,0)$. By the same argument which gave $v''_{j}(x,y)$, we can choose a nowhere zero analytic function $v_{j}(x,y)$ on $U$, with values in $V'''(x,y)$ such that $v_{j}(x,0)$ is in the null space of $T(x,0)$ and therefore in $K_{n'}(x,0)$. Since $T(x,y)$ is non-degenerate for $y \ne 0$, $v_{j}(x,0)$ is not in $K_{n'}(x,y)$ if $y \ne 0$  \end{proof}
\begin{definition}
Let  $ J_{p'q'j}$, $\kappa < j \le k$, be the open interval on $C_{p'q'}$ between $F_{p+k-j,q+k-j,j}$ and $F_{p+k-j-1,q+k-j-1,j}$, and let $J_{p'q'\kappa}$ be the open interval on  $C_{p'q'}$ lying between $c=3/2$ and $F_{p+k-\kappa,q+k-\kappa,\kappa}$. 
\end{definition}
\begin{definition}
Let $ W_{p'q'j}$, $\kappa \le j \le k$ be a neighborhood of a point of $ J_{p'q'j}$, which intersects no other vanishing curves on level $n'$, such that: : \\ $ J_{p'q'j} \subset U_{p'q'j-1} \cup W_{p'q'j} \cup U_{p'q'j}$ if $j >  \kappa$, and $\varnothing \ne U_{p'q'\kappa} \cap W_{p'q'\kappa}  \subset R_{p'q'}$
\end{definition}

\begin{lemma}  \label{le}
For each $j$, $\kappa \le j \le k$, there is a nowhere zero analytic function $w_{j}(c,h)$ on $W_{p'q'j}$ with values in $V_{n'}(c,h)$, such that $w_{j}(c,h)$ is in $K_{n'}(c,h)$ if and only if $(c,h)$ is on $ J_{p'q'j}$, and:  \begin{displaymath} w_{j} = \left\{  \begin{array}{l}  f_{j}v_{j}  \ \textrm{on} \  W_{p'q'j} \cap U_{p'q'j}  \\     g_{j}v_{j-1}  \ \textrm{on} \  W_{p'q'j} \cap U_{p'q'j-1}   \  (j \ne \kappa)       \end{array} \right. \end{displaymath}  where $ f_{j}$, $g_{j}$ are nonzero function.
  \end{lemma}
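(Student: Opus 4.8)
The plan is to build $w_j$ on $W_{p'q'j}$ by the same device that produced $v_j$ in Proposition \ref{singe}, the simplification being that $W_{p'q'j}$ meets no vanishing curve at level $n'$ except $C_{p'q'}$. By the order-one vanishing lemma and its corollary, $det_{n'}(c,h)$ vanishes to first order along $C_{p'q'}\cap W_{p'q'j}$ and is nonzero elsewhere in $W_{p'q'j}$; hence $K_{n'}(c,h)$ is one-dimensional for $(c,h)\in C_{p'q'}\cap W_{p'q'j}$ and trivial off the curve. Since the spaces $V_{n'}(c,h)$ share one $(c,h)$-independent monomial basis, these kernels form an analytic line subbundle of the trivial bundle over the arc $C_{p'q'}\cap W_{p'q'j}$, and this subbundle is trivial because its base is a connected interval.

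First I would fix a nowhere-zero analytic section $\sigma$ of this kernel line bundle and normalise it so that $\sigma=v_j|_{C_{p'q'}}$ on the overlap $C_{p'q'}\cap W_{p'q'j}\cap U_{p'q'j}$, which is legitimate since $v_j|_{C_{p'q'}}$ is itself a nowhere-zero section of the same one-dimensional bundle there. Continuing $\sigma$ analytically along the interval, its restriction to the other overlap $C_{p'q'}\cap W_{p'q'j}\cap U_{p'q'j-1}$ is then necessarily a nowhere-zero analytic multiple $g_j^{0}\,v_{j-1}|_{C_{p'q'}}$, since any two nowhere-zero sections of a line bundle over a connected base are proportional by a nowhere-zero function. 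For $j=\kappa$ there is no left endpoint to match; instead $U_{p'q'\kappa}\cap W_{p'q'\kappa}$ lies in $R_{p'q'}$, the region in which the ghost to be transported already exists.

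Next I would spread $\sigma$ off the curve. On $W_{p'q'j}\cap U_{p'q'j}$ I set $w_j:=v_j$ (so $f_j=1$) and on $W_{p'q'j}\cap U_{p'q'j-1}$ I set $w_j:=g_j v_{j-1}$, with $g_j$ an off-curve extension of $g_j^{0}$; both are nowhere-zero analytic $V_{n'}$-valued functions lying in $K_{n'}$ exactly on $C_{p'q'}$ and both restrict to $\sigma$ on the curve. Across the middle of $W_{p'q'j}$ I extend $\sigma$ to any nowhere-zero analytic function that belongs to the kernel precisely on $C_{p'q'}$, exactly as the section $v''_j$ was extended in Proposition \ref{singe}. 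After shrinking $W_{p'q'j}$, $U_{p'q'j}$ and $U_{p'q'j-1}$ if necessary, patching these along the interval gives a single nowhere-zero analytic $w_j$ on $W_{p'q'j}$ with $w_j\in K_{n'}(c,h)$ iff $(c,h)\in C_{p'q'}$, i.e. iff $(c,h)\in J_{p'q'j}$, and with the asserted forms $w_j=f_j v_j$ and $w_j=g_j v_{j-1}$ on the two end overlaps.

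The main obstacle is this final patching, not the local constructions. The required identities $w_j=f_j v_j$ and $w_j=g_j v_{j-1}$ are proportionalities on two-dimensional overlaps, whereas two sections of $K_{n'}$ agreeing along $C_{p'q'}$ need not be proportional transverse to it; so one must impose the two forms on the (disjoint) end overlaps from the start and then produce a genuine analytic interpolation across the middle. That interpolation is available because $U_{p'q'j}$, $W_{p'q'j}$ and $U_{p'q'j-1}$ cover $J_{p'q'j}$, the two end overlaps are disjoint, and the transversality of the intersections at the endpoints lets the discrepancy between rival extensions be absorbed into the ideal generated by a local defining function of $C_{p'q'}$.
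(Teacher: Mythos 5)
Your construction of $w_j$ along the arc --- $K_{n'}(c,h)$ is one--dimensional on $J_{p'q'j}\cap W_{p'q'j}$ and trivial elsewhere in $W_{p'q'j}$ because $det_{n'}$ vanishes there to first order and $W_{p'q'j}$ meets no other vanishing curve at level $n'$, so the kernels form a trivial analytic line bundle over the arc, admit a nowhere zero section, and that section extends off the curve exactly as $v''_j$ was extended in Proposition \ref{singe} --- is precisely the content of the paper's one--line proof, which records only that $K_{n'}$ is trivial on $W_{p'q'j}$ except on $J_{p'q'j}$, where it is one--dimensional. Up to that point you and the paper coincide.

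The gap is in your final patching step, and it is a genuine one. An analytic function on the connected set $W_{p'q'j}$ is determined by its restriction to any nonempty open subset; once you decree $w_j=v_j$ on the open set $W_{p'q'j}\cap U_{p'q'j}$, the values of $w_j$ on $W_{p'q'j}\cap U_{p'q'j-1}$ are forced by analytic continuation, and no freedom remains to also impose $w_j=g_jv_{j-1}$ there. Modifying a candidate by an element of the ideal of $C_{p'q'}$, as you suggest, changes its values off the curve and so destroys the exact proportionality $w_j=f_jv_j$ on the first overlap that you were trying to keep; and the transversality you invoke lives at the endpoints of $J_{p'q'j}$, which do not even belong to $W_{p'q'j}$. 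In fact the full--overlap proportionality is not obtainable in general: it would force the analytic continuation of the line $\CCC v_j$ to agree with $\CCC v_{j-1}$ over $W_{p'q'j}\cap U_{p'q'j-1}$, whereas the $v_j$ are built independently on each $U_{p'q'j}$ (inside different complements $V'''$) with no such compatibility. The way out, which is what the paper's terse proof implicitly asserts, is that the identities $w_j=f_jv_j$ and $w_j=g_jv_{j-1}$ need only be read on the curve $J_{p'q'j}$ itself, where they are automatic since all three vectors span the same one--dimensional kernel $K_{n'}(c,h)$. That weaker statement is all Lemma \ref{amm} uses: both $(w_j,w_j)$ and $(v_j,v_j)$ vanish to exactly first order across $C_{p'q'}$, and on the curve their normal derivatives satisfy $\partial(w^{\ast}M_{n'}w)=w^{\ast}(\partial M_{n'})w=|f_j|^{2}\,v^{\ast}(\partial M_{n'})v$, so they change sign in the same direction and the ghost is transported without any off--curve proportionality. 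You should either weaken your claim to the on--curve statement and add this sign computation, or drop the interpolation step, which as written would fail.
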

\begin{proof}  
$K_{n'}(c,h)$ is trivial on  $W_{p'q'j}$, except on $J_{p'q'j}$, where $dim(K_{n'}) = 1$.
 \end{proof}
\begin{lemma}  \label{amm}
  $I_{pqk}$ is eliminated on level $n' = (q-1+k)(p+1+k)/2$.
 \end{lemma}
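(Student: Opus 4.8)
The plan is to manufacture a ghost at every point of $I_{pqk}$ by \emph{transporting} along the curve $C_{p'q'}$ a ghost that already exists elsewhere, where $(p',q')=(q-1+k,p+1+k)$ and $n'=p'q'/2$. By Proposition \ref{first}, $C_{p'q'}$ is precisely the first intersector of $C'_{pq}$ meeting it at the discrete series point $F_{pqk}$. First I would record the starting point of the transport: by Lemma \ref{elim} the level-$n'$ form is indefinite on the region $R_{p'q'}$ (there $\det_{n'}<0$, hence a negative-norm vector exists), and by the construction of $W_{p'q'\kappa}$ one has $\varnothing\neq U_{p'q'\kappa}\cap W_{p'q'\kappa}\subset R_{p'q'}$. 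So on that overlap the known ghost and the distinguished section $w_{\kappa}$ of Lemma \ref{le} coexist.

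Next I would run along the chain of neighbourhoods $W_{p'q'\kappa},U_{p'q'\kappa},W_{p'q'(\kappa+1)},\dots,U_{p'q'k}$ which, by the preceding description of the $k-\kappa+1$ successive intersections along $C_{p'q'}$, covers $C_{p'q'}$ from $c=3/2$ down to $F_{pqk}\in U_{p'q'k}$. The sections $v_j$ of Proposition \ref{singe} and $w_j$ of Lemma \ref{le} agree on consecutive overlaps up to the nowhere-vanishing factors $f_j,g_j$ (namely $w_j=f_jv_j$ and $w_j=g_jv_{j-1}$), so they splice into a single nowhere-zero section of the level-$n'$ space defined all along $C_{p'q'}$, which lies in $K_{n'}$ exactly on $C_{p'q'}$. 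The heart of the argument is to control the sign of the norm of this section as one moves transversally off $C_{p'q'}$. Near each intersection Proposition \ref{singe} block-diagonalises the form as $xQ\oplus T$, with $Q(0,0)=M_{(p'q'-p''q'')/2}(c,h+p''q''/2)$ non-degenerate, which is exactly where Lemma \ref{biz} is used. The extra null directions created by crossing the lower curve $C_{p''q''}$ sit in the $V''$ block and decouple, so the transported section stays in $V'''$, where $T$ has the single one-dimensional kernel belonging to $C_{p'q'}$. Hence the transversal norm changes sign cleanly across $C_{p'q'}$ and keeps a fixed sign relative to $C_{p'q'}$ through every intersection.

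Finally I would cash this in at the last intersection $F_{pqk}\in U_{p'q'k}$, where $C_{p'q'}$ meets $C_{pq}$ transversally. Since $I_{pqk}\subset C_{pq}$ lies off $C_{p'q'}$ (the two curves meet only at $F_{pqk}$), the transported section has nonzero norm at points of $I_{pqk}$ near $F_{pqk}$; the sign tracked from $R_{p'q'}$ shows this norm is negative, i.e.\ $I_{pqk}$ sits on the ghost side of $C_{p'q'}$. Thus $V(c,h)$ carries a ghost at every such point, and because the open interval $I_{pqk}$ meets no further vanishing curve relevant at level $n'$ beyond those already handled, this ghost persists along all of $I_{pqk}$ by continuity, which is the assertion of the lemma.

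The main obstacle is the sign bookkeeping of the middle paragraph: one must guarantee that the negative eigenvalue of the form is never absorbed into the kernel that grows each time $C_{p'q'}$ crosses a lower-level curve, and that the side of $C_{p'q'}$ carrying the ghost is matched correctly with the side on which $I_{pqk}$ actually lies. The transversality of all the intersections, which underlies the block decomposition of Proposition \ref{singe}, together with the non-vanishing determinant of Lemma \ref{biz}, is precisely what makes this bookkeeping consistent.
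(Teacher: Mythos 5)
Your transport scheme is the one the paper uses: splice the sections $v_j$ of Proposition \ref{singe} and $w_j$ of Lemma \ref{le} along $C_{p'q'}$ through the chain $W_{p'q'\kappa}, U_{p'q'\kappa}, W_{p'q'(\kappa+1)},\dots, U_{p'q'k}$, push a single sign down to $I_{pqk}\cap U_{p'q'k}$, and then extend along all of $I_{pqk}$ by continuity since no further vanishing curve of level $n'$ is met. Your middle and final paragraphs are essentially the paper's proof, including the role of Lemma \ref{biz} in keeping the $Q$-block non-degenerate at each intersection.

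The genuine gap is in your anchor. You start the transport from the fact that $\det_{n'}<0$ on $R_{p'q'}$ (Lemma \ref{elim}), which only tells you that \emph{some} vector of $V_{n'}$ has negative norm there; it does not tell you that the particular analytic section $w_{\kappa}$ --- the only object you can actually transport --- has negative norm on $R_{p'q'}\cap W_{p'q'\kappa}$. Saying that ``the known ghost and $w_{\kappa}$ coexist'' on the overlap establishes nothing about the sign of $(w_{\kappa},w_{\kappa})$, and your later phrase ``the sign tracked from $R_{p'q'}$'' presupposes exactly what is missing: an odd number of negative eigenvalues is compatible with the eigenvalue attached to the kernel direction having either sign unless you know the signature on the other side. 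The paper closes this by anchoring there: by Proposition \ref{propos} the form $M_{n'}$ is positive on $h\ge 0$, $c\ge 3/2$, and one can join that sector to $W_{p'q'\kappa}$ without crossing any level-$n'$ vanishing curve, so $(w_{\kappa},w_{\kappa})>0$ before crossing $C_{p'q'}$; since $(w_{\kappa},w_{\kappa})$ vanishes to first order on $J_{p'q'\kappa}$, it becomes negative after the crossing, i.e.\ on the $R_{p'q'}$ side. With that step restored, your argument coincides with the paper's.
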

\begin{proof}  
By proposition \ref{propos}, $M_{n'}(c,h)$ is positive on $h \ge 0$, $c \ge 3/2 $. \\ Now, at level $n'$, we can go from this sector to $W_{p'q'\kappa}$ without crossing a vanishing curve, so, $(w_{\kappa}, w_{\kappa}) > 0$ before crossing $C_{p'q'}$.  But it vanishes to first order on $C_{p'q'}$, so, after crossing it, $w_{\kappa}$ becomes a ghost.  Now, by lemma \ref{le} and induction, so is for $v_{\kappa}$, $w_{\kappa + 1}$, $v_{\kappa +1}$, ... up to $v_{k}(c,h) \in I_{pqk} \cap U_{p'q'k}$. \\ Finally, $v_{k}(c,h)$  continues to be a ghost on all $I_{pqk}$, because $I_{pqk}$  cross no other vanishing  curve on level $n'$.
 \end{proof}
Lemmas \ref{plane}, \ref{elim}, \ref{kra} and \ref{amm} imply theorem \ref{FQS} and  theorem \ref{satya}.  

\newpage
\section{Wassermann's argument}     \label{wasserone}
We need to recall sections \ref{chari} and  \ref{singulars}; by lemma \ref{discrete} the discrete series 
are the intersections of $C'_{pq}$ and $C_{p'q'}$ at $m = p+q+k-1$, $k \ge \kappa$, with
$(p',q') = (q-1+k ,  p+1+k) = (m-p , m+2-q)$, ie, $h_{pq}^{m} = h_{m-p, m+2-q}^{m}$.  \\ 
Let $M = max(pq/2,p'q'/2)$. This section will prove theorem \ref{sati}, thanks to an argument that A. Wassermann uses for the Virasoso case in \cite{1}.

\begin{lemma} \label{quartz} At level $\le M$, we find only two singular vectors $s$ and $s'$ \\ at level $pq/2$ and $p'q'/2$.
 \end{lemma}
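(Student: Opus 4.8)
The plan is to read the two singular vectors off the two vanishing curves that meet at the discrete-series point, and then to invoke Lemma \ref{biz} to forbid any further singular vector below level $M$. First I would determine which vanishing curves pass through $(c_m,h_{pq}^m)$. By Lemma \ref{discrete} this point is a first intersection $F_{pqk}$ of $C'_{pq}$ with $C_{p'q'}$, so $h_{pq}^m=h_{p'q'}^{c_m}$ and it lies on both $C_{pq}$ and $C_{p'q'}$. Solving $h_{rs}^{c_m}=h_{pq}^m$, that is $(m+2)r-ms=\pm[(m+2)p-mq]$, gives two one-parameter families of integer solutions, each obtained from $(p,q)$ respectively $(p',q')$ by the shift $(r,s)\mapsto(r+m,s+m+2)$; using the symmetry $h_{-r,-s}^{c}=h_{rs}^{c}$, the only distinct curves with $rs\le\max(pq,p'q')$ are $C_{pq}$ and $C_{p'q'}$. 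Hence, by the Kac determinant formula (Theorem \ref{kac}), $\det_n(c_m,h_{pq}^m)\neq0$ for $n<\min(pq/2,p'q'/2)$, and for $n\le M$ its only vanishings come from the factors carried by $C_{pq}$ and $C_{p'q'}$. Since the kernel $K_n$ is trivial below $\min(pq/2,p'q'/2)$, the minimal singular vector occurs exactly there, and membership in the two curves produces a singular vector $s$ at level $pq/2$ and $s'$ at level $p'q'/2$.

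Next I would exclude secondary singular vectors below level $M$; assume $p'q'\ge pq$, so $M=p'q'/2$. By Proposition \ref{cheval} any singular vector above level $pq/2$ other than $s'$ would be a descendant of $s$, hence correspond to a singular vector of the Verma module $V(c_m,h_{pq}^m+pq/2)$ built on the image of $s$. But Lemma \ref{biz} gives $\det_{(p'q'-pq)/2}(c_m,h_{pq}^m+pq/2)\neq0$, so that module has no singular vector at level $\le(p'q'-pq)/2$, and every such secondary vector must sit strictly above level $pq/2+(p'q'-pq)/2=p'q'/2=M$. The same estimate, with the roles of the two curves exchanged, rules out secondaries built on $s'$. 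Thus $s$ and $s'$ exhaust the singular vectors of level $\le M$.

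It remains to check that each level carries a single new singular vector. At level $pq/2$ the point meets only $C_{pq}$, since $C_{p'q'}$ first appears at the strictly higher level $p'q'/2$; so the order-one vanishing lemma and its corollary on $1$-dimensional null spaces make $K_{pq/2}(c_m,h_{pq}^m)$ one dimensional, giving the unique $s$. At level $p'q'/2$ the null space splits, exactly as in the transversal analysis of Proposition \ref{singe}, into the descendants of $s$ and a one-dimensional complement along $C_{p'q'}$, whose generator is the unique new vector $s'$ (in the degenerate case $pq/2=p'q'/2$ the two curves cross transversally at level $M$ and $K_M$ is two-dimensional, spanned by the two primary vectors). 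I expect the main obstacle to be precisely this last bookkeeping — separating the genuinely new $s'$ from the descendants of $s$ inside $K_{p'q'/2}$ — which is where the transversality of $C_{pq}$ and $C_{p'q'}$, together with the nondegeneracy furnished by Lemma \ref{biz}, are indispensable. Once the count is pinned at two, Lemma \ref{quartz} follows.
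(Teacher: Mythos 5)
Your proof is correct and follows essentially the same route as the paper: the paper simply cites the kernel structure $K_{n}(c_{m},h_{pq}^{m})$ established in the proof of Proposition \ref{singe} (which rests on exactly the ingredients you redeploy — Lemma \ref{biz}, transversality of the curves $C_{pq}$ and $C_{p'q'}$, and the order-of-vanishing count against the Kac determinant) and concludes by Proposition \ref{cheval}. Your explicit exclusion, via Lemma \ref{biz}, of secondary singular vectors sitting inside $V^{s}$ at levels $\le M$ is a point the paper leaves implicit in that citation.
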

\begin{proof} We can suppose $p'q' > pq$; by proof of proposition \ref{singe}: 
\begin{displaymath}
K_{n}(c_{m},h_{pq}^{m}) =  \left\{  \begin{array}{lcl}    \{ 0 \}  &  \textrm{if}   & n < pq/2  \\ \CCC s &  \textrm{if} & n = pq/2  \\ V_{n}^{s}(c_{m},h_{pq}^{m})  &   \textrm{if} & pq/2 \le n < p'q'/2 \\ V_{n}^{s}(c_{m},h_{pq}^{m}) \oplus \CCC s' &  \textrm{if} & n = p'q'/2   \end{array}   \right.
\end{displaymath}
Then, by proposition \ref{cheval}, the result follows.
\end{proof}
\begin{corollary}  \label{dog}
$ch(L(c_{m} , h_{pq}^{m})) \sim  \chi_{NS}(t) . t^{h_{pq}^{m} - c_{m}/24} (1 - t^{pq/2} - t^{p'q'/2})$
\end{corollary}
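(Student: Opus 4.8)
The plan is to recover the character of $L(c_m,h_{pq}^m)$ from the maximal proper submodule $K(c_m,h_{pq}^m)$, combining the inclusion--exclusion formula of Remark \ref{coro2} with the exhaustive list of low-level singular vectors furnished by Lemma \ref{quartz}. Recall from Remark \ref{coro2} that
\begin{displaymath} ch(L(c_m,h_{pq}^m)) = ch(V(c_m,h_{pq}^m)) - \sum_s ch(V^s) + \sum_{s,s'} ch(V^s \cap V^{s'}) - \dots , \end{displaymath}
the sums running over the singular vectors, which generate $K$ by Proposition \ref{cheval}. By Lemma \ref{lem22} the ambient character is $ch(V(c_m,h_{pq}^m)) = t^{h_{pq}^m - c_m/24}\chi_{NS}(t)$, and by the corollary following it the submodule generated by a singular vector of level $n$ has character $t^{n}\, t^{h_{pq}^m - c_m/24}\chi_{NS}(t)$.

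Next I would feed in Lemma \ref{quartz}: at every level $\le M = \max(pq/2,p'q'/2)$ the only singular vectors are $s$, at level $pq/2$, and $s'$, at level $p'q'/2$. Concretely, the proof of that lemma pins down $K_n$ for all $n \le M$, giving $K_n = V_n^s$ for $pq/2 \le n < p'q'/2$ and $K_{p'q'/2} = V_{p'q'/2}^s \oplus \CCC s'$; since $V^s \cap V^{s'}$ is generated only in level $\ge pq/2 + p'q'/2 > M$, the two submodules meet trivially throughout this range. Hence, modulo $t^{>M}$,
\begin{displaymath} ch(K(c_m,h_{pq}^m)) = ch(V^s) + ch(V^{s'}) = (t^{pq/2} + t^{p'q'/2})\, t^{h_{pq}^m - c_m/24}\chi_{NS}(t) . \end{displaymath}
Subtracting from $ch(V)$ and collecting terms yields exactly
\begin{displaymath} ch(L(c_m,h_{pq}^m)) \sim \chi_{NS}(t)\, t^{h_{pq}^m - c_m/24}(1 - t^{pq/2} - t^{p'q'/2}) , \end{displaymath}
which is the assertion.

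The step I expect to require the most care is the justification that every contribution beyond the two displayed ones is invisible to the relation ``$\sim$''. This is precisely where Lemma \ref{quartz} is indispensable: it guarantees that $K$ acquires no further singular generator below level $M$, so that the identities $ch(V^s) = t^{pq/2} ch(V)$ and $ch(V^{s'}) = t^{p'q'/2} ch(V)$ hold without dimension loss up to order $t^M$, and the cross term $V^s \cap V^{s'}$ together with all higher singular vectors contributes only at levels $> M$. Thus the alternating sum is controlled exactly up to order $t^M$, which is the range in which the claimed equivalence is asserted.
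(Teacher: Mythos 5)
Your proposal is correct and follows essentially the same route as the paper, whose proof is simply the citation ``By section \ref{singulars} and lemma \ref{quartz}'': you have just made explicit the inclusion--exclusion of Remark \ref{coro2} together with the character formulas of Lemma \ref{lem22} and its corollary, and used Lemma \ref{quartz} to certify that only the two singular vectors $s$, $s'$ contribute below level $M$. One small caveat: the assertion that $V^{s}\cap V^{s'}$ is ``generated only in level $\ge pq/2+p'q'/2$'' is not the right justification (an intersection of submodules need not be generated at the sum of the levels), but the correct reason is the one you also give, namely $K_{p'q'/2}=V^{s}_{p'q'/2}\oplus\CCC s'$, which forces the intersection to vanish through level $M$.
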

\begin{proof}
By  section \ref{singulars} and lemma \ref{quartz}.
\end{proof}
\begin{lemma} \label{chine}
$h_{pq}^{m} + M > m^{2}/8$
\end{lemma}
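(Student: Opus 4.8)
The plan is to reduce the statement to a clean algebraic identity and then exploit the symmetry $h_{pq}^{m} = h_{m-p,\,m+2-q}^{m}$ recalled at the start of this section. First I would record the elementary identity
\[ 4pq\,m(m+2) = [(m+2)p+mq]^{2} - [(m+2)p-mq]^{2}, \]
which, after dividing by $8m(m+2)$ and inserting the definition of $h_{pq}^{m}$, telescopes into
\[ h_{pq}^{m} + \frac{pq}{2} = \frac{[(m+2)p+mq]^{2} - 4}{8m(m+2)}. \]
Writing $b = (m+2)p + mq$, this reads $h_{pq}^{m} + pq/2 = (b^{2}-4)/(8m(m+2))$.

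Next, using $h_{pq}^{m} = h_{p'q'}^{m}$ with $(p',q') = (m-p,\,m+2-q)$, the same identity applied to the pair $(p',q')$ gives $h_{pq}^{m} + p'q'/2 = (b'^{2}-4)/(8m(m+2))$, where $b' = (m+2)p'+mq'$. A direct computation yields $b' = 2m(m+2) - b$, hence $b+b' = 2m(m+2)$. Since $b,b' > 0$, setting $B = \max(b,b')$ and using $M = \max(pq/2,\,p'q'/2)$ I get
\[ h_{pq}^{m} + M = \max\!\left( \frac{b^{2}-4}{8m(m+2)},\ \frac{b'^{2}-4}{8m(m+2)} \right) = \frac{B^{2} - 4}{8m(m+2)}. \]

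The desired inequality $h_{pq}^{m} + M > m^{2}/8$ is then equivalent to $B^{2} > m^{3}(m+2) + 4$. The key structural point is that, because $b+b' = 2m(m+2)$ is constant, the larger of the two is at least their average, so $B \ge m(m+2)$ and therefore $B^{2} \ge m^{2}(m+2)^{2}$. Finally $m^{2}(m+2)^{2} - m^{3}(m+2) = 2m^{2}(m+2)$, and $2m^{2}(m+2) > 4$ for every $m \ge 2$, which closes the estimate.

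I expect the only genuine content to be spotting the telescoping identity for $h_{pq}^{m} + pq/2$ together with the observation that $b + b'$ is independent of $p,q$; once $B \ge m(m+2)$ is available the remaining inequality is routine. The mild subtlety to watch is the justification that $h_{pq}^{m} + M$ equals the expression with $B = \max(b,b')$ without needing to decide in advance which of $pq,\,p'q'$ is larger: this is legitimate because $M$ selects the larger product, and since $(m+2)p'-mq' = -[(m+2)p-mq]$ the two products differ only through $b^{2}$ versus $b'^{2}$, so the maximum of $pq,\,p'q'$ corresponds exactly to the maximum of $b,\,b'$ by monotonicity of squaring on positive numbers.
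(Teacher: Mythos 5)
Your proof is correct and follows essentially the same route as the paper: both reduce the claim to the identity $h_{pq}^{m}+M=\bigl(\max(b,b')^{2}-4\bigr)/\bigl(8m(m+2)\bigr)$ with $b=(m+2)p+mq$ and $b'=2m(m+2)-b$ (the paper obtains this via $\gamma^{m}_{-p,q}(0)$ and $\gamma^{m}_{-p,q}(-1)$, you via the telescoping identity). Your concluding step, $\max(b,b')\ge m(m+2)$ by averaging, is a slightly cleaner substitute for the paper's two-case estimate through $(m+1)^{4}$, but the substance of the argument is the same.
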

\begin{proof}
$h_{pq}^{m} + M = max ( \gamma_{-p,q}^{m}(0) , \gamma_{-p,q}^{m}(-1))$. \\
 $\gamma_{-p,q}^{m}(0) = \frac{x^{2}-4}{8m(m+2)}$, $\gamma_{-p,q}^{m}(-1) = \frac{(x-2m(m+2))^{2}-4}{8m(m+2)}$, with  $x = (m+2)p+mq$. \\
If $\gamma_{-p,q}^{m}(0) > m^{2}/8$, it's ok. \\
Else, $ \frac{x^{2}-4}{8m(m+2)} \le  m^{2}/8   \Leftrightarrow   x^{2} \le m^{4} + 2m^{2} + 4 < (m+1)^{4}$ \\ 
So, $\gamma_{-p,q}^{m}(-1) = \frac{[2m(m+2)-x]^{2}-4}{8m(m+2)} >  \frac{[2m(m+2)-(m+1)^{2}]^{2}-4}{8m(m+2)} \ge \frac{m^{4} + 2m^{3}}{8m(m+2)} = m^{2}/8 $.
\end{proof}
\begin{theorem}
The multiplicity space $M_{pq}^{m}$ is exactly $L(c_{m},h_{p,q}^{m})$.
\end{theorem}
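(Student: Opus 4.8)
The plan is to show that $M_{pq}^{m}$ cannot contain any irreducible $\Vir_{1/2}$-submodule other than $L(c_{m},h_{pq}^{m})$, and then to conclude by matching characters. Since $M_{pq}^{m}$ is unitary by the coset construction of Section \ref{52} and of positive energy, it decomposes as an orthogonal direct sum of irreducible unitary positive energy representations; by the classification of Theorem \ref{satya} together with the FQS criterion (Theorem \ref{FQS}), each such irreducible is of the form $L(c_{m},h_{rs}^{m})$ for a discrete series value $h_{rs}^{m}$. We already know from Corollary \ref{coro} that $L(c_{m},h_{pq}^{m})$ occurs, as the submodule generated by the one-dimensional lowest $L_{0}$-eigenspace.

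First I would pin down the character of $L(c_{m},h_{pq}^{m})$ to sufficiently high order. By Lemma \ref{quartz}, up to level $M=\max(pq/2,p'q'/2)$ the Verma module $V(c_{m},h_{pq}^{m})$ has exactly two singular vectors, at levels $pq/2$ and $p'q'/2$; hence by the inclusion--exclusion of Remark \ref{coro2} one obtains (Corollary \ref{dog})
\[
ch(L(c_{m},h_{pq}^{m})) \sim \chi_{NS}(t)\,t^{h_{pq}^{m}-c_{m}/24}\bigl(1-t^{pq/2}-t^{p'q'/2}\bigr),
\]
which is exactly the leading expansion of $ch(M_{pq}^{m})$ furnished by Lemma \ref{lem}. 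Consequently the two characters agree through all powers $t^{\nu}$ with $\nu \le h_{pq}^{m}+M$, so that
\[
ch(M_{pq}^{m})-ch(L(c_{m},h_{pq}^{m})) = \chi_{NS}(t)\,t^{-c_{m}/24}\,o\bigl(t^{h_{pq}^{m}+M}\bigr).
\]

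Now suppose $M_{pq}^{m}$ had a further irreducible component. The left-hand difference is the character of the orthogonal complement of $L(c_{m},h_{pq}^{m})$, so (after factoring out the common $\chi_{NS}(t)\,t^{-c_{m}/24}$) its lowest surviving power of $t$ identifies the smallest lowest weight $h_{rs}^{m}$ occurring there; the displayed estimate forces $h_{rs}^{m} > h_{pq}^{m}+M$. Combining this with Lemma \ref{chine}, which gives $h_{pq}^{m}+M>m^{2}/8$, yields $h_{rs}^{m}>m^{2}/8$. The closing step is a numerical obstruction: a short optimization over the admissible parameters $1\le r\le m-1$, $1\le s\le m+1$, $r\equiv s\,[2]$ shows that the discrete-series values satisfy $h_{rs}^{m}\le m(m-2)/8$, the maximum being attained at $(r,s)=(m-1,1)$. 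This produces the contradiction
\[
\frac{m^{2}}{8} < h_{pq}^{m}+M < h_{rs}^{m} \le \frac{m(m-2)}{8} < \frac{m^{2}}{8}.
\]
Hence no further component exists, whence $M_{pq}^{m}=L(c_{m},h_{pq}^{m})$. In particular $ch(L(c_{m},h_{pq}^{m}))=ch(M_{pq}^{m})$ is the GKO character already computed in Corollary \ref{space}, which proves Theorem \ref{sati}.

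The main obstacle I expect is the bookkeeping in the character comparison: one must be certain that Lemma \ref{quartz} genuinely controls $ch(L(c_{m},h_{pq}^{m}))$ through order $M$, i.e.\ that no singular vector below level $M$ has been overlooked and that the inclusion--exclusion does not disturb the coefficients up to that order, since the entire argument hinges on reading off $h_{rs}^{m}$ from the first nonvanishing term of the difference. The bound $h_{rs}^{m}\le m(m-2)/8$ is elementary once set up, but it is the linchpin that closes the inequality and rules out the spurious component.
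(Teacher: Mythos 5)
Your proposal is correct and follows essentially the same route as the paper: it invokes Corollary \ref{coro} and Theorem \ref{FQS} to restrict any extra component to the discrete series $L(c_{m},h_{rs}^{m})$, uses Lemma \ref{quartz}, Corollary \ref{dog} and Lemma \ref{lem} to show the characters of $M_{pq}^{m}$ and $L(c_{m},h_{pq}^{m})$ agree through order $h_{pq}^{m}+M$, and closes with the same numerical contradiction $\frac{m^{2}}{8}<h_{pq}^{m}+M<h_{rs}^{m}\le\frac{m(m-2)}{8}$ from Lemma \ref{chine} and the bound attained at $(r,s)=(m-1,1)$. No substantive differences from the paper's argument.
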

\begin{proof}
By corollary \ref{coro}, $L(c_{m},h_{p,q}^{m})$ is a $ \Vir_{1/2}$-submodule of $M_{pq}^{m} $; 
if $M_{pq}^{m}$ admits another irreducible submodule (of central charge $c_{m}$), then, by theorem \ref{FQS}, it is on the discrete series, of the form $L(c_{m},h_{rs}^{m})$. 
Now, by lemma \ref{lem} and corollary \ref{dog}: 
 $ch(M_{pq}^{m})-ch(L(c_{m} , h_{pq}^{m})) =   \chi_{NS}(t) . t^{- c_{m}/24} o( t^{h_{pq}^{m} + M})$.
So we need $h_{rs}^{m} > M+h_{pq}^{m}$;
but, $h_{rs}^{m} = \frac{[(m+2)r-ms]^{2}-4}{8m(m+2)} \le \frac{(m^{2} - 2)^{2}-4}{8m(m+2)} = \frac{ m(m-2)}{8 }$. 
So, by lemma  \ref{chine}, $\frac{ m^{2}}{8 } < M+h_{pq}^{m} < h_{rs}^{m} \le  \frac{ m(m-2)}{8 }$, contradiction.
\end{proof}

\begin{theorem} The characters of the discrete series are: 
\begin{displaymath} ch(L( c_{m} , h_{pq}^{m}))(t) =\chi_{NS}(t).\Gamma^{m}_{pq}(t).t^{-c_{m}/24} \ \  \textrm{with}    \end{displaymath}
\begin{displaymath}\chi_{NS}(t) = \prod_{n \in \NNN^{\star}}\frac{1+t^{n-1/2}}{1-t^{n}},   \ \ \  \Gamma^{m}_{pq}(t) = \sum_{n \in \ZZZ}(t^{\gamma^{m}_{pq}(n)}-t^{\gamma^{m}_{-pq}(n)} )\ \  \textrm{and} \end{displaymath}
\begin{displaymath} \gamma^{m}_{pq}(n) =  \frac{[2m(m+2)n-(m+2)p+mq]^{2}-4}{8m(m+2)} \end{displaymath}   \end{theorem}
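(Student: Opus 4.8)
The plan is to recognize that this character formula is now an immediate corollary of the two results established just above it, so the proof is a short assembly rather than a new computation. The two inputs are Corollary \ref{space}, which computes the character of the multiplicity space $M_{pq}^{m}$ directly from the GKO coset construction, and the theorem immediately preceding this one, which identifies $M_{pq}^{m}$ with the irreducible module $L(c_{m},h_{pq}^{m})$.

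First I would recall Corollary \ref{space}, which gives
\begin{displaymath}
ch(M_{pq}^{m})(t) = t^{-c_{m}/24}\,\chi_{NS}(t)\,\Gamma^{m}_{pq}(t),
\end{displaymath}
with $\chi_{NS}$, $\Gamma^{m}_{pq}$ and $\gamma^{m}_{pq}$ exactly as in the statement to be proved. This expression was obtained upstream from the tensor product decomposition (Corollary \ref{mult}), the product formula for theta functions (Proposition \ref{prod}), and the shift $\gamma_{pq}^{m}(n) = \alpha_{pq}^{m}(n) - \frac{1}{16} + \frac{c_{m}}{24}$, so it requires no additional input here.

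Then I would invoke the preceding theorem, which asserts $M_{pq}^{m} = L(c_{m},h_{pq}^{m})$ as $\Vir_{1/2}$-modules. Since the graded character is an isomorphism invariant, the two modules have the same character, and substituting the formula from Corollary \ref{space} yields the claimed identity. I expect essentially no obstacle at this final step: the genuine work has all been carried out earlier, namely in proving the identification $M_{pq}^{m} = L(c_{m},h_{pq}^{m})$ via the FQS criterion (Theorem \ref{FQS}) together with Wassermann's coherence argument (Lemmas \ref{quartz} and \ref{chine}), which rules out any irreducible constituent of $M_{pq}^{m}$ other than $L(c_{m},h_{pq}^{m})$. The payoff of that argument is precisely that the character of $L(c_{m},h_{pq}^{m})$ is read off from the already-known character of the multiplicity space, bypassing the Feigin-Fuchs resolution.
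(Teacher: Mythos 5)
Your proposal is correct and follows exactly the paper's own proof: the paper likewise deduces $ch(L(c_{m},h_{pq}^{m}))=ch(M_{pq}^{m})$ from the preceding theorem identifying $M_{pq}^{m}$ with $L(c_{m},h_{pq}^{m})$, and then reads off the formula from Corollary \ref{space}. Nothing further is needed.
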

\begin{proof}
$ ch(L(c_{m} , h_{pq}^{m})) =ch(M_{pq}^{m})$, the result follows by corollary \ref{space}.
\end{proof}
\begin{remark} (Tensor product decomposition) 
\begin{displaymath}  \F_{NS}^{\gg}  \otimes L(j,\ell)= \bigoplus_{\stackrel{1  \le q \le m+1}{p \equiv q \lbrack 2 \rbrack} } L(c_{m} , h_{pq}^{m}) \otimes  L(k,\ell + 2)  \end{displaymath}
with $p=2j+1$, $q = 2k+1$,  $m=\ell + 2$ and $\gg = \sl_{2}$.   \end{remark}
We then recover a result due to Frenkel in \cite{ff}: 
\begin{corollary} \label{realferm} $ \F_{NS}^{\gg} = L(0,2)  \oplus L(1,2) $  as $L\gg$-module.   \end{corollary}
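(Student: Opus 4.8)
The plan is to obtain the decomposition as the degenerate specialization $j=\ell=0$ of the tensor product decomposition stated just above. With $j=0$, $\ell=0$ we have $p=2j+1=1$ and $m=\ell+2=2$, so the index set $\{\,1\le q\le m+1,\ p\equiv q\,[2]\,\}$ collapses to the odd values $q\in\{1,3\}$; via $q=2k+1$ these correspond to $k\in\{0,1\}$, i.e.\ to the level-$2$ modules $L(0,2)$ and $L(1,2)$. Hence, as $L\gg$-modules,
\[
\F_{NS}^{\gg}\otimes L(0,0)=\bigl(L(c_{2},h_{11}^{2})\otimes L(0,2)\bigr)\oplus\bigl(L(c_{2},h_{13}^{2})\otimes L(1,2)\bigr).
\]

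Next I would evaluate the two multiplicity spaces. From $c_{m}=\frac{3}{2}(1-\frac{8}{m(m+2)})$ one gets $c_{2}=\frac{3}{2}(1-\frac{8}{8})=0$, and from $h_{pq}^{m}=\frac{[(m+2)p-mq]^{2}-4}{8m(m+2)}$ one gets $h_{11}^{2}=\frac{2^{2}-4}{64}=0$ and $h_{13}^{2}=\frac{(-2)^{2}-4}{64}=0$. Thus both multiplicity spaces equal the Neveu-Schwarz module $L(0,0)$ at $c=h=0$; but there the Gram matrices of the Examples vanish ($M_{1/2}=(2h)=(0)$ and $M_{1}=(2h)=(0)$), so $G_{-1/2}\Omega$ and $L_{-1}\Omega$ are null and, inductively, every vector above $\Omega$ is null, whence $L(0,0)=\CCC\Omega$ is the trivial one-dimensional $\Vir_{1/2}$-module. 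On the loop side, Theorem \ref{ch1} with $j=\ell=0$ gives $ch(L(0,0))=\frac{\theta_{1,2}-\theta_{-1,2}}{\theta_{1,2}-\theta_{-1,2}}=1$, so $L(0,0)=\CCC$ as an $L\gg$-module too. Substituting $L(c_{2},h_{1q}^{2})=\CCC$ and using $\F_{NS}^{\gg}\otimes\CCC=\F_{NS}^{\gg}$ gives $\F_{NS}^{\gg}=L(0,2)\oplus L(1,2)$.

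The only non-mechanical point, and the main obstacle, is to make sure the degenerate input $\ell=0$ is legitimately covered by the preceding development. The identification $M_{pq}^{m}=L(c_{m},h_{pq}^{m})$ is harmless here, since $m=2$ with $(p,q)\in\{(1,1),(1,3)\}$ lies in the stated discrete-series range; the delicate case is rather that $L(0,0)$ is the level-$0$ loop module. This is settled by noting that Corollary \ref{mult} only requires $\F_{NS}^{\gg}\otimes L(0,0)=\F_{NS}^{\gg}$ to be a unitary positive-energy level-$2$ representation of $L\gg$, which it is, so it remains completely reducible into the integrable modules $L(j,2)$, $j\in\{0,\frac12,1\}$; and Corollary \ref{cor} specializes formally to $ch(\F_{NS}^{\gg})=F_{11}^{2}\,ch(L(0,2))+F_{13}^{2}\,ch(L(1,2))$ with $F_{1q}^{2}=ch(M_{1q}^{2})=1$. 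One could instead verify $ch(\F_{NS}^{\gg})=ch(L(0,2))+ch(L(1,2))$ directly from Proposition \ref{ch2} and Theorem \ref{ch1}, but this amounts to a classical Jacobi-type theta identity and is no shorter; the module-theoretic route above is the natural one and recovers Frenkel's result.
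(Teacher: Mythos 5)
Your proposal is correct and follows essentially the same route as the paper, whose entire proof is the one-line observation that one takes $j=\ell=0$ and checks $c_{2}=h_{11}^{2}=h_{13}^{2}=0$. The extra details you supply (the triviality of $L(0,0)$ on both the Neveu--Schwarz and loop sides, and the legitimacy of the degenerate case $\ell=0$ in the tensor product decomposition) are sound elaborations of the same argument rather than a different approach.
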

\begin{proof} It suffices to take $j = \ell = 0$, and to see that $c_{2} = h_{11}^{2} = h_{13}^{2} = 0$.   \end{proof}

\begin{corollary}(Duality)Let $H$ be an irreducible     positive energy represemtation of the loop superalgebra $\widehat{\gg}\oplus \widehat{\gg} $, let $A $ be the operator algebra generated by the  modes of the coset operators $L_{n}$ and $G_{r}$, let $B$ be the operator algebra generated by the  modes of the  diagonal loop superalgebra $ \widehat{\gg}$. Then, $A $ and $B $ are each other algebraic graded commutant (see \cite{1}).  \end{corollary}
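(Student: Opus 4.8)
The plan is to deduce the statement from the isotypic decomposition of $H$ under the commuting pair $(A,B)$ together with the irreducibility results already established, by a graded double commutant argument in the spirit of \cite{1}. Under the diagonal inclusion $\widehat{\gg}\subset\widehat{\gg}\oplus\widehat{\gg}$ the algebra $B$ is generated by the modes of the diagonal loop superalgebra, while $A$ is generated by the coset modes $L_{n},G_{r}$. By complete reducibility under the diagonal $\widehat{\gg}$ (as in corollary \ref{mult}), $H$ acquires a decomposition $H=\bigoplus_{i}M_{i}\otimes H_{i}$ in which $B$ acts as $\sum I\otimes\pi_{H_{i}}$ on irreducible $\widehat{\gg}$-modules $H_{i}$, and $A$ acts as $\sum\pi_{M_{i}}\otimes I$ on the multiplicity spaces $M_{i}$. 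By the theorem identifying $M_{pq}^{m}$ with $L(c_{m},h_{pq}^{m})$, each $M_{i}$ is an irreducible $\Vir_{1/2}$-module, so both tensor factors of every block are irreducible for their respective algebra.

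First I would record that $A$ and $B$ graded-commute: this is exactly the content of the coset construction, where $\sigma(D)=\pi(D)-\sum I\otimes\pi_{i}(D)$ is built to supercommute with $\widehat{\gg}$ (proposition \ref{cocycle}, using proposition \ref{susybf}). Next I would apply the graded form of Schur's lemma to the block structure: since the $M_{i}$ are pairwise inequivalent irreducible $A$-modules, any operator graded-commuting with $A$ is block diagonal of the form $\sum I\otimes T_{i}$; dually, inequivalence of the $H_{i}$ forces any operator graded-commuting with $B$ to be of the form $\sum S_{i}\otimes I$. The needed inequivalence on both sides comes from the branching of corollary \ref{mult}: the index $q=2k+1$ separates the $H_{i}=L(k,\ell+2)$, while the lowest weights $h_{pq}^{m}$ separate the $M_{i}$, so the decomposition is multiplicity free and the central block projections onto $M_{i}\otimes H_{i}$ lie in both generated algebras.

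It then remains to see that, on each block, $A$ exhausts the graded commutant of $B$ and conversely. Here I would invoke the graded Jacobson density theorem: since $B$ acts irreducibly on $H_{i}$ with graded commutant $\CCC$, the centralizer of $B$ on the block $M_{i}\otimes H_{i}$ is exactly $\mathrm{End}(M_{i})\otimes\CCC$, and $A$ acts densely therein because $M_{i}$ is $A$-irreducible; the symmetric statement recovers $B$ from $A$. Reassembling the blocks by means of the central projections already shown to lie in both algebras yields $A^{\prime}=B$ and $B^{\prime}=A$ as graded commutants, which is the asserted duality.

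The main obstacle is twofold. The \emph{algebraic} (rather than purely spatial) nature of the commutant requires passing from a block-diagonal graded-commuting operator to an honest element of the generated algebra; this rests on the density of the modes of $L_{n},G_{r}$ in $\mathrm{End}(M_{i})$ and of the diagonal $\widehat{\gg}$ in $\mathrm{End}(H_{i})$, which is precisely the density input that irreducibility supplies, and it is the step where the $\ZZZ_{2}$-grading signs must be tracked carefully so that both Schur's lemma and the centralizer computation are carried out with the correct graded convention. The second, more structural, point is the multiplicity-freeness of the decomposition, guaranteeing that distinct blocks are simultaneously inequivalent for $A$ and for $B$; once this is secured by the character data of corollary \ref{mult}, the block projections are canonical and the duality follows exactly as in the Virasoro case treated in \cite{1}.
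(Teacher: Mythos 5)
The paper offers no proof of this corollary at all: it is stated with a bare pointer to \cite{1}, so there is no internal argument to compare yours against. Your architecture --- decompose $H=\bigoplus_{i}M_{i}\otimes H_{i}$ under the diagonal $\widehat{\gg}$, use the identification $M_{pq}^{m}=L(c_{m},h_{pq}^{m})$ to make each block irreducible for $A$ and for $B$, then run a graded Schur plus Jacobson-density double-commutant argument on the finite-dimensional energy subspaces --- is the standard route and is what the cited reference does for the Virasoro coset, so the overall plan is the right one.

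There is, however, a concrete gap at the step you describe as securing multiplicity-freeness. You assert that the lowest weights $h_{pq}^{m}$ separate the multiplicity spaces (with $p=2j+1$ fixed and $q$ varying), but this is false in general: one has $h_{pq}^{m}=h_{pq'}^{m}$ exactly when $q+q'=2p+\frac{4p}{m}$, and this occurs within the allowed range. The paper's own corollary \ref{realferm} rests on the first instance, $c_{2}=h_{11}^{2}=h_{13}^{2}=0$; another is $h_{22}^{4}=h_{24}^{4}=\frac{1}{16}$ at $c_{4}=1$, which occurs in the decomposition of $\F_{NS}^{\gg}\otimes(L(1/2,2)\otimes\F_{NS}^{\gg})$, where the two multiplicity spaces are both $L(1,\frac{1}{16})$ and are not one-dimensional. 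When two blocks carry isomorphic multiplicity spaces $M_{i}\cong M_{j}$, every element of $A$ acts on $M_{i}\oplus M_{j}$ through the twisted diagonal $\{(T,\phi T\phi^{-1})\}$, whereas the graded commutant of $B$ still contains both central block projections (the $H_{i}$ remain inequivalent), so $A\subsetneq B'$, and symmetrically $B\subsetneq A'$. Your proof therefore does not close in these cases, and they appear to be counterexamples to the statement as literally formulated; at minimum the corollary's scope must be restricted, or an additional argument must be supplied showing why such coincidences are excluded. The other points you raise --- that the isotypic projections must be shown to lie in the generated algebras themselves rather than merely in the bicommutant, and that ``algebraic graded commutant'' has to be interpreted in the algebra of finite-degree operators so that density on finite-dimensional $L_{0}$-eigenspaces yields actual surjectivity --- are genuine but routine; the inequivalence of the $M_{i}$ is the substantive obstruction.
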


\begin{definition} (Vertex algebra supercommutant or centralizer algebra ) \\ Let $V$ be a vertex superalgebra and $W$ a vertex sub-superalgebra, then, the vertex algebra supercommutant of $W$ is the vertex superalgebra corresponding to the vectors $v \in V$ such that the modes of the corresponding field supercommute with the modes of fields for vectors of $W$ (see \cite{6}).   \end{definition}

\begin{corollary} (Vertex superalgebra duality) In the vertex superalgebra generated by $\widehat{\gg}\oplus \widehat{\gg} $, the vertex superalgebras generated by the Neveu-Schwarz coset and the diagonal loop superalgebra, are each others supercommutants.      \end{corollary}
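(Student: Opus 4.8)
The plan is to reduce this statement to the operator-algebraic Duality corollary just established, passing through the state--field correspondence on a vacuum module. Let $\V$ denote the vertex superalgebra generated by $\widehat{\gg} \oplus \widehat{\gg}$, realised on its vacuum module $H$; inside $\V$ sit two vertex sub-superalgebras, $\V_{\mathrm{diag}}$ generated by the diagonal loop superalgebra $\widehat{\gg}$, and $\V_{\mathrm{cos}}$ generated by the Neveu--Schwarz coset fields, whose modes are $L^{\widehat{\gg}\oplus\widehat{\gg}}_{n} - L^{\widehat{\gg}}_{n}$ and $G^{\widehat{\gg}\oplus\widehat{\gg}}_{r} - G^{\widehat{\gg}}_{r}$. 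The goal is to prove that the supercommutant of $\V_{\mathrm{diag}}$ in $\V$ is exactly $\V_{\mathrm{cos}}$, and symmetrically.

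First I would dispatch the easy inclusion, that the modes of $\V_{\mathrm{cos}}$ supercommute with those of $\V_{\mathrm{diag}}$ and conversely. By construction of the coset (proposition \ref{susybf} and the discussion around corollary \ref{ballon}), the coset generators supercommute mode by mode with the diagonal loop superalgebra. To promote this from the generating fields to the entire vertex sub-superalgebras, I would invoke Dong's lemma: mutual locality is preserved under the normally ordered products and operator-product expansions that build a vertex superalgebra, so supercommutation of the generating modes propagates to supercommutation of the modes of \emph{every} field of $\V_{\mathrm{cos}}$ with those of every field of $\V_{\mathrm{diag}}$.

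For the reverse, harder inclusion --- that neither supercommutant is strictly larger --- I would translate into operators and apply the Duality corollary. Decompose $H = \bigoplus_i M_i \otimes H_i$ as in corollary \ref{mult56}; by the theorem $M_{pq}^{m} = L(c_m, h_{pq}^{m})$ each multiplicity space is an \emph{irreducible} Neveu--Schwarz module and each $H_i$ an irreducible diagonal $\widehat{\gg}$-module. Let $A$ and $B$ be the operator algebras generated by the modes of the coset and of the diagonal fields; these are precisely the images of $\V_{\mathrm{cos}}$ and $\V_{\mathrm{diag}}$ under the state--field correspondence. The Duality corollary says $A$ and $B$ are each other's graded commutant. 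Hence a state $v \in \V$ whose field modes supercommute with all of $\V_{\mathrm{diag}}$ produces an operator in the graded commutant of $B$, namely in $A$; pulling back through the state--field correspondence, which is injective on the vacuum module, places $v$ in $\V_{\mathrm{cos}}$. The symmetric argument yields the other direction, completing the double supercommutant.

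The main obstacle is the faithful dictionary between the vertex-algebraic supercommutant and the operator-algebraic graded commutant. One must verify that, on the vacuum module, a vector lies in $\V_{\mathrm{cos}}$ (resp. $\V_{\mathrm{diag}}$) exactly when its field's modes generate an element of $A$ (resp. $B$), so that the vertex statement and the operator Duality corollary are genuinely equivalent rather than merely analogous. This identification rests on two facts already secured: the irreducibility $M_{pq}^{m} = L(c_m, h_{pq}^{m})$, which is what forbids the commutant from growing, and the completeness of the decomposition $H = \bigoplus_i M_i \otimes H_i$, which guarantees that the two generated algebras jointly exhaust the relevant (graded) endomorphisms.
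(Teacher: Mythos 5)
Your argument is correct and is essentially the one the paper intends: the paper states this corollary without any written proof, presenting it as an immediate consequence of the preceding operator-algebraic Duality corollary together with the definition of the vertex supercommutant, and your reduction --- easy inclusion via Dong's lemma, hard inclusion by passing through the state--field correspondence to the graded commutant statement $A' = B$, $B' = A$ on the decomposition $H = \bigoplus_i M_i \otimes H_i$ with $M_{pq}^{m} = L(c_m, h_{pq}^{m})$ irreducible --- is exactly that intended argument with the details supplied.
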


 \newpage

  \end{document}